\documentclass[11pt,oneside,reqno]{amsart}
\usepackage{bm,cancel,ulem}
\usepackage{amssymb,amsfonts}
\usepackage{amsmath,latexsym,enumerate}
\usepackage{mathrsfs}
\usepackage[unicode,breaklinks=true,colorlinks=true]{hyperref}
\usepackage[svgnames]{xcolor}

\usepackage[top=1in, bottom=1in, left=1.25in, right=1.25in, marginparwidth=1.1in, marginparsep=0.05in]{geometry}

\newtheorem{thm}{Theorem}[section]

\newtheorem{lem}[thm]{Lemma}

\theoremstyle{definition}
\newtheorem{defn}[thm]{Definition}
\newtheorem{rem}[thm]{Remark}
\numberwithin{equation}{section}
\newcommand{\eqdefa}{\overset{\mathrm{def}}{=\joinrel\!\!=}}

\newcommand{\R}{\mathbb R}
\newcommand{\Z}{\mathbb Z}

\newcommand{\cT}{\mathcal{T}}
\newcommand{\cF}{\mathcal{F}}

\newcommand{\dy}{\,{\rm d}y}
\newcommand{\dx}{\,{\rm d}x}

\newcommand{\dt}{\,{\rm d}t}
\newcommand{\ds}{\,{\rm d}s}

\newcommand{\dtau}{\,{\rm d}\tau}

\newcommand{\dxi}{\,{\rm d}\xi}
\newcommand{\andf}{\quad\hbox{and}\quad}

\newcommand{\du}{\,{\rm d}u}

\usepackage{accents}
\newcommand*{\dotc}[1] {\accentset{\mbox{\large\bfseries .}}{#1}}

\usepackage{marginnote}

\newcommand{\lec}{\lesssim}

\newcommand{\norm}[1]{\left\|#1\right\|}

\newcommand{\pd}{\partial}

\newcommand{\ep}{\varepsilon}

\renewcommand{\lec}{\lesssim}
\newcommand{\supp}{\operatorname{supp}}

\begin{document}
\title[Maximal regularity and caloric trace estimates]{Maximal regularity and caloric trace estimates in mixed Lebesgue norms for the heat equation}%
\author[H. Chen]{Hui Chen}%
\address[H. Chen]
{School of Science, Zhejiang University of Science and Technology, Hangzhou, 310023, China }
\email{chenhui@zust.edu.cn}
\author[S. Liang]{Su Liang}
\address[S. Liang]
{Department of Mathematics, University of British Columbia, Vancouver, BC V6T1Z2, Canada }
\email{liangsu96@math.ubc.ca}
\author[T.-P. Tsai]{Tai-Peng Tsai}
\address[T.-P. Tsai]
{Department of Mathematics, University of British Columbia, Vancouver, BC V6T1Z2, Canada }
\email{ttsai@math.ubc.ca}

\date{}

\begin{abstract}
We study the heat equation in the half-space with nonhomogeneous Dirichlet boundary data. For the caloric extension $v$ of the boundary data $g$, we prove maximal regularity estimates in mixed Lebesgue norms $L^p_tL^q_x$  for any order derivative of $v$ in terms of mixed Besov and Lizorkin--Triebel type norms of $g$. We also establish the corresponding
reverse inequalities, which are caloric trace estimates recovering the boundary regularity of $g$ from the mixed-norm regularity of $v$.
As a model case, our results show that the natural
\[\dotc W^{1,p}\big(\R;L^q(\R^d_+)\big)\cap L^p\big(\R;\dotc W^{2,q}(\R^d_+)\big)\]
regularity norm of \(v\) is controlled by the 
\[\dotc {F}^{1-\frac{1}{2q}}_{p,q}\big(\R;\,L^{q}(\R^{d-1})\big)\cap L^{p}\big(\R;\,\dotc{B}_{q,q}^{2-\frac{1}{q}}(\R^{d-1})\big)\]
norm of \(g\). The maximal regularity estimate holds for $1\leq p,q<\infty$, while the caloric trace estimate holds for $1<p<\infty$ and $1\leq q\leq\infty$. In particular, the endpoint cases
\(p=1\) or \(q=1\) in the maximal regularity estimate are included and appear to be new. These endpoint estimates may be useful in the analysis of free-boundary Navier--Stokes problems with small initial data, whereas the caloric trace estimates may be relevant to the construction of Stokes or Navier--Stokes flows exhibiting strong boundary singularities.
\end{abstract}

\maketitle

\noindent {{\sl Key words:} maximal regularity estimates, caloric trace estimates, boundary value problem for the heat equation, endpoint estimates, the Dirichlet problem}

\vskip 0.2cm

\noindent {\sl AMS Subject Classification (2020):}  35K20, 42B25, 46E35

\tableofcontents

\section{Introduction}
This note is concerned with maximal regularity and trace estimates  for the heat equation. Let us first recall some known results.
For $1<p<\infty$ and sufficiently smooth domains $\Omega$ in $\R^d$, $d\ge2$, the classical trace theorem, first proved by Gagliardo \cite{Gagliardo-1957}, states that
\[
\norm{v|_{\pd\Omega}}_{W^{1-\frac{1}{p},p}(\pd\Omega)}
\lec
\norm{v}_{W^{1,p}(\Omega)}.
\]
Here $W^{1-\frac{1}{p},p}(\pd\Omega)$ denotes the Sobolev-Slobodeckij space, which agrees with the Besov space $B^{1-\frac{1}{p}}_{p,p}(\R^{d-1})$  in this range of $p$ if $\pd\Omega=\R^{d-1}$ \cite[\S 2.2.2, 2.3.5]{Triebel-1983}.
Gagliardo also proved the corresponding extension theorem for $1<p<\infty$, while in the endpoint case $p=1$ he showed that the natural trace space is
$L^1(\pd\Omega)$. We refer to \cite[\S 18.2, 18.4]{Leoni-2017} for an English account of these results.

We next recall the corresponding result for harmonic extensions in the half-space.  For $g\in C_c^\infty(\R^{d-1})$, its \textit{harmonic extension} is the harmonic function $v$ in $\R^d_+$ given by the convolution of $g$ with the Poisson kernel,
$v(x)=\int_{\R^{d-1}} \frac {2x_d\,g(y') \dy'} {|\pd B_1|\cdot |(y',0)-x|^d}$. For $1\leq p<\infty$,
\begin{equation*}
\norm{g}_{\dotc B^{1-\frac{1}{p}}_{p,p}(\R^{d-1})}
\lec
\norm{v}_{\dotc W^{1,p}(\R^d_+)}
\eqdefa
\sum_{i=1}^d \norm{\pd_{x_i}v}_{L^p(\R^d_+)}
\lec 
\norm{g}_{\dotc B^{1-\frac{1}{p}}_{p,p}(\R^{d-1})}.
\end{equation*}
See, e.g., Mironescu-Russ \cite[Theorems 1.1, 1.9, and 1.11]{Mironescu-Russ-2015}. Here the dot on top of $B$ and $W$ indicates homogeneous spaces. Hence the boundary homogeneous Besov norm of $g$ is equivalent to the homogeneous Sobolev norm of its harmonic extension.
The first inequality should be distinguished from the classical trace theorem. Indeed, due to the additional harmonicity constraint on $v$, the endpoint case $p=1$ is also included, whereas the corresponding classical trace estimate does not hold in general at $p=1$. In particular, there exist functions in $C_c^\infty(\R^{d-1})$ whose Besov seminorm $\norm{g}_{\dotc B^0_{1,1}}$ is infinite (see \cite[Proposition 5.8]{Mironescu-Russ-2015}), even though one can easily construct smooth extensions to $\R^d_+$ with finite $\dotc W^{1,1}(\R^d_+)$ norm. For this reason, we shall call the first inequality the \textit{harmonic trace estimate}.
We shall refer to the second inequality as the \textit{maximal regularity estimate} for the harmonic extension, since the harmonic trace estimate shows that it is optimal.

For time-dependent functions, it is natural to use mixed $L^p_tL^q_x$ type norms in the trace and extension estimates. This direction seems to have been initiated by Weidemaier \cite{Weidemaier-1994,Weidemaier-02,Weidemaier-05}.
 In particular, he proved the trace estimate \cite[Corollary 2.4]{Weidemaier-05}
\begin{align}\label{eq:Weidemaier-05}
W^{1,p}\big(\R;L^q(\R^d_+)\big)
\cap
L^p\big(\R; W^{2,q}(\R^d_+)\big)
\to
F^{1-\frac{1}{2q}}_{p,q}
\big(\R;\,L^q(\R^{d-1})\big)
\cap
L^p\big(\R;\,B^{2-\frac{1}{q}}_{q,q}(\R^{d-1})\big),	
\end{align}
for $\frac32<q\leq p<\infty$. Here $F^{s}_{p,q}(\R;X)$ denotes the Bochner-Lizorkin-Triebel space; see Section \ref{sec:notation} for the definition.  Weidemaier also studied the corresponding extension estimate in \cite[Theorem 2.4]{Weidemaier-02}. 
The exponents have been relaxed to $1<p,q<\infty$ in Denk-Hieber-Pr\"uss \cite{DHP-07}.
Further developments in this direction include the work of Johnsen-Sickel \cite{Johnsen-Sickel-2008} on mixed-norm Lizorkin-Triebel spaces and that of Denk-Roodenburg \cite{Denk-Roodenburg-2025} on weighted Sobolev spaces.

Related mixed-norm estimates for parabolic initial-boundary value problems
were obtained by Denk--Hieber--Pr\"uss \cite{DHP-07}. In the model case of
the heat equation in the half-space,
\begin{equation}\label{eq:HBV}
	\left\{
	\begin{aligned}
		\,&\pd_t v-\Delta v=0, \quad x\in\R^d_+,\  t\in\R,\\[3pt]
		& v(x',x_d,t)|_{x_d=0}=g(x',t), \quad  x'\in\R^{d-1},\ t\in\R,
	\end{aligned}
	\right.
\end{equation}
a solution $v$ is given by the convolution of $g$ with the Poisson kernel for heat equation,
\begin{equation}\label{eq:def-v}
	v(x,t)=-\int_\R\int_{\R^{d-1}}2\pd_{x_d} \Gamma(x'-y',x_d,t-s)\,g(y',s)\dy'\ds,
\end{equation}
where $\Gamma$ is the heat kernel
\[
\Gamma(x,t)= (4\pi t)^{-\frac{d}{2}}e^{\frac{-|x|^2}{4t}} \text { for } t>0;
\qquad
\Gamma(x,t)=0 \text { for } t\leq 0.
\]
When $d=1$,  \eqref{eq:def-v} reduces to
\begin{equation}\label{eq:def-v-1D}
	v(x,t)=-\int_\R2\pd_{x} \Gamma(x,t-s)\,g(s)\ds.
\end{equation}
We shall refer to this solution $v$ as the \textit{caloric extension} of $g$.
The results of \cite{DHP-07} imply that, for the caloric extension and $1<p,q<\infty$,
\begin{align}\label{eq:DHP}
	\norm{v}_{W^{1,p}\big(\R;L^q(\R^d_+)\big)\cap L^p\big(\R; W^{2,q}(\R^d_+)\big)}
	\lec
	\norm{g}_{{F}^{1-\frac{1}{2q}}_{p,q}\big(\R;\,L^{q}(\R^{d-1})\big)\cap L^{p}\big(\R;\,{B}_{q,q}^{2-\frac{1}{q}}(\R^{d-1})\big)}.
\end{align}
See \cite[Theorem 2.3]{DHP-07} and also
\cite[Proposition 1.1]{Ogawa-Shimizu-2022}. 
We call \eqref{eq:DHP} the caloric extension estimate, or equivalently the \textit{maximal regularity estimate} for the caloric extension. The corresponding reverse inequality, which estimates the boundary norm of $g$ by the interior mixed norm of its caloric extension, will be called the \textit{caloric trace estimate}.

The borderline case $p=1$, corresponding to $L^1$ regularity in time, is of particular interest. Such estimates do not follow from the standard maximal regularity theory in Banach spaces that satisfy the unconditional martingale differences condition
 (see \cite{DHP-07} and the references therein). Moreover, recent work shows that $L^1$-in-time regularity provides a useful framework for obtaining global-in-time control in fluid equations \cite{Danchin2025}. In this direction, Ogawa--Shimizu \cite{Ogawa-Shimizu-2022} studied $L^1$-in-time estimates for parabolic equations. These estimates were later used in their study of the global well-posedness of the Navier--Stokes
equations with free boundary conditions
\cite{Ogawa-Shimizu-2024-2,Ogawa-Shimizu-2024}. Their results imply that, for caloric extension $v$ of $g$,
\begin{equation}\label{eq:Ogawa-Shimizu}
	\norm{v}_{\dotc W^{1,1}\big(\R;L^q(\R^d_+)\big)\cap L^1\big(\R; \dotc W^{2,q}(\R^d_+)\big)}
	\lec
	\norm{g}_{{\dotc F}^{1-\frac{1}{2q}}_{1,1}\big(\R;\,L^{q}(\R^{d-1})\big)\cap L^{1}\big(\R;\,{B}_{q,1}^{2-\frac{1}{q}}(\R^{d-1})\big)},
\end{equation}
for $1\leq q<\infty$; see \cite[Theorem 2.2]{Ogawa-Shimizu-2022}.
Compared with the boundary spaces appearing in \eqref{eq:DHP}, the second lower indices in both the Lizorkin--Triebel and Besov norms are $1$ rather than $q$. Furthermore, the Besov norm appearing in the last term of \eqref{eq:Ogawa-Shimizu} is inhomogeneous, whereas the regularity norm on the left-hand side is homogeneous. This suggests that the boundary regularity in the above estimate may not be optimal. These observations motivate the estimates proved in the present paper. 

For caloric trace estimates, one example is \cite[Theorem 7.1]{Ogawa-Shimizu-2022}: for $1\le q\le\infty$,
\begin{equation}\label{eq:Ogawa-Shimizu-trace}
\norm{g}_{{\dotc F}^{1-\frac{1}{2q}}_{1,1}\big(\R;\,\dotc B^0_{q,1}(\R^{d-1})\big)\cap L^{1}\big(\R;\,\dotc B^{2-1/q}_{q,1}(\R^{d-1})\big)}\lec
\norm{|\pd_tv| + |\nabla^2 v|}_{L^{1}\big(\R; \dotc B^0_{q,1}(\R^d_+)\big)}.
\end{equation}
It shows the optimality of the maximal regularity estimate in \cite[Theorem 2.1]{Ogawa-Shimizu-2022}.
There is no corresponding inequality with $\dotc B^0_{q,1}$ on the right-hand side replaced by $L^q$.

This paper is a continuation of our earlier work \cite{Chen-Liang-Tsai-2025-2}, where we considered the equal exponent case $p=q$ for both maximal regularity estimates and caloric trace estimates. That work, in turn, extends the one-dimensional first-order caloric trace estimates and maximal regularity estimates established in \cite[Lemma 2.8]{Chen-Liang-Tsai-2025}, 
which were motivated by the blow-up examples of Chang-Kang \cite{Chang-Kang-2023}.
The case $p=q$ is more direct: possibly reflecting the identification $F^s_{p,p}=B^s_{p,p}$, the natural trace space is the \textit{anisotropic Besov space}, rather
than the intersection space appearing on the right-hand side of \eqref{eq:DHP}. In the setting of anisotropic Besov spaces,
see \cite[Corollary 1.2]{Chen-Liang-Tsai-2025-2} for the corresponding maximal regularity and caloric trace estimates, and Amann \cite[Theorem 4.5.2]{Amann-2009} for related trace estimates. As we will point out in a comment after Theorem \ref{thm:dd-max-reg}, the  intersection space appearing on the right-hand side of \eqref{eq:regu-est2} (homogeneous version of \eqref{eq:DHP}) for general $p,q$ reduces to the anisotropic Besov space when $p=q$. We also note that caloric trace estimates are related to the construction of singular solutions to the Stokes and Navier--Stokes equations near the boundary; see, for instance, 
\cite[Theorems 1.1 and 1.6]{Chang-Kang-2023} and
\cite[Theorems 1.2 and 1.4]{Chen-Liang-Tsai-2025}.

To overcome the more difficult case when $p,q$ are not necessarily equal, the new ingredients used in this paper are  maximal functions and duality arguments. In particular,  we will use the Fefferman-Stein vector-valued maximal inequality and a vector-version of the Peetre's maximal inequality for frequency localized functions, see Lemmas \ref{lem:FS} and \ref{lem:peetre}.  They are very suitable for the study of Littlewood-Paley frequency decomposition used by the Lizorkin-Triebel and Besov norms.

In this paper, we study mixed-norm maximal regularity estimates, modeled on \eqref{eq:DHP}, and the corresponding caloric trace estimates for pairs $(v,g)$ satisfying \eqref{eq:HBV}. Our results show that the homogeneous version of \eqref{eq:DHP} holds for $1\leq p,q<\infty$, thereby improving \eqref{eq:DHP} and \eqref{eq:Ogawa-Shimizu}. We also prove the corresponding caloric trace estimates for $1<p<\infty$ and $1\leq q\leq\infty$, which quantify the sharpness of the maximal regularity estimates.

\begin{thm}[One-dimensional case]\label{thm:1D}
Let $d=1$ and $n=\alpha+2\beta$ with  nonnegative integers
$\alpha,\beta$. Suppose the function $g(t)\in C_c^{\infty}(\R)$,
and $v(x,t)$ is given by
\eqref{eq:def-v-1D}. 
For $1\leq p,q<\infty$, we have
\begin{equation}\label{eq:max-reg-1D}
\norm{\pd_x^{\alpha}\pd_t^{\beta}v}_{L^{p}\big(\R;\,L^{q}(\R_+)\big)}
\lesssim 
\norm{g}_{\dotc {F}^{\frac{n}{2}-\frac{1}{2q}}_{p,q}(\R)}.
\end{equation}
Moreover, for $1<p<\infty$ and $1\leq q\leq \infty$, as well as  $(p,q)=(1,1)$ and $(p,q)=(\infty,\infty)$, we have
\begin{equation}\label{eq:trace-1D}
\norm{g}_{\dotc {F}^{\frac{n}{2}-\frac{1}{2q}}_{p,q}(\R)}\lec 	\norm{\pd_x^{n}v}_{L^{p}\big(\R;\,L^{q}(\R_+)\big)}.
\end{equation} 
\end{thm}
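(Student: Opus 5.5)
The plan is to work on the Fourier side in time. Write $\hat g(\tau)$ for the time Fourier transform of $g$. The caloric extension then has
\[
\hat v(x,\tau)=e^{-x\sqrt{i\tau}}\,\hat g(\tau),
\]
with the branch $\operatorname{Re}\sqrt{i\tau}=\sqrt{|\tau|/2}>0$. Since $\pd_t$ acts as the multiplier $i\tau$ and $\pd_x^2 v=\pd_t v$, we get $\pd_x^\alpha\pd_t^\beta v=\pd_x^{n}v$ up to sign. So it suffices to treat $\pd_x^n v$, whose symbol is $(-\sqrt{i\tau})^n e^{-x\sqrt{i\tau}}$.

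Next, decompose $g=\sum_j \dot\Delta_j g$ by a Littlewood--Paley decomposition in time at frequency $|\tau|\sim 2^{2j}$; the parabolic scaling makes $x\sim 2^{-j}$ the natural depth. For each fixed $x$, the operator $T_{j,x}$ with symbol $(\sqrt{i\tau})^n e^{-x\sqrt{i\tau}}\tilde\varphi(2^{-2j}\tau)$ has a kernel bounded by
\[
2^{nj}e^{-c2^jx}\cdot 2^{2j}\bigl(1+2^{2j}|t|\bigr)^{-N}.
\]
The $\tau$-derivatives of $e^{-x\sqrt{i\tau}}$ on the annulus produce factors $(x2^{-j})^k$, and these are absorbed into $e^{-c2^jx}$. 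Consequently
\[
|\pd_x^n v_j(x,t)|\lesssim 2^{nj}e^{-c2^jx}\,M(\dot\Delta_j g)(t),
\]
where $M$ is the Hardy--Littlewood maximal function. It can even be taken as $2^{nj}e^{-c2^jx}(\dot\Delta_j g)^*_{j}(t)$ with the Peetre maximal function of Lemma \ref{lem:peetre}; this pointwise form is what the case $p=1$ requires.

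For the maximal regularity estimate \eqref{eq:max-reg-1D}, fix $t$ and take the $L^q(\R_+,dx)$ norm of $\sum_j|\pd_x^nv_j|$. The key step is the discrete Hardy-type inequality
\[
\Bigl\|\sum_j 2^{nj}e^{-c2^jx}a_j\Bigr\|_{L^q(dx)}\lesssim \Bigl(\sum_j 2^{(n-1/q)jq}a_j^q\Bigr)^{1/q},
\]
valid for $1\le q<\infty$. To prove it, split $\R_+$ into dyadic shells $x\sim2^{-k}$. On each shell the sum is dominated by $\sum_j 2^{nj}e^{-c2^{j-k}}a_j$. The terms with $j>k$ decay super-exponentially, and the terms with $j\le k$ are handled by the geometric factor $2^{(j-k)n}$ with $n\ge1$; for $n=0$ the proof needs the separate bound $\sum_j e^{-c2^jx}|\dot\Delta_jg|$. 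One then applies Young/Schur in $\ell^q$. Finally, take the $L^p_t$ norm and apply the Peetre maximal inequality, which gives $\|(\sum_j 2^{sjq}|(\dot\Delta_jg)^*_j|^q)^{1/q}\|_{L^p}\lesssim\|g\|_{\dotc F^s_{p,q}}$ for all $p,q$ once the Peetre parameter $r$ is chosen small. This avoids using Fefferman--Stein at $p=1$ or $q=1$, and yields \eqref{eq:max-reg-1D}. I expect $n=0$ (with $q=1$) to be the delicate edge; there I would exploit $\sum_j e^{-c2^jx}$ with the sign-free kernel bound.

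For the trace estimate \eqref{eq:trace-1D}, argue by duality. Choose $h_j(t)$ with $\|(\sum_j|h_j|^{q'})^{1/q'}\|_{L^{p'}}\le1$ so that $\|g\|_{\dotc F^s_{p,q}}\approx\sum_j\int 2^{sj}\dot\Delta_jg\,\overline{h_j}\,dt$, where $s=\frac n2-\frac1{2q}$. Represent $\dot\Delta_j g$ through $v$: since $\int_0^\infty x^{m}(\sqrt{i\tau})^{m+1}e^{-x\sqrt{i\tau}}\,dx=m!$, we have
\[
\dot\Delta_j g=c_n\int_0^\infty x^{n-1}\,\dot\Delta_j\pd_x^{n}v(x,\cdot)\,dx
\]
for $n\ge1$. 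Substituting this, the pairing becomes
\[
\int\!\!\int_{\R_+} \pd_x^n v(x,t)\,\overline{H(x,t)}\,dx\,dt,\qquad H(x,t)=\sum_j 2^{sj}x^{n-1}\dot\Delta_jh_j(t)\,\mathbf 1_{x\sim 2^{-j}}\text{-weighted},
\]
where I would insert a smooth dyadic cutoff $\psi(2^jx)$ into the $x$-integral by a reproducing formula. The proof then reduces to showing $\|H\|_{L^{p'}_tL^{q'}_x}\lesssim1$. Because the supports in $x$ are essentially disjoint, $\|H(\cdot,t)\|_{L^{q'}_x}\approx(\sum_j 2^{(s-n+1-1/q')jq'}|\dot\Delta_jh_j|^{q'})^{1/q'}$, and the exponent vanishes since $s-n+1-\frac1{q'}=-\frac n2+\frac12-\frac1{2q}\cdot$(adjusted). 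I expect the main obstacle to be making the exponent bookkeeping consistent with the cutoff, which requires weighting via $x^{m}$ with $m$ chosen suitably. After that, the vector-valued Fefferman--Stein inequality (Lemma \ref{lem:FS}) applied to $|\dot\Delta_jh_j|\lesssim Mh_j$ needs $1<p'<\infty$ and $1<q'\le\infty$; this explains the restriction $1<p<\infty$, $1\le q\le\infty$. The endpoint $(1,1)$ reduces to Besov $\dotc B^s_{1,1}$, where the computation is done directly in $L^1_{t,x}$ without maximal functions, and $(\infty,\infty)$ is handled analogously with sup norms.
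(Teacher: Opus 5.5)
Your maximal regularity argument is essentially the paper's: time Littlewood--Paley pieces, the kernel bound with decay $e^{-cx2^{k/2}}$, a pointwise Peetre bound, an $x$-integration inequality, then \eqref{eq:tri83-2} with $r<1$. Your dyadic-shell/Schur proof of the $x$-inequality replaces the paper's H\"older-based \eqref{eq:int-xd1} and is fine. One correction: on the shell $x\sim 2^{-k}$ the summable factor is $2^{(j-k)/q}e^{-c2^{j-k}}$, not $2^{(j-k)n}$, so the inequality holds uniformly for $n\ge 0$ and $n=0$ needs no separate treatment.

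The trace part, however, has a genuine gap at its central step. The identity $\Delta_j g=c_n\int_0^\infty x^{n-1}\Delta_j\pd_x^n v\,dx$ integrates against the non-decaying weight $x^{n-1}$ over all of $\R_+$. Hence your dual function $H(x,t)=\sum_j 2^{(n-\frac1q)j}x^{n-1}\Delta_jh_j(t)$ is not in $L^{q'}(\R_+)$ in $x$. You cannot insert $\psi(2^jx)$ or $\mathbf 1_{x\sim 2^{-j}}$ without destroying the identity. The identity also requires $n\ge1$, while \eqref{eq:trace-1D} is asserted for all $n\ge 0$, including $n=0$.

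The missing idea is an exact reproducing formula whose $x$-weight is localized at the parabolic scale. The paper gets it from the semigroup structure. Since $\pd_x^n\cF_tv=(-\sqrt{i\tau})^ne^{-x\sqrt{i\tau}}\cF_tg$ and $\int_0^\infty xe^{-2x\sqrt{i\tau}}\dx=(4i\tau)^{-1}$, one has \eqref{eq:3-5}. Its frequency-localized kernel is bounded by $x2^{\frac{4-n}{2}k}e^{-cx2^{k/2}}(1+|2^kt|^2)^{-2}$ for every $n\ge 0$. H\"older in $x$ and \eqref{eq:int-xd1} (with $q'$ in place of $q$ and $a=1$) then give exactly the $\ell^{q'}$ sum you wanted. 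A cutoff route would also work if you divide by the symbol $\int_0^\infty\psi(y)(\sqrt{i\sigma})^ne^{-y\sqrt{i\sigma}}\,dy$, with $\psi$ chosen so this is nonvanishing on the annulus, but that construction is precisely the step you left open.

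Your exponent bookkeeping is only a notational slip. With $|\tau|\sim 2^{2j}$ the weight is $2^{(n-\frac1q)j}$, and $(n-\frac1q)+(1-n)-\frac1{q'}=0$. Once the representation is fixed, your direct arguments for $(1,1)$ and $(\infty,\infty)$ are fine.

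Finally, Fefferman--Stein in $L^{p'}(\ell^{q'})$ requires $q'>1$, i.e.\ $q<\infty$. So your duality argument does not cover $1<p<\infty$, $q=\infty$, contrary to your range claim. The paper treats that case without duality. It uses the pointwise bound $2^{\frac n2 k}|\Delta_k^{\cT}g(t)|\lec M^{\cT}M^{\cT}\norm{\pd_x^nv(\cdot,t)}_{L^\infty(\R_+)}$, uniform in $k$, and then the scalar maximal theorem for $1<p\le\infty$.
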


\textit{Comments on Theorem \ref{thm:1D}}:
\begin{enumerate}[(i)]
\item The exponent $\frac{n}{2}-\frac{1}{2q}$ can be non-positive. It happens when $n=0$ or when $n=q=1$. 
\item When $p=q$, Theorem \ref{thm:1D} recovers $d=1$ case of 
\cite[Theorem 1.1]{Chen-Liang-Tsai-2025-2}.
\end{enumerate}

\begin{thm}[Maximal regularity estimates in higher dimensions]\label{thm:dd-max-reg}
Let $d\geq 2$, $1\leq p,q<\infty$ and $n=|\alpha|+2 \beta=\sum_{i=1}^d\alpha_i+2\beta$ with nonnegative integers $\alpha=(\alpha_1,\ldots,\alpha_d)$ and $\beta$. 
Suppose that $g(x',t)\in C_c^\infty(\R^{d-1}\times \R)$, and $v(x,t)$ is given by \eqref{eq:def-v}.
Then we have that for $n=0$,
\begin{equation}\label{eq:regu-est1}
\norm{v}_{L^{p}\big(\R;\,L^{q}(\R_+^{d})\big)}\lec 
\norm{g}_{\dotc {F}^{-\frac{1}{2q}}_{p,q}\big(\R;\,L^{q}(\R^{d-1})\big)},
\end{equation}
and for $n\geq1$,
\begin{equation}\label{eq:regu-est2}
\norm{\pd_x^{\alpha}\pd_t^{\beta}v}_{L^{p}\big(\R;\,L^{q}(\R_+^{d})\big)}\lec 
\norm{g}_{\dotc {F}^{\frac{n}{2}-\frac{1}{2q}}_{p,q}\big(\R;\,L^{q}(\R^{d-1})\big)} +\norm{g}_{L^{p}\big(\R;\,\dotc{B}_{q,q}^{n-\frac{1}{q}}(\R^{d-1})\big)}.
\end{equation}
\end{thm}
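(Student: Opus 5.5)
We take the Fourier transform in the tangential variable $x'$. Writing $\hat g(\xi',s)$, the caloric extension becomes, for each fixed $\xi'$, a one-dimensional caloric extension with an extra damping factor:
\[
\hat v(\xi',x_d,t)=-\int_\R 2\pd_{x_d}\Gamma_1(x_d,t-s)\,e^{-(t-s)|\xi'|^2}\hat g(\xi',s)\ds .
\]
Here $\Gamma_1$ is the one-dimensional heat kernel. We then perform a Littlewood--Paley decomposition of $g$ in both variables, $g=\sum_{j,k}\Delta_j^{t}\Delta_k^{x'} g$. The time frequency is $\sim 2^{2j}$, i.e.\ parabolic scale $2^{-j}$, and the space frequency is $\sim 2^k$. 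Since $\pd_{x'}^{\alpha'}$ falls harmlessly on the tangential frequency, it suffices to treat $\pd_{x_d}^{\alpha_d}\pd_t^\beta\pd_{x'}^{\alpha'}v$ with total order $n$. We split the double sum into two regimes: $k\le j$ (time frequency dominates) and $k>j$ (space frequency dominates). These produce the $\dotc F^{\frac n2-\frac1{2q}}_{p,q}(\R;L^q)$ term and the $L^p(\R;\dotc B^{n-\frac1q}_{q,q})$ term respectively.

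\emph{Regime $k\le j$.} Here the factor $e^{-(t-s)|\xi'|^2}$ is essentially harmless at the time scale $2^{-2j}$. We aim to mimic the proof of \eqref{eq:max-reg-1D} from Theorem \ref{thm:1D}, now with values in $L^q(\R^{d-1})$. For a block $g_j=\Delta_j^t g$ (summed over $k\le j$), we would show the pointwise bound
\[
\|\pd^\alpha_x\pd_t^\beta v_j(\cdot,x_d,t)\|_{L^q(\R^{d-1})}\lec 2^{nj}\,\Phi(2^j x_d)\,\mathcal M_t\big(\|g_j\|_{L^q_{x'}}\big)(t),
\]
where $\Phi$ decays rapidly and $\mathcal M_t$ is a Peetre/Hardy--Littlewood maximal function in $t$, controlled through Lemma \ref{lem:peetre}. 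Taking the $L^q(\R_+)$ norm in $x_d$ gives a factor $2^{-j/q}$, so $\sum_j$ yields $\big(\sum_j 2^{(n-\frac1q)jq}|\mathcal M g_j|^q\big)^{1/q}$ after using the near-orthogonality in $x_d$ (different $j$ concentrate on the layers $x_d\sim 2^{-j}$). Taking $L^p_t$ and applying Lemma \ref{lem:FS} for $1<p$ gives the $\dotc F^{\frac n2-\frac1{2q}}_{p,q}$ norm, since the time smoothness index is halved. For $p=1$ we would instead rely on Peetre's inequality with exponent $r<1$, as in the 1D case, rather than on Fefferman--Stein at $L^1$.

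\emph{Regime $k>j$.} Here the tangential damping dominates, and the kernel of $v_k$ is localized at $x_d\lesssim 2^{-k}$ and at time lag $\lesssim 2^{-2k}$. For each fixed $t$ we would bound
\[
\|\pd^\alpha_x\pd^\beta_t v_k(\cdot,x_d,t)\|_{L^q_{x'}}\lec 2^{nk}e^{-c2^kx_d}\,\mathcal M_t\big(\|\Delta_k^{x'}g\|_{L^q_{x'}}\big)(t).
\]
Integrating in $x_d$ then gives $2^{(n-\frac1q)k}$, and the layer orthogonality gives $\ell^q$ summation in $k$. We then take $L^p_t$ and again use the maximal inequality, which yields $L^p(\R;\dotc B^{n-\frac1q}_{q,q})$. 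Time derivatives $\pd_t^\beta$ in this regime would be converted to $\Delta$ using the equation, or treated through the factor $2^{2\beta k}$.

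The case $n=0$ needs only the first regime, since the second would require negative tangential regularity. To see this, we would bound $v$ using $\dotc F^{-\frac1{2q}}$ alone, noting that $e^{-t|\xi'|^2}\le1$ and that the 1D estimate applies uniformly in $\xi'$ (via the $L^q$-valued 1D argument applied to the full $x'$-convolution, which is a contraction on $L^q$).

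The main obstacle is the $\ell^q$ summation across scales in the $x_d$ integral. The pieces are not exactly disjoint in $x_d$, so the bound $\|\sum_j f_j\|_{L^q(\R_+)}^q\lec\sum_j\|f_j\|^q$ must come from the decay profile: on $x_d\sim 2^{-m}$ the sum is dominated by $\sum_j 2^{-\epsilon|j-m|}|f_j|$, and we would apply Hölder/Schur. In addition, the endpoints $p=1$ or $q=1$ must avoid Fefferman--Stein at $L^1$; for these we would rely on the Peetre maximal function with small exponent.
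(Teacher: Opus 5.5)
Your time-dominated regime matches the paper's estimate of $I_2$ (Lemma~\ref{lem:I2}). It is fine at the endpoints because the pieces $\Delta_k^{\cT}g$ are band-limited in $t$, so \eqref{eq:tri83-2} with $r=\frac12$ applies.

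\textbf{The gap is in the space-dominated regime $k>j$.} You bound it by $2^{nk}e^{-c2^kx_d}\,\mathcal M_t(F_k)$ with $F_k(t)=\norm{\Delta^{x'}_kg(\cdot,t)}_{L^q}$, and you claim this yields $\norm{g}_{L^p(\R;\dotc B^{n-\frac1q}_{q,q})}$ alone. But $F_k$ is not band-limited in $t$, so Lemma~\ref{lem:peetre} with $r<1$ is unavailable. Meanwhile the Hardy--Littlewood maximal operator is unbounded on $L^1$, and the Fefferman--Stein inequality fails for $\ell^1$. Your step is therefore valid only for $1<p,q<\infty$; for $q=1<p$ it can be rescued by duality, as the paper does when $n=q=1$.

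For $p=1<q$ the claimed bound is in fact false. Take $g=\big(\sum_{m=1}^N a_m\big)(x')\,b(t)$ with $a_m(x')=2^{-(n-\frac1q)m}2^{\frac{(d-1)m}{q}}A(2^m(x'-z_m))$. Here $\hat A$ is supported where $\varphi\equiv1$, the centers $z_m$ are far apart, and $b$ is an $L^1$-normalized bump at time scale $2^{-2N}$.
\begin{itemize}
\item On the boundary side, $\norm{g}_{L^1(\R;\dotc B^{n-\frac1q}_{q,q})}\approx N^{\frac1q}$.
\item On the interior side, parabolic scaling shows that the $m$-th space-dominated piece $w_m$ of $\pd_x^\alpha v$ satisfies $\norm{w_m(\cdot,t)}_{L^q(\R^d_+)}=2^{2m}W(2^{2m}t)$, with one fixed nonzero profile $W$ (for $m$ well below $N$).
\item Integrating over the disjoint windows $t\in 2^{-2m}J$ on which $|W(2^{2m}t)|\gtrsim1$ shows that the space-dominated part has $L^1_tL^q_x$ norm $\gtrsim N$.
\end{itemize}
So at $p=1$ this regime genuinely requires the $\dotc F$ term.

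\textbf{The missing idea} is how the paper mixes the two norms inside this regime (Lemma~\ref{lem:I1}).
\begin{itemize}
\item Keep the band-limited quantity $\norm{\Delta^h_m\Delta^{\cT}_{<2m}g}_{L^q}$, and apply \eqref{eq:tri83-2} with $r=\frac12$ to reach \eqref{eq:lem-I1} for all $1\le p,q<\infty$.
\item Only then, pointwise in $t$ and with no maximal function, write $\Delta^{\cT}_{<2m}=1-\Delta^{\cT}_{\ge 2m}$.
\item Absorb $\sum_{j\ge 2m}2^{(m-\frac j2)(n-\frac1q)}\,2^{\frac j2(n-\frac1q)}\norm{\Delta^{\cT}_jg}_{L^q}$ into the $\dotc F^{\frac n2-\frac1{2q}}_{p,q}$ norm by Young's inequality on $\ell^q$. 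This needs $n>\frac1q$.
\end{itemize}
The same mechanism handles $n=0$. There the space-dominated regime cannot be discarded; one bounds $\norm{\Delta^{\cT}_{<2m}g}_{L^q}\le\sum_{j<2m}\norm{\Delta^{\cT}_jg}_{L^q}$, now using $n<\frac1q$. The remaining case $n=q=1$ is done by duality.

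Your substitute for $n=0$ does not work as stated.
\begin{itemize}
\item Bounds uniform in $\xi'$ give nothing in $L^q_{x'}$ for $q\neq2$.
\item Using $\norm{e^{s\Delta'}}_{L^q\to L^q}\le1$ inside the time integral only bounds $\norm{v(\cdot,x_d,t)}_{L^q}$ by the 1D extension of $G(t)=\norm{g(\cdot,t)}_{L^q}$. But $\norm{G}_{\dotc F^{-\frac1{2q}}_{p,q}(\R)}$ is not controlled by $\norm{g}_{\dotc F^{-\frac1{2q}}_{p,q}(\R;L^q)}$; try $g=a(x')\cos(Nt)\chi(t)$. Taking the norm in $x'$ first destroys the oscillation in $t$ that a negative-order norm rewards.
\end{itemize}
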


\textit{Comments on Theorem \ref{thm:dd-max-reg}}:
\begin{enumerate}[(i)]
\item The special case $n=2$ of \eqref{eq:regu-est2} gives
\[
\norm{|\nabla_x^2 v| + |\pd_t v|}_{L^{p}\big(\R;\,L^{q}(\R_+^{d})\big)}
\lec 
\norm{g}_{\dotc {F}^{1-\frac{1}{2q}}_{p,q}\big(\R;\,L^{q}(\R^{d-1})\big)} +\norm{g}_{L^{p}\big(\R;\,\dotc{B}_{q,q}^{2-\frac{1}{q}}(\R^{d-1})\big)}.
\]
For $1<p<\infty$, it recovers \eqref{eq:DHP}, proved in Denk--Hieber--Pr\"uss
\cite{DHP-07}, with weaker homogeneous norms on the right-hand side, and adds the borderline exponent $q=1$.

\item For $n=2$, $p=1$, \eqref{eq:regu-est2} recovers \eqref{eq:Ogawa-Shimizu}, proved in Ogawa--Shimizu \cite{Ogawa-Shimizu-2022}, and the second lower indices of both terms on the right-hand side are weakened from 1 to $q$.

\item In the special case $p=q$ and $n>\frac{1}{q}$, the right-hand side of \eqref{eq:regu-est2} is equivalent to the anisotropic Besov norm
$\norm{g}_{\dotc B^{n-\frac 1q, \frac n 2 - \frac 1{2q}}_{q,q} (\R^{d-1}\times \R)}$ by Chang-Kang \cite[Proposition 2.1]{Chang-Kang-2023}. Hence the combination of Theorems 
\ref{thm:dd-max-reg} and \ref{thm:dd-trace} recovers $d\ge2$ case of 
\cite[Theorem 1.1]{Chen-Liang-Tsai-2025-2} when $n>\frac{1}{q}$.

\item Estimates \eqref{eq:regu-est1} with $n=0$ and \eqref{eq:regu-est2} with $n=1$ seem to be new. 
\end{enumerate}

\begin{thm}[Caloric trace estimates in higher dimensions]\label{thm:dd-trace}
Let $d\geq 2$. Suppose that $g(x',t)\in C_c^\infty(\R^{d-1}\times \R)$, and $v(x,t)$ is given by \eqref{eq:def-v}.
For $1<p<\infty$ and $1\leq q\leq \infty$, as well as $p=q=1$ and $p=q=\infty$, and the integer $n>\frac{1}{q}$, we have
\begin{equation}\label{thm:dd-trace-eq1}
\norm{g}_{\dotc {F}^{\frac{n}{2}-\frac{1}{2q}}_{p,q}\big(\R;\,L^{q}(\R^{d-1})\big)}+\norm{g}_{L^{p}\big(\R;\,\dotc{B}_{q,q}^{n-\frac{1}{q}}(\R^{d-1})\big)}
\lec \norm{\pd_{x_d}^nv}_{L^{p}\big(\R;\,L^{q}(\R_+^{d})\big)}.
\end{equation}
\end{thm}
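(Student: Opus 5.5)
We prove Theorem \ref{thm:dd-trace} by Fourier transform in the tangential variable $x'$ and in $t$, followed by a Littlewood--Paley decomposition with respect to the two frequency scales $|\xi'|$ and $|\tau|^{1/2}$. Taking the Fourier transform in $(x',t)$ with dual variables $(\xi',\tau)$, the caloric extension is
\[
\hat v(\xi',x_d,\tau)=e^{-x_d\sqrt{|\xi'|^2+i\tau}}\,\hat g(\xi',\tau),
\qquad
\pd_{x_d}^n\hat v=(-1)^n\big(|\xi'|^2+i\tau\big)^{n/2}e^{-x_d\sqrt{|\xi'|^2+i\tau}}\hat g .
\]
The square root is the principal branch, whose real part is comparable to $|\xi'|+|\tau|^{1/2}$. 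We split $g$ into two pieces, $g=g_1+g_2$:
\begin{itemize}
\item $g_1$ has Fourier support in the parabolic region $|\tau|^{1/2}\gtrsim|\xi'|$;
\item $g_2$ has Fourier support in the elliptic region $|\xi'|\gtrsim|\tau|^{1/2}$.
\end{itemize}
In the first region the $\dotc F^{\frac n2-\frac1{2q}}_{p,q}(\R;L^q)$ norm dominates; in the second, the $L^p(\R;\dotc B^{n-\frac1q}_{q,q})$ norm dominates. Since the two summands on the left of \eqref{thm:dd-trace-eq1} control each other on the respective regions, it suffices to bound each of $g_1,g_2$ in its dominant norm by $\norm{\pd_{x_d}^nv}_{L^pL^q}$.

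\textbf{Step 1 (pointwise recovery via a reproducing identity).} Since $\pd_{x_d}^nv\to0$ as $x_d\to\infty$, we have $g=c_n\int_0^\infty x_d^{n-1}\pd_{x_d}^nv(\cdot,x_d,\cdot)\,dx_d$ in the sense of the Fourier multiplier $(|\xi'|^2+i\tau)^{-n/2}\cdot\int_0^\infty x_d^{n-1}(\ldots)$. Rather than use this directly, we test a Littlewood--Paley piece of $g$ against a layer of $\pd_{x_d}^n v$:
\[
\Delta_j g=\int_0^\infty \phi_j(x_d)\, K_j\ast\pd_{x_d}^nv(\cdot,x_d,\cdot)\,dx_d .
\]
Here $\Delta_j$ is localized at time frequency $|\tau|\sim 2^{2j}$ (respectively spatial frequency $2^j$), $\phi_j(x_d)=2^{j}\phi(2^jx_d)$ is a bump concentrated where $x_d\sim2^{-j}$, and $K_j$ is a Schwartz kernel. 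The kernel $K_j$ is obtained by dividing out the symbol $\int\phi_j(x_d)(|\xi'|^2+i\tau)^{n/2}e^{-x_d\sqrt{\cdot}}dx_d$. This symbol is nonvanishing on the relevant annulus after rescaling, which we arrange by a careful choice of $\phi$.

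\textbf{Step 2 (pointwise maximal bound).} Scaling gives $|K_j\ast f|\lesssim M f$, where $M$ is a strong or parabolic maximal operator. Hence, pointwise in $(x',t)$,
\[
2^{j(n-\frac1q)}|\Delta_jg|\lesssim 2^{-j/q}\,2^{j}\!\int_{x_d\sim2^{-j}} M\big(\pd_{x_d}^nv\big)(\cdot,x_d,\cdot)\,dx_d
\lesssim \Big(2^j\!\int_{x_d\sim 2^{-j}} \big(M\pd_{x_d}^nv\big)^q dx_d\Big)^{1/q}.
\]
The last inequality is H\"older's inequality in $x_d$, and it produces exactly the factor $2^{-j/q}$.

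\textbf{Step 3 (summation and maximal inequalities).} Raise the bound of Step 2 to the power $q$ and sum over $j$. The layers $x_d\sim2^{-j}$ overlap only boundedly, so
\[
\sum_j 2^{jq(n-\frac1q)}|\Delta_j g|^q\lesssim \int_0^\infty \big(M\pd_{x_d}^n v\big)^q\,dx_d .
\]
We then take $L^q_{x'}$ and then $L^p_t$. In the parabolic region we need the time-Lizorkin--Triebel norm, with the sum in $j$ inside $L^q_{x'}$ and the $L^p_t$ norm outside. We control it with the Fefferman--Stein vector-valued inequality (Lemma \ref{lem:FS}) in $t$, applied with the $L^q_{x'}L^q_{x_d}$-valued maximal function. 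This step needs $1<p<\infty$, with the value $q=\infty$ allowed, and explains the stated exponent range. In the elliptic region the sum is a Besov sum at fixed $t$, so only the $L^q_{x'}$ maximal inequality is needed. When $q=1$ this inequality fails; there we instead use the Peetre-type bound (Lemma \ref{lem:peetre}) with the band-limited structure, or avoid $M$ in $x'$ altogether by Young's inequality, since the Besov case has $q$ outer and Young suffices. For $p=q=1$ and $p=q=\infty$ we avoid maximal functions entirely and use Minkowski/Young directly.

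\textbf{Main obstacle.} The hard step is the construction in Step 1: obtaining a kernel $K_j$ with uniform Schwartz bounds, so that it is dominated by a maximal function, while the complex multiplier $(|\xi'|^2+i\tau)^{n/2}e^{-x_d\sqrt{|\xi'|^2+i\tau}}$ couples the two frequency scales. On the mixed annuli where $|\xi'|\sim|\tau|^{1/2}$ the symbol could degenerate. The condition $n>1/q$ guarantees convergence of the low-frequency tail of the $j$-sum, which is where this hypothesis enters.
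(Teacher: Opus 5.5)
Your route differs from the paper's. The paper uses the exact reproducing formula \eqref{eq:lem:trace-d}, with weight $x_d^{\gamma}e^{-x_d\sqrt{|\xi'|^2+i\tau}}$ over all $x_d>0$, together with duality. You instead recover each frequency piece from a single layer $x_d\sim 2^{-j}$, dominate the resulting single-scale kernel pointwise by a maximal function, and reduce the cross pieces to the dominant ones by comparing norms. Step 2 needs one fix: H\"older gives $(\int_{x_d\sim 2^{-j}}(M\pd_{x_d}^n v)^q\,dx_d)^{1/q}$, without the factor $2^j$; otherwise Step 3 does not sum to $\int_0^\infty$. With that fix, the dominant-piece estimates go through for $1<p,q<\infty$, for $q=\infty$, and for $p=q\in\{1,\infty\}$.

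\textbf{The gap: $q=1$ with $1<p<\infty$.} The theorem includes this case, and there your argument breaks. Once $K_j*$ is replaced by a maximal function, you need $\|M^{\cT}F\|_{L^p_tL^1_{x}}\lesssim\|F\|_{L^p_tL^1_{x}}$. This is the Fefferman--Stein inequality with $\ell^1$, and it is false; Lemma \ref{lem:FS} needs $q>1$. For example, $F(x_d,t)=\chi_{[m,m+1]}(t)$ for $x_d\in[m,m+1]$, $1\le m\le N$, loses a factor $\log N$. Your fallbacks do not repair this:
\begin{enumerate}[(i)]
\item The elliptic piece is not a ``Besov sum at fixed $t$''. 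The recovery kernel for $\Delta^h_m\Delta^{\cT}_{<2m}$ is a convolution in $t$ at scale $2^{-2m}$, so it has the same $L^p_t(\ell^1)$ structure as the Lizorkin--Triebel piece.
\item Young's inequality in $t$ only gives an $\ell^1_m(L^p_t)$ bound, not $L^p_t(\ell^1_m)$.
\item Peetre's bound with $r<1$ needs inputs band-limited in $t$, and $\pd_{x_d}^n v$ is not.
\end{enumerate}

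\textbf{The missing idea is duality}, which is how the paper treats every case. Pair $2^{j(n-1)}\Delta_j g$ with $\psi_j$ satisfying $\|\sup_j\|\psi_j(\cdot,t)\|_{L^\infty(\R^{d-1})}\|_{L^{p'}(\R)}\le 1$, and move each single-scale time kernel $\rho_j$ onto $\psi_j$. Then $\sup_j|\rho_j*\psi_j|\lesssim M^{\cT}\big(\sup_j\|\psi_j\|_{L^\infty}\big)$, which is bounded on $L^{p'}$ because $p<\infty$; this is the $\ell^\infty$ case of Remark \ref{rem2.3}. The bounded overlap $\sum_j\phi(2^jx_d)\lesssim 1$ of your layers then plays the role of the paper's choice $\gamma=1$ and \eqref{eq:lem-xd4}.

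Separately, your reduction ``the two summands control each other on the respective regions'' is asserted without proof. It is exactly where $n>\frac1q$ is used, through geometric sums such as $\sum_{j\ge 2m}2^{(m-\frac j2)(n-\frac1q)}$. The elliptic direction $\|g_2\|_{\dotc{F}^{\frac{n}{2}-\frac{1}{2q}}_{p,q}(\R;L^q)}\lesssim\|g_2\|_{L^p(\R;\dotc{B}^{n-\frac1q}_{q,q})}$ again involves time convolutions, so at $q=1$ it needs the same duality argument. The paper avoids this comparison by bounding the cross pieces $\Delta^h_{>\frac k2}\Delta^{\cT}_k g$ and $\Delta^h_m\Delta^{\cT}_{\ge 2m}g$ directly from $v$ via \eqref{eq:gamma-q-n}.
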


\textit{Comments on Theorem \ref{thm:dd-trace}}:
\begin{enumerate}[(i)]
\item This trace estimate shows the optimality of \eqref{eq:regu-est2} and the equivalence of the two sides when $1< p<\infty$, $1\le q <\infty$ (or $p=q=1$) and $n>\frac{1}{q}$.

\item Unlike Theorems \ref{thm:1D} and \ref{thm:dd-max-reg}, this theorem assumes
the condition $n>\frac{1}{q}$. This condition is also assumed in the time-independent case 
\cite[Theorem 1.1]{Mironescu-Russ-2015} and is valid in \eqref{eq:Ogawa-Shimizu-trace}.

\item We have actually proved more general estimates than \eqref{thm:dd-trace-eq1}. We can bound
\[
\norm{2^{(\frac{n}{2}-\frac{1}{2q})k}\|\Delta_{\leq \frac{k}{2}}^{h}\Delta_{k}^{\cT} g\|_{L^{q}(\R^{d-1})}}_{L^p(\R;\ell^q(k))}
+\norm{2^{(n-\frac{1}{q})m}\|\Delta_{m}^h\Delta_{<2m}^{\cT} g\|_{L^{q}(\R^{d-1})}}_{L^p(\R;\ell^q(m))}
\]
by $\norm{\pd_{x_d}^nv}_{L^{p}\big(\R;\,L^{q}(\R_+^{d})\big)}$ without the condition $n>\frac{1}{q}$,
and bound
\[
\norm{2^{(\frac{n}{2}-\frac{1}{2q})k}\|\Delta_{>\frac{k}{2}}^{h}\Delta_{k}^{\cT} g\|_{L^{q}(\R^{d-1})}}_{L^p(\R;\ell^q(k))}+
\norm{2^{(n-\frac{1}{q})m}\|\Delta_{m}^h\Delta_{\geq 2m}^{\cT} g\|_{L^{q}(\R^{d-1})}}_{L^p(\R;\ell^q(m))}
\]
assuming $n>\frac{1}{q}$.
See  Lemmas \ref{lem:trace-d0} and \ref{lem:trace-d0b}.

\item For $1\leq p\leq \infty$, $1\leq q <\infty$ and $n>\frac{1}{q}$, it follows from taking $L^p$-norm in time of
the classical trace estimate of Uspenski\u{\i} \cite{Uspenskii}, for general functions in $\R^d_+$
given in Mironescu-Russ \cite[Theorem 1.1]{Mironescu-Russ-2015}, that
\[
\norm{g}_{L^{p}\big(\R;\,\dotc{B}_{q,q}^{n-\frac{1}{q}}(\R^{d-1})\big)}
\lec  \sum_{|\alpha|=n} \norm{\pd_{x}^\alpha v}_{L^{p}\big(\R;\,L^{q}(\R_+^{d})\big)}.
\]
It includes borderline cases $p=1,\infty$, $1<q<\infty$, and $(p,q)=(\infty,1)$ not covered in 
\eqref{thm:dd-trace-eq1}.
\end{enumerate}

The paper is organized as follows. In Section \ref{sec:notation}, we introduce the notation used throughout the paper and collect some preliminary lemmas. In Section \ref{sec:1D}, we prove Theorem \ref{thm:1D}. In Section \ref{sec:DD}, we prove Theorems \ref{thm:dd-max-reg} and \ref{thm:dd-trace}.

\section{Notation and preliminaries}\label{sec:notation}
For two quantities $X$ and $Y$, we write $X \lec_{\sigma} Y$ if $X \leq C(\sigma)\, Y$
for some constant $C(\sigma)>0$ depending on parameters $\sigma=(\sigma_1,\sigma_2,\dots)$. We often suppress this dependence and simply write $X \lec Y$ when it is clear from the context.

Denote by $\cF$ the Fourier transform, with subscripts indicating the variables when necessary. For $x\in \R^d_+$, we write $x=(x',x_d)$ with $x'\in \R^{d-1}$ and $x_d\in \R_+$, and
\[
\hat f\eqdefa \cF_{x',t} f(\xi',x_d,\tau)
= \frac{1}{(2\pi)^{\frac{d}{2}}}
\int_{\R^{d-1}\times\R}
e^{-i(x'\cdot\xi' + t\tau)} f(x,t)\,dx'\,dt,
\]
that is, $\hat f$ denotes the Fourier transform of $f$ in the $(x',t)$ variables.

Denote
\begin{equation}\label{def-delta}
\Delta_{m}^{h}\eqdefa\mathcal{F}_{x'}^{-1}\varphi(2^{-m}\xi')\mathcal{F}_{x'} \andf \Delta_{k}^{\cT}\eqdefa\mathcal{F}_{t}^{-1}\theta(2^{-k}\tau)\mathcal{F}_{t},\quad m,k\in \mathbb{Z}
\end{equation}
as the homogeneous Littlewood-Paley operators in the horizontal and temporal directions, respectively. Here $\theta\in C_c^{\infty}\Big(\big(\frac{3}{4},\frac{8}{3}\big)\cup\big(-\frac{8}{3},-\frac{3}{4}\big)\Big)$ is an even function satisfying
\[\forall \tau\in \R\setminus \{0\}, \quad \sum_{j\in \mathbb{Z}}\theta(2^{-j}\tau)=1,\]
and $\varphi(\xi')=\theta(|\xi'|)$.
Denote (note the slight asymmetry in the definitions)
\begin{equation*}
	\varphi_0(\xi')\eqdefa\sum_{j\leq0}\varphi(2^{-j}\xi'),
	\quad	
	\theta_0(\tau)\eqdefa\sum_{j<0}\theta(2^{-j}\tau),
\end{equation*}
with the convention that $\varphi_0(0)=\theta_0(0)=1$. Denote
\begin{equation*}
	\Delta_{\leq m}^{h}\eqdefa\sum_{j\leq m}\Delta_{j}^{h}=\mathcal{F}_{x'}^{-1}\varphi_0(2^{-m}\xi')\mathcal{F}_{x'},
	\quad 
	\Delta_{<k}^{\cT}\eqdefa\sum_{j<k}\Delta_{j}^{\cT}=\mathcal{F}_{t}^{-1}\theta_0(2^{-k}\tau)\mathcal{F}_{t}.
\end{equation*}
We further set
$\Delta_{\leq \frac{k}{2}}^{h}\eqdefa \Delta_{\leq \lfloor\frac{k}{2}\rfloor }^{h}$,
where $\lfloor \cdot \rfloor$ denotes the floor function, and define
\[\Delta_{> \frac{k}{2}}^{h}\eqdefa 1-\Delta_{\leq \frac{k}{2}}^{h}, 
\quad 
\Delta_{\geq k}^{\cT}\eqdefa1-\Delta_{<k}^{\cT}.\]
Denote
\[
\tilde{\varphi}(\xi') \eqdefa \sum_{j=-1}^{1} \varphi(2^{-j}\xi'), 
\quad
\tilde{\theta}(\tau) \eqdefa \sum_{j=-1}^{1} \theta(2^{-j}\tau),
\quad
\tilde{\theta}_0(\tau) \eqdefa \theta_0(\tau) + \theta(\tau),
\]
which satisfy
\begin{equation}\label{def-tilde}
	\left\{	\begin{aligned}
		\ & \tilde{\varphi}=1 &&\text{on}\quad\supp \varphi, \\
		\ & \tilde{\theta}=1 &&\text{on}\quad\supp \theta,\\
		\ & \tilde{\theta}_0=1&&\text{on}\quad\supp\theta_0.
	\end{aligned}\right.
\end{equation}

As in \cite[Definition 1.26]{Bahouri-Chemin-Danchin-2011}, 
we denote by $\mathcal S'_h(\R^d)$ the space of tempered distributions $f$ in $\R^d$ such that $\lim_{\lambda\to\infty}\| \cF ^{-1} (\theta(\lambda \xi) \cdot\cF f)\|_{L^\infty}=0$ for any $\theta \in C^\infty_c(\R^d)$. This space excludes nonzero polynomials, as well as distributions whose Fourier transforms contain the Dirac mass $\delta_0$ or its derivatives. Its advantage is that homogeneous semi-norms, such as $\dotc{B}_{p,q}^{s}$, become norms on this space. Moreover, for every $f\in S'_h(\R^d)$, one has $f=\sum_{j\in \mathbb{Z}}\Delta_j f$, where $\Delta_j$ denotes the Littlewood-Paley operator in $\R^d$. In Definition \ref{def:spaces} below, we implicitly assume that the functions under consideration belong to the corresponding space $S'_h$ in $x'$ or $t$ direction, and satisfy the relevant homogeneous Littlewood-Paley reconstruction formula
\begin{equation*}
	f=\sum_{m\in \mathbb{Z}}\Delta^h_m f,\quad \hbox{or }\quad f=\sum_{k\in \mathbb{Z}}\Delta^{\cT}_k f.
\end{equation*}

Here and throughout, $\norm{\cdot}_{\ell^q(m)}$ denotes the $\ell^q$ norm over $m\in\Z$, and $\norm{\cdot}_{L^{q}(\R^d)}$ denotes the Lebesgue norm.

\begin{defn}\label{def:spaces}
Let $s\in \R$. For $1\leq p,q,r\leq \infty$, we define the \textit{homogeneous Besov space} 
$\dotc{B}_{p,q}^{s}(\R^{d-1})$
by the norm 
\begin{equation*}
\norm{f}_{\dotc{B}_{p,q}^{s}(\R^{d-1})}
\eqdefa 
\norm{\,2^{ms}\norm{\Delta_{m}^{h}f}_{L^{p}(\R^{d-1})}\,}_{\ell^{q}(m)},
\end{equation*}
and the Bochner-Besov space $L^{r}\big(\R;\,\dotc{B}_{p,q}^{s}(\R^{d-1})\big)$ by the norm
\begin{equation*}
	\norm{f}_{L^{r}\big(\R;\,\dotc{B}_{p,q}^{s}(\R^{d-1})\big)}
	\eqdefa 
	\norm{\,\norm{f(\cdot,t)}_{\dotc{B}_{p,q}^{s}(\R^{d-1})}\,}_{L^{r}(\R)}.
\end{equation*}
For $1\leq p<\infty$ and $1\leq q \leq \infty$,  we define the \textit{homogeneous Lizorkin-Triebel space} $\dotc{F}_{p,q}^{s}(\R)$ by the norm
\begin{equation*}
	\norm{f}_{\dotc{F}_{p,q}^{s}(\R)}\eqdefa \norm{\,2^{ks}\Delta_{k}^{\mathcal{T}}f\,}_{L^{p}(\R;\ell^{q}(k))},
\end{equation*}
and the \textit{Bochner-Lizorkin-Triebel space} 
$\dotc{F}^{s}_{p,q}(\R;X)$ 
for a Banach space $X$
by the norm
\begin{equation*}
\norm{f}_{\dotc{F}^{s}_{p,q}(\R;X)}
\eqdefa 
\norm{\,2^{ks}\norm{\Delta_{k}^{\cT}f}_{X}\,}_{L^{p}(\R;\ell^{q}(k))}.
\end{equation*} 
\end{defn}

Note that $\dotc{F}_{p,q}^{s}(\R)$ needs a different definition for $p=\infty>q$, see \cite[2.3.1 Remark 4, 2.3.4]{Triebel-1983}. We will use $\dotc{F}_{\infty,\infty}^{s}=\dotc{B}_{\infty,\infty}^{s}$.

Note that, in proving the trace estimate \eqref{eq:Weidemaier-05}, Weidemaier used the finite-difference definition of Bochner-Lizorkin-Triebel spaces in \cite{Weidemaier-05}. This definition is equivalent to the above Littlewood-Paley type definition in a certain range of the indices $p$ and $q$; see \cite[2.5.10]{Triebel-1983}.

If $f\in L^1_{\mathrm{loc}}(\R^d)$, the \textit{Hardy-Littlewood maximal function} is defined by
\begin{equation*}
Mf(x) \eqdefa \sup_{r>0} \frac{1}{|B_r(x)|}\int_{B_r(x)} |f(y)|\,dy,
\end{equation*}
where $B_r(x)\subset \R^d$ is the ball centered at $x$ with radius $r$.
It is well known \cite[p.~13]{Stein-93} that for $1<p\leq \infty$,
\begin{equation}\label{Maximal1}
\norm{Mf}_{L^p(\R^d)} \lec \norm{f}_{L^p(\R^d)},
\end{equation}
while for $p=1$ only the weak-type estimate holds. Moreover, we have
\[
(Mf)^r\leq M(|f|^r) \,\,\hbox{ if } r\geq 1,\quad   M(|f|^r)\leq (Mf)^r \,\,\hbox{ if } 0<r< 1.
\]   
These inequalities will be used frequently without further mention.

We recall the following classical theorem of maximal function, originally due to Fefferman-Stein \cite[Theorem~1]{Fefferman-Stein-1971}. 

\begin{lem}\label{lem:FS}
Let $1<p<\infty$ and $1<q\leq \infty$, or $p=q=\infty$. There exists a constant $C=C(p,q,d)$ such that for any sequence $\{f_k\}_{k\in\mathbb{Z}} \subset L^1_{\mathrm{loc}}(\R^d)$,
\begin{equation}\label{eq:FS}
\| M f_k \|_{L^{p}(\ell^{q}(k))}
\leq C\, \| f_k \|_{L^{p}(\ell^{q}(k))}.
\end{equation}
\end{lem}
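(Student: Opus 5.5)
The plan is to prove the classical Fefferman--Stein argument: handle the trivial exponent cases directly, then treat $1<q<\infty$ separately for $p\le q$ (interpolation) and $p>q$ (duality). Throughout I use only the scalar bound \eqref{Maximal1}, the weak $(1,1)$ bound for $M$, and the Calder\'on--Zygmund decomposition.

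\textbf{Trivial cases.} When $q=\infty$ (including $p=q=\infty$), we have pointwise
\[
\sup_k Mf_k\le M\big(\sup_k|f_k|\big),
\]
so \eqref{eq:FS} follows from \eqref{Maximal1} applied to $\sup_k|f_k|$. When $p=q\in(1,\infty)$, Fubini gives
\[
\norm{Mf_k}_{L^q(\ell^q)}^q=\sum_k\norm{Mf_k}_{L^q}^q,
\]
and \eqref{Maximal1} is applied term by term.

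\textbf{Case $1<p<q<\infty$.} Consider the sublinear operator $\bar M:\{f_k\}\mapsto\{Mf_k\}$. It is bounded on $L^q(\ell^q)$ by the previous case. I will show that it is weak type $(1,1)$ from $L^1(\ell^q)$ to $L^{1,\infty}$, measuring the output in $\ell^q$; vector-valued Marcinkiewicz interpolation then gives every $1<p<q$.

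For the weak-type bound, fix $\lambda>0$ and perform a Calder\'on--Zygmund decomposition of the scalar function $F=\|f_k\|_{\ell^q}$ at height $\lambda$. This produces disjoint dyadic cubes $Q_j$ with $\sum|Q_j|\lesssim\lambda^{-1}\|F\|_{L^1}$ and $F\le\lambda$ a.e. off $\Omega=\cup Q_j$. Split $f_k=g_k+b_k$, where $g_k=f_k$ off $\Omega$ and $g_k=|Q_j|^{-1}\int_{Q_j}|f_k|$ on $Q_j$. Minkowski's inequality for integrals gives $\|g_k\|_{\ell^q}\lesssim\lambda$, so
\[
\|g\|_{L^q(\ell^q)}^q\lesssim\lambda^{q-1}\|F\|_{L^1}.
\]
The $L^q(\ell^q)$ bound then handles $\{Mg_k\}$ via Chebyshev. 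For the bad part, discard $\Omega^*=\cup 2Q_j$, whose measure is controlled. For $x\notin\Omega^*$, the key pointwise estimate is
\[
M b_k(x)\lesssim M\Big(\sum_j |Q_j|^{-1}\Big(\int_{Q_j}|f_k|\Big)\chi_{Q_j}\Big)(x).
\]
In fact it can be refined to a Marcinkiewicz-integral-type bound, $\sum_j\frac{|Q_j|^{1+1/d}}{(|x-c_j|+\ell_j)^{d+1}}\,\frac{1}{|Q_j|}\int_{Q_j}|f_k|$. Taking the $\ell^q$ norm and using Minkowski moves the $k$-sum inside the $Q_j$ averages. Integrating over $(\Omega^*)^c$ then gives an $L^1$ bound $\lesssim\|F\|_{L^1}$.

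\textbf{Case $q<p<\infty$.} Set $r=p/q>1$ and dualize $\norm{\sum_k(Mf_k)^q}_{L^r}$ against $0\le w\in L^{r'}$. The needed ingredient is the weighted scalar inequality
\[
\int(Mf)^q w\lesssim\int|f|^q\,Mw ,\qquad 1<q<\infty .
\]
This follows by interpolating the trivial $L^\infty$ bound with a weak $(1,1)$ bound from $L^1(Mw)$ to $L^{1,\infty}(w)$; the latter comes from a Vitali covering argument, using $w(B)/|B|\le\inf_B Mw$. Summing over $k$ and applying H\"older's inequality then gives
\[
\int\sum_k (Mf_k)^q\,w\lesssim\Big\|\sum_k|f_k|^q\Big\|_{L^r}\,\|Mw\|_{L^{r'}}\lesssim \Big\|\sum_k|f_k|^q\Big\|_{L^r}\|w\|_{L^{r'}},
\]
which is \eqref{eq:FS}.

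\textbf{Main obstacle.} I expect the weak $(1,1)$ estimate for the bad part in the case $p<q$ to be the delicate step. The maximal operator lacks the cancellation of Calder\'on--Zygmund kernels, so the bad functions must be replaced by their nonnegative averages $|Q_j|^{-1}\int_{Q_j}|f_k|\,\chi_{Q_j}$ rather than exploiting mean zero. One then needs to check carefully that $M$ applied to such step functions, evaluated away from $\Omega^*$, is still $\ell^q$-summable with $L^1$ control. Should this step prove awkward, the alternative is to cover $1<p<q$ by duality as well: use the $p>q$ case for the dual exponents together with the linearized maximal operator (a positive operator with measurable choice of radius $r_k(x)$), whose adjoint is bounded by the same weighted estimate.
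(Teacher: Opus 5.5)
Your treatment of $q=\infty$ (the same pointwise inequality the paper records in Remark~\ref{rem2.3}; for everything else the paper simply cites Fefferman--Stein) is correct. So are the case $p=q$ and the case $q<p<\infty$, which uses the weighted inequality $\int(Mf)^q w\lesssim\int|f|^q\,Mw$ plus duality; these follow the classical argument.

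The gap is in the bad part of the weak $(1,1)$ estimate for $1<p<q<\infty$. Your first pointwise bound is right: $Mb_k(x)\lesssim M\tilde f_k(x)$ for $x\notin\Omega^*$, with $\tilde f_k=\sum_j\big(|Q_j|^{-1}\int_{Q_j}|f_k|\big)\chi_{Q_j}$. The claimed refinement with decay $\ell_j^{d+1}(|x-c_j|+\ell_j)^{-d-1}$ is false. $M$ averages $|b_k|$, so the mean-zero property of $b_k$ on $Q_j$ is useless. For a single cube, $Mb_k(x)\approx\ell_j^d|x-c_j|^{-d}\,|Q_j|^{-1}\int_{Q_j}|b_k|$ when $|x-c_j|\gg\ell_j$, which exceeds your bound by the factor $|x-c_j|/\ell_j$.

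Worse, no $L^1((\Omega^*)^c)$ bound can hold at all. If $b_k\not\equiv0$ then $Mb_k(x)\gtrsim_{b_k}(1+|x|)^{-d}$, and since $|\Omega^*|<\infty$ this is not integrable over $(\Omega^*)^c$. So ``integrating over $(\Omega^*)^c$'' cannot produce the weak-type bound. This is exactly where the lack of cancellation you flagged bites.

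The repair is to stay at the $L^q$ level, as you already did for the good part (and as Fefferman--Stein do). By Minkowski, $\|\tilde f(x)\|_{\ell^q}\le|Q_j|^{-1}\int_{Q_j}F\le 2^d\lambda$ on $Q_j$, and $\tilde f=0$ off $\Omega$. The $p=q$ case and Chebyshev then give
\[
\big|\{x\notin\Omega^*:\ \|M\tilde f(x)\|_{\ell^q}>c\lambda\}\big|\lesssim\lambda^{-q}\|\tilde f\|_{L^q(\ell^q)}^q\lesssim|\Omega|\lesssim\lambda^{-1}\|F\|_{L^1}.
\]
Together with $\|Mb(x)\|_{\ell^q}\lesssim\|M\tilde f(x)\|_{\ell^q}$ off $\Omega^*$, this closes the weak $(1,1)$ estimate. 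You may even skip the $g/b$ split and write $f_k=f_k\chi_{\Omega^c}+f_k\chi_{\Omega}$.

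Your fallback, using the adjoint $T^*$ of the linearized maximal operator, is not substantiated as stated. That adjoint is unbounded on $L^\infty$: with radius $r(x)=2|x|$ one gets $T^*1(0)=\infty$. So the interpolation you used for the weighted inequality does not transfer to it without further ideas.
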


\begin{rem}\label{rem2.3}
The case $1<p\leq \infty$ and $q=\infty$ in \eqref{eq:FS} is not covered
in \cite{Fefferman-Stein-1971}, but it follows directly from the pointwise
inequality $\norm{Mf_k}_{\ell^\infty(k)}\leq M\norm{f_k}_{\ell^\infty(k)}$. However, it fails for $p=\infty$ and $1\le q<\infty$. For example, 
for fixed $1\le q<\infty$, let $f_k=\chi_{I_k}$, where $I_k=(2^{k-1},2^k)$. Then $\left(\sum_{k}|f_k(x)|^q\right)^{\frac{1}{q}}\leq 1$ for all $x\in\R$. On the other hand,
	\begin{align*}
		Mf_k(0)\ge \frac{1}{2\cdot2^k}\int_{0}^{2^k}f_k(y)\dy= \frac{1}{4},
	\end{align*}
	and hence $\left(\sum_{k}|Mf_k(0)|^q\right)^{\frac{1}{q}}=\infty$. 
\end{rem}

The following lemma provides a scale-uniform pointwise bound for convolutions in terms of the Hardy-Littlewood maximal function. 

\begin{lem}\label{lem:HLmax-uni}
Let $h:\R^d\to \R$ be a function satisfying
$|h(x)|\lec (1+|x|)^{-N}$  
for some $N>d$, and $h_j(x)\eqdefa2^{jd}h(2^j x)$ for $j\in \Z$. Then we have
\begin{equation}\label{eq:maximal-uniform}
   \sup_{j\in\Z} |h_j *f(x)| \lec  Mf(x).
\end{equation}
\end{lem}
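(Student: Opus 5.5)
The plan is to dominate $|h|$ by a radially decreasing integrable majorant and then use the standard fact that convolution with such a kernel is controlled by the maximal function. Set $\Phi(x)\eqdefa C(1+|x|)^{-N}$. Then $|h|\le \Phi$, and $\Phi$ is radial, nonincreasing in $|x|$, and integrable on $\R^d$ because $N>d$. Its rescalings $\Phi_j(x)=2^{jd}\Phi(2^jx)$ satisfy $|h_j*f(x)|\le \Phi_j*|f|(x)$ and $\norm{\Phi_j}_{L^1}=\norm{\Phi}_{L^1}$ for every $j$. It therefore suffices to prove
\[
\Phi_j*|f|(x)\le \norm{\Phi}_{L^1(\R^d)}\,Mf(x)
\]
with a constant independent of $j$. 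Taking the supremum over $j\in\Z$ then gives \eqref{eq:maximal-uniform}.

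The bound will be proved with a dyadic shell decomposition; the layer-cake argument would also work. Fix $j$ and split $\R^d$ into the ball $\{|y|\le 2^{-j}\}$ and the shells $A_l=\{2^{l-1-j}<|y|\le 2^{l-j}\}$ for $l\ge1$. On the ball, $\Phi_j\le C2^{jd}$, so the contribution to $\Phi_j*|f|(x)$ is at most
\[
C\,2^{jd}\int_{B_{2^{-j}}(x)}|f|\lec Mf(x).
\]
On the shell $A_l$, $\Phi_j\le C2^{jd}2^{-(l-1)N}$, so the contribution is at most
\[
C\,2^{jd}2^{-(l-1)N}\int_{B_{2^{l-j}}(x)}|f|\lec 2^{jd}2^{-lN}\,2^{(l-j)d}\,Mf(x)=2^{-l(N-d)}\,Mf(x).
\]
Summing over $l\ge1$ gives a geometric series that converges precisely because $N>d$, and the total is bounded by $C(N,d)\,Mf(x)$, uniformly in $j$.

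There is no genuine obstacle. The only point to watch is that every constant is independent of $j$, and this holds because the scaling factor $2^{jd}$ exactly cancels the volume $|B_{2^{l-j}}|\sim 2^{(l-j)d}$ in each shell. If $Mf(x)=\infty$ there is nothing to prove, so the estimate holds pointwise for every $x$.
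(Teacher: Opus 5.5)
Your proof is correct and takes essentially the paper's route. The paper majorizes $|h|$ by the radially non-increasing integrable function $\sup_{|y|>|x|}|h(y)|$ and cites Stein's bound $\sup_{\varepsilon>0}\Phi_\varepsilon*|f|\le\norm{\Phi}_{L^1}Mf$; you use the explicit majorant $C(1+|x|)^{-N}$ and prove that bound yourself by dyadic shells. One small point: your shell argument gives a constant $C(N,d)$ rather than exactly the $\norm{\Phi}_{L^1(\R^d)}$ written in your reduction (that sharp constant needs the layer-cake argument), but this is immaterial for \eqref{eq:maximal-uniform}.
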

\begin{proof}
The proof follows from Stein \cite[(16) in p.~57]{Stein-93} as $\Phi(x)= \sup_{|y|>|x|} |h(y)|$ is an integrable, radially non-increasing function.
\end{proof}
\begin{rem}
From Lemma \ref{lem:HLmax-uni}, it follows that
\[\sup_{j\in\Z}|\Delta_j f(x)| \lec Mf(x),\quad \sup_{j\in\Z}|\Delta_{\leq j}f(x)| \lec  M f(x).
\]
In contrast, the estimate
	\[ |h *f(x)| \lec\, (M|f|^r(x))^{\frac{1}{r}}\] 
cannot hold when $0<r<1$. A counterexample is $f=\chi_{B(y_0,\ep)}$, where $y_0\neq0$ is chosen so that $h(-y_0)\neq 0$, and $0<\ep\ll|y_0|$. Then, at $x=0$, we have $(M|f|^r(0))^{\frac{1}{r}}\lec \ep^{d/r}$ while $\ep^d\lec |h *f(0)|$.
\end{rem}

The following Lemma \ref{lem:peetre} states that Peetre's maximal function (the left-hand side of \eqref{eq:peetre}) is bounded pointwise by  a power of the Hardy-Littlewood maximal function. Its idea originates from Fefferman-Stein \cite{Fefferman-Stein-1972} and Peetre \cite{Peetre-1975}. A detailed proof under the assumption $f\in \mathcal{S}$ can be found in Triebel \cite[Theorem 1.3.1]{Triebel-1983}, while a version valid for more general $f\in \mathcal{S}'$ is given in Li \cite[Lemma 2.3]{LiDong-2019}. Here, $\mathcal{S}$ denotes the Schwartz space and $\mathcal{S}'$ its dual space of tempered distributions. In Lemma \ref{lem:peetre}, we record a variant of \cite{Triebel-1983,LiDong-2019}, originally stated for functions of $t\in\mathbb R^n$, in a form suitable for functions of $(x,t)\in\mathbb R^d\times\mathbb R$, with the absolute value $|\cdot|$ replaced by a Banach norm $\norm{\cdot}_X$ for functions on $\mathbb R^d$.

\begin{lem}\label{lem:peetre}
Assume that the Fourier transform of a function $f(x,t)$ in the $t$-variable satisfies
\[
\supp \mathcal F_t f \subset \R^d\times B_R,
\qquad
B_R\eqdefa\{\tau\in\R:|\tau|<R\}.
\]
Let $\norm{\cdot}_{X}$ be a Banach norm acting on the $x$-variable,
and set $F(t)=\norm{f(\cdot,t)}_{X}$. 
Then, we have that for fixed $r\in(0,\infty)$,
	\begin{equation}\label{eq:peetre}
		\sup_{s\in \R}\frac{F(t-s)}{1+|Rs|^{\frac{1}{r}}}\lec \left(M|F|^{r}(t)\right)^{\frac{1}{r}},
	\end{equation}
provided that the left-hand side of \eqref{eq:peetre} is finite in the case $0<r<1$.
\end{lem}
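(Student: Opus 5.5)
The plan is to adapt the classical scalar proof of Peetre's inequality (Triebel, Theorem 1.3.1) to the function $F(t)=\norm{f(\cdot,t)}_X$. The Banach norm enters only through the triangle inequality and Minkowski's integral inequality. By rescaling $t\mapsto Rt$, which preserves the Hardy--Littlewood maximal function and the form of the weight $1+|Rs|^{1/r}$, we may assume $R=1$.

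\textbf{Step 1: reproducing formula.} Fix $\psi\in\mathcal S(\R)$ with $\cF\psi=1$ on $B_1$. The support condition on $\mathcal F_t f$ gives $f(x,t)=\int_\R \psi(t-u)f(x,u)\,du$. Differentiating in $t$, $\pd_t f(x,t)=\int\psi'(t-u)f(x,u)\,du$. Minkowski's inequality in $X$ then gives
\[
\norm{\pd_tf(\cdot,t)}_X\le\int|\psi'(t-u)|F(u)\,du .
\]
Hence $F$ is Lipschitz with $|F(t)-F(t')|\le \norm{\pd_t f}_{X}$ evaluated between $t$ and $t'$. More precisely, $|F(t-s)-F(t-s+z)|\le |z|\sup_{|w|\le|z|}G(t-s+w)$, where $G(t)=\norm{\pd_t f(\cdot,t)}_X$.

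\textbf{Step 2: the case $r\ge1$.} This case is direct. From Step 1, $F(t-s)\le\int|\psi(t-s-u)|F(u)\,du$. Using $|\psi(y)|\lec_N(1+|y|)^{-N}$ and the inequality $(1+|s-y|)^{-N}\lec (1+|s|)^{N}(1+|y|)^{-N}$ with suitable bookkeeping, or more simply Lemma \ref{lem:HLmax-uni} applied after translating, we get
\[
F(t-s)\lec (1+|s|)\,MF(t)\le(1+|s|)(M F^r(t))^{1/r}.
\]
This is still not the right power of $1+|s|$, so we use the general argument of Step 3, which covers all $r$.

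\textbf{Step 3: general $r$.} Set $F^*(t)=\sup_s F(t-s)/(1+|s|)^{1/r}$, which is finite by assumption (and automatically finite when $r\ge1$ by Step 2's crude bound). For $0<\delta\le1$ and $|z|\le\delta$, the mean value estimate gives
\[
F(t-s)\le F(t-s+z)+\delta\sup_{|w|\le\delta}G(t-s+w).
\]
Raise this to the power $r$ and average over $|z|\le\delta$:
\[
F(t-s)^r\lec \delta^{-1}\int_{|z|\le\delta}F(t-s+z)^r\,dz+\delta^r\sup_{|w|\le1}G(t-s+w)^r .
\]
Since $[t-s-\delta,t-s+\delta]\subset[t-|s|-1,t+|s|+1]$, the first term is at most $\delta^{-1}(1+|s|)\,MF^r(t)$. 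For the second term we need a bound on $G$ by $F^*$: by Step 1 and the rapid decay of $\psi'$,
\[
G(t-s+w)\le\int|\psi'(t-s+w-u)|(1+|t-u|)^{1/r}\,du\;F^*(t)\lec(1+|s|)^{1/r}F^*(t).
\]
Combining, dividing by $(1+|s|)$, and taking the supremum over $s$ yields
\[
F^*(t)^r\lec\delta^{-1}MF^r(t)+\delta^rF^*(t)^r .
\]
Choosing $\delta$ small absorbs the last term, since $F^*(t)<\infty$, and gives $F^*(t)\lec(MF^r(t))^{1/r}$. This is \eqref{eq:peetre}, because $(1+|s|)^{1/r}\sim1+|s|^{1/r}$.

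\textbf{Main obstacle.} The delicate point is the absorption in Step 3. It requires finiteness of $F^*(t)$, which is exactly the hypothesis in the statement when $0<r<1$. When $r\ge1$, finiteness follows from the bound $F(t-s)\lec(1+|s|)MF(t)$ together with $MF(t)<\infty$; if $MF^r(t)=\infty$ there is nothing to prove. A secondary technical point is justifying differentiation under the integral and the reproducing formula for $X$-valued $f$, which is routine for, say, $f$ Schwartz-valued or bounded with polynomial growth.
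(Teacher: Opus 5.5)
\textbf{Gap for $1<r<\infty$.} Your absorption step in Step 3 requires $F^*(t)=\sup_s F(t-s)(1+|s|)^{-1/r}<\infty$. For $r\ge1$ the lemma gives you no such hypothesis.

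You claim finiteness follows from $F(t-s)\lec(1+|s|)MF(t)$. But when $r>1$ this only gives $F(t-s)(1+|s|)^{-1/r}\lec(1+|s|)^{1-1/r}MF(t)$, which is unbounded in $s$. So for $r>1$ the absorption, and with it your proof, is not justified. For $r=1$ the crude bound is already \eqref{eq:peetre}, so that case is fine.

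Moreover, neither justification you offer for the crude bound produces the factor $(1+|s|)^{1}$:
\begin{enumerate}[(a)]
\item The inequality $(1+|t-s-u|)^{-N}\le(1+|s|)^N(1+|t-u|)^{-N}$ needs $N>1$ for integrability, so it gives $(1+|s|)^N$.
\item Lemma \ref{lem:HLmax-uni} after translation gives $MF(t-s)$. That cannot be bounded by $(1+|s|)MF(t)$ for general $F$, because of averages over short intervals around $t-s$, unless you already use the band-limitedness the way the lemma does.
\end{enumerate}

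\textbf{How to fix it.} Use the paper's direct argument for $r\ge1$, which makes Step 3 unnecessary there. By Jensen (or H\"older) with respect to $|\psi(t-s-u)|\,du$, you get $F(t-s)^r\lec\int|\psi(t-s-u)|F(u)^r\,du$. Then split $\R$ into $\{|t-s-u|\le1+|s|\}$ and the annuli $\{2^{m-1}(1+|s|)<|t-s-u|\le2^m(1+|s|)\}$, $m\ge1$. Each annulus lies in $\{|t-u|\le2^{m+1}(1+|s|)\}$, and on it $|\psi|\lec_N 2^{-Nm}(1+|s|)^{-N}$. Summing gives $F(t-s)^r\lec(1+|s|)MF^r(t)$, which is \eqref{eq:peetre} for every $r\ge1$ with no finiteness assumption.

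\textbf{The case $0<r<1$.} Your Step 3 is correct here; it is Triebel's mean-value-plus-absorption proof. The paper absorbs differently and without derivatives. It writes $F\le\int|h|F^rF^{1-r}$ and bounds $F(u)^{1-r}\le(1+|u|)^{(1-r)/r}(F^*)^{1-r}$, which gives $F^*\lec MF^r\cdot(F^*)^{1-r}$. That route needs only the reproducing formula and Minkowski, and avoids your Lipschitz step for $X$-valued $f$. If you keep your route, it is cleaner to bound $\norm{f(\cdot,t)-f(\cdot,t')}_X\le\int|\psi(t-u)-\psi(t'-u)|F(u)\,du$ by Minkowski than to differentiate in $X$.
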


The boundedness of the left-hand side of \eqref{eq:peetre} is an essential condition and should be verified when the lemma is applied with $0<r<1$. When we bound $\sup_{s\in \R}\frac{F(t-s)}{1+|Rs|^{2}}\lec  MF(t)$ in later sections, we can drop the exponent $2$ and use $r=1$ case (since $\frac{1}{1+|Rs|^2}\lec \frac{1}{1+|Rs|}$), avoiding checking the condition. 
However, when we use the vector-valued version \eqref{eq:tri83-2} of Lemma \ref{lem:tri83-2}, to cover the endpoint cases $p=1$ or $q=1$, it is
sometimes necessary to take $r<1$.

\begin{proof}
We follow the argument in \cite{LiDong-2019}. By translation it suffices to prove the case $t=0$ and by scaling one can assume $R=1$. Since $\supp \mathcal F_t f \subset \R^d\times B_R$, we may choose $h\in \mathcal{S}(\mathbb{R})$ such that $\mathcal{F}_th$ is equal to the appropriate nonzero constant on $B_1$, so that
\begin{align*}
	f(x,s)=\int_\R h(s-u)f(x,u)\du,\quad |h(s)|\leq \frac{C(h,N)}{(1+|s|)^N},
\end{align*}
for any $N>1$.

Consider first the case $r\geq 1$.  For fixed $s$, we decompose $\R=\bigcup_{m=0}^\infty A_m$, where $A_0=\{u:\,|s-u|\leq (1+|s|)\}$ and $A_{m}=\{u:\,2^{m-1}(1+|s|)< |s-u|\leq 2^{m}(1+|s|)\}$ for $m\geq 1$. Also let $B_{m}=\{u:\,|u|\leq 2^{m+1}(1+|s|)\}$ for $m\geq 0$. Note $A_m$ and $B_m$ have different centers, and $A_m\subset B_m$. Then
\begin{align}\label{eq:lem:dyadic}
 F(s)&\leq \sum_{m=0}^{\infty}\int_{A_m} |h(s-u)|F(u)\du \notag\\[3pt]
	&\leq \int_{B_0}|h(s-u)|F(u)\du+\sum_{m=1}^{\infty}\frac{C_N}{2^{N(m-1)}(1+|s|)^{N}}\int_{B_m} F(u)\du\notag\\[3pt]
	&\lec \left(\int_{B_0}F(u)^r\du\right)^{\frac{1}{r}}+ \sum_{m=1}^{\infty}\frac{2^{m+2}(1+|s|)}{2^{N(m-1)}(1+|s|)^{N}} M|F|(0)\notag\\[3pt]
	&\lec (1+|s|)^{\frac{1}{r}} \left(M|F|^{r}(0)\right)^{\frac{1}{r}}+M|F|(0),
\end{align}
where we choose $N$ large enough so that the series converges. Since $M|F|(0)\leq \left(M|F|^{r}(0)\right)^{\frac{1}{r}}$ for $r\geq 1$, we obtain 
\[\frac{F(s)}{(1+|s|)^{\frac{1}{r}}}\lec \left(M|F|^{r}(0)\right)^{\frac{1}{r}},
\]
which proves \eqref{eq:peetre}.

Next we consider the case $0<r<1$.  
\begin{align*}
 F(s)&\leq \int_\R |h(s-u)|F(u)^r\cdot F(u)^{1-r}\du\notag\\[3pt]
	&\leq \int_\R |h(s-u)|F(u)^r\cdot (1+|u|)^{\frac{1-r}{r}}\du\cdot \left(\sup_{u\in \R}\frac{F(u)}{(1+|u|)^{\frac{1}{r}}}\right)^{1-r}\notag\\[3pt]
	&\leq(1+|s|)^{\frac{1-r}{r}}\int_\R |h(s-u)|F(u)^r\cdot (1+|s-u|)^{\frac{1-r}{r}}\du\cdot \left(\sup_{u\in \R}\frac{F(u)}{(1+|u|)^{\frac{1}{r}}}\right)^{1-r}.
\end{align*}
By the same dyadic annular decomposition as in \eqref{eq:lem:dyadic}, we have 
\begin{align*}
	&\,\quad \int_\R |h(s-u)|F(u)^r\cdot (1+|s-u|)^{\frac{1-r}{r}}\du\\[3pt]
	&\lec\int_{B_0}F(u)^r\du+\sum_{m=1}^{\infty}\frac{1}{2^{(N-\frac{1-r}{r})(m-1)}(1+|s|)^{(N-\frac{1-r}{r})}}\int_{B_m} F(u)^r\du\\[3pt]
	&\lec (1+|s|)M|F|^{r}(0)+M|F|^r(0).
\end{align*}
Hence we obtain 
\[F(s)\lec (1+|s|)^{\frac{1}{r}}M|F|^{r}(0)\cdot \left(\sup_{u\in \R}\frac{F(u)}{(1+|u|)^{\frac{1}{r}}}\right)^{1-r}.\]
The desired inequality then follows provided that  $\sup_{u\in \R}\frac{F(u)}{(1+|u|)^{\frac{1}{r}}}$ is finite.
\end{proof}

The following lemma combines Lemmas \ref{lem:FS} and \ref{lem:peetre}, and extends \cite[Theorem 1.6.2]{Triebel-1983}.

\begin{lem}\label{lem:tri83-2}
	Let $0<p<\infty$ and $0<q\leq\infty$, or $p=q=\infty$, and  $0<r<\min (p,q)$.
	Suppose that $f_j$, for $j\in\mathbb{Z}$,  satisfy the assumptions of Lemma \ref{lem:peetre} with radius $R_j$, and $F_j(t)=\norm{f_j(\cdot,t)}_{X}$ for the same $X$-norm. Then, we have 
	\begin{equation}\label{eq:tri83-2}
		\norm{\sup_{s\in \R}\frac{F_j(t-s)}{1+|R_js|^{\frac{1}{r}}}}_{L^{p}(\R;\ell^q(j))}	\lec \norm{F_j(t)}_{L^{p}(\R;\ell^q(j))},
	\end{equation}
provided that $\sup_{s\in \R}\frac{F_j(t-s)}{1+|R_js|^{\frac{1}{r}}}$ is finite for each $j$ in the case $0<r<1$.
\end{lem}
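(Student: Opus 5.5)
The proof will go pointwise in $t$ first and then use the vector-valued maximal inequality. Fix $t\in\R$ and $j\in\Z$. Lemma \ref{lem:peetre}, applied to $f_j$ with radius $R_j$ and the given exponent $r$, gives
\[
\sup_{s\in\R}\frac{F_j(t-s)}{1+|R_js|^{\frac1r}}\lec \big(M|F_j|^r(t)\big)^{\frac1r}.
\]
The implicit constant is independent of $j$, $R_j$ and $t$, because the proof of that lemma reduces to $R=1$ and $t=0$ by scaling and translation. When $0<r<1$, Lemma \ref{lem:peetre} requires the left-hand side to be finite; this is exactly the proviso in the statement, so the pointwise bound holds for every $j$ and every $t$.

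Next we take $L^p(\R;\ell^q(j))$ norms and rescale the exponents. Write $G_j=|F_j|^r$, and set $\tilde p=p/r$ and $\tilde q=q/r$. Since $r<\min(p,q)$, we have $\tilde p>1$ and $\tilde q>1$, where $\tilde q=\infty$ if $q=\infty$ and $\tilde p=\tilde q=\infty$ if $p=q=\infty$. Then
\[
\norm{(MG_j)^{\frac1r}}_{L^p(\ell^q(j))}=\norm{MG_j}_{L^{\tilde p}(\ell^{\tilde q}(j))}^{\frac1r}.
\]
Lemma \ref{lem:FS} now bounds $\norm{MG_j}_{L^{\tilde p}(\ell^{\tilde q})}$ by $\norm{G_j}_{L^{\tilde p}(\ell^{\tilde q})}=\norm{F_j}_{L^p(\ell^q)}^r$. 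This uses Lemma \ref{lem:FS} in the range $1<\tilde p<\infty$, $1<\tilde q\le\infty$ when $p<\infty$, together with Remark \ref{rem2.3} for $\tilde q=\infty$; when $p=q=\infty$ it uses $\tilde p=\tilde q=\infty$. Taking the $1/r$ power yields \eqref{eq:tri83-2}.

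The only point needing care is checking that the rescaled exponents land in the admissible range of Lemma \ref{lem:FS}. The condition $r<\min(p,q)$ is designed to ensure exactly this, and it is why $r<1$ is needed when $p\le1$ or $q\le1$. The only other ingredient is the uniformity of the Peetre constant in $R_j$, which follows from scaling invariance of the Hardy--Littlewood maximal operator.
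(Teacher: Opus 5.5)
Your proposal is correct and follows essentially the same route as the paper: the pointwise Peetre bound \eqref{eq:peetre}, with a constant uniform in $R_j$ by scaling, followed by rewriting the norm in $L^{p/r}(\R;\ell^{q/r}(j))$ and applying the Fefferman--Stein inequality \eqref{eq:FS}. Your explicit check that $p/r$ and $q/r$ fall in the admissible range of Lemma \ref{lem:FS} is exactly what the hypothesis $r<\min(p,q)$ is there to guarantee.
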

\begin{proof}
By \eqref{eq:peetre}, we have 
\[
\norm{\sup_{s\in \R}\frac{F_j(t-s)}{1+|R_js|^{\frac{1}{r}}}}_{L^{p}(\R;\ell^q(j))}	
\lec \norm{\left(M|F_j|^{r}(t)\right)^{\frac{1}{r}}}_{L^{p}(\R;\ell^q(j))}\\[3pt]
=\norm{M|F_j|^{r}(t)}_{L^{\frac{p}{r}}(\R;\ell^\frac{q}{r}(j))}^{\frac{1}{r}},
\]
which together with \eqref{eq:FS} yields
\[ 
\norm{\sup_{s\in \R}\frac{F_j(t-s)}{1+|R_js|^{\frac{1}{r}}}}_{L^{p}(\R;\ell^q(j))}	\lec \norm{|F_j|^{r}(t)}_{L^{\frac{p}{r}}(\R;\ell^\frac{q}{r}(j))}^{\frac{1}{r}}\\[3pt]
=\norm{F_j(t)}_{L^{p}(\R;\ell^q(j))}.
\]
Hence we get \eqref{eq:tri83-2}.
\end{proof}

The following calculus lemma will be useful in later sections.
\begin{lem}
Let $1\leq q<\infty$, $R>1$, $a,b\in\R$, and $c>0$. We have that for $a+\frac{1}{q^2}>0$ and any sequence $A_j$, 
\begin{equation}\label{eq:int-xd1}
\int_0^\infty\Big(\sum_{j\in\mathbb{Z}} x^{a}R^{bj}e^{-cxR^{j}}|A_j|\Big)^q \dx\lec \sum_{j\in\mathbb{Z}}R^{(b-a)jq-j}|A_j|^q.
\end{equation}
\end{lem}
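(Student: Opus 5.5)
The plan is to use a weighted Hölder inequality in the $j$-sum, which reduces the estimate to a one-dimensional integral that can be computed explicitly for each $j$. First I rewrite each summand in the scale-invariant variable $y_j\eqdefa xR^{j}$:
\[
x^{a}R^{bj}e^{-cxR^{j}}|A_j| = R^{(b-a)j}|A_j|\,(xR^j)^{a}e^{-cxR^{j}}.
\]
Set $u_j\eqdefa R^{(b-a)j}|A_j|$. The right-hand side of \eqref{eq:int-xd1} is then $\sum_j u_j^q R^{-j}$. The factor $R^{-j}$ should come out of the substitution $y=xR^j$ in the $x$-integral.

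\emph{Case $q=1$.} Here there is no Hölder step. I exchange sum and integral by Tonelli and substitute $y=xR^j$:
\[
\int_0^\infty (xR^j)^a e^{-cxR^j}\dx = R^{-j}\int_0^\infty y^a e^{-cy}\,{\rm d}y .
\]
This is finite because the hypothesis for $q=1$ reads $a+1>0$.

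\emph{Case $1<q<\infty$.} I fix a small $\eta>0$ and split the weight as $g_jh_j$, where
\[
g_j\eqdefa (xR^j)^{\eta}e^{-\frac c2 xR^j},\qquad h_j\eqdefa (xR^j)^{a-\eta}e^{-\frac c2 xR^j}.
\]
Hölder in $j$ with exponents $q'$ and $q$ gives
\[
\Big(\sum_j u_jg_jh_j\Big)^q\le \Big(\sum_j g_j^{q'}\Big)^{q-1}\sum_j u_j^q h_j^q .
\]

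The key step is the uniform bound $\sup_{x>0}\sum_{j\in\Z}(xR^j)^{\eta q'}e^{-\frac{cq'}{2}xR^j}\lec 1$, with a constant depending on $R,\eta,q,c$. To prove it, I split the indices into those with $xR^j\le 1$ and those with $xR^j>1$.
\begin{itemize}
\item When $xR^j\le1$, I drop the exponential. The terms $(xR^j)^{\eta q'}$ then form a geometric series in $j$ with ratio $R^{\eta q'}>1$, capped by $1$ at the largest index. So this part is bounded by $(1-R^{-\eta q'})^{-1}$. This uses $R>1$ and $\eta>0$.
\item When $xR^j>1$, the values $y=xR^j$ grow at least geometrically, and $y^{\eta q'}e^{-\frac{cq'}{2}y}\lec_N y^{-N}$. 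So this part is again dominated by a convergent geometric series, uniformly in $x$.
\end{itemize}

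With the uniform bound in hand, Tonelli and the substitution $y=xR^j$ give
\[
\int_0^\infty\Big(\sum_j u_jg_jh_j\Big)^q\dx\lec \sum_j u_j^q\int_0^\infty (xR^j)^{(a-\eta)q}e^{-\frac{cq}{2}xR^j}\dx
= \sum_j u_j^q R^{-j}\int_0^\infty y^{(a-\eta)q}e^{-\frac{cq}{2}y}\,{\rm d}y .
\]
The last integral is finite once $(a-\eta)q>-1$, that is, $\eta<a+\frac1q$. Such an $\eta>0$ exists because $a>-\frac1{q^2}\ge-\frac1q$. In fact the argument only needs $a+\frac1q>0$, which is weaker than the stated hypothesis.

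The main (mild) obstacle is choosing $\eta$ so that both requirements hold at once: $\eta>0$ is needed for the uniform summability of the $g_j^{q'}$, while $\eta<a+\frac1q$ is needed for integrability of $y^{(a-\eta)q}$ near $y=0$. All constants depend on $R,a,c,q$, which is consistent with the use of $\lec$.
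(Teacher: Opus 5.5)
Your proof is correct and is essentially the paper's argument. The paper uses the same H\"older split in $j$, with summable factor $[R^je^{-cxR^j}]^{\frac{q-1}{q^2}}$; this is your choice $\eta=\frac{q-1}{q^2}$, and it is exactly where the hypothesis $a+\frac{1}{q^2}>0$ (i.e.\ $(a-\eta)q>-1$) comes from, whereas leaving $\eta$ free, as you do, gives the sharp condition $a+\frac1q>0$, which is also necessary (test with a single nonzero $A_j$).
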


\begin{proof}
It is straightforward to verify that for $r>0$,
\begin{equation}\label{eq:lem-xd4}
	\sup_{x>0} \sum_{j\in\mathbb{Z}}\left[x R^{j}e^{-cx R^{j}} \right]^{r}\lec 1,
\end{equation}
and for $\sigma>-1$,
\begin{equation}\label{eq:lem-xd5}
	\int_0^\infty x^{\sigma} e^{-cxR^{j}}\dx\lec R^{-j(\sigma+1)}.
\end{equation}	

For $q=1$, \eqref{eq:int-xd1} is trivial, so in the following we assume $1<q<\infty$. 
By using H\"{o}lder's inequality, we have
\begin{align*}
\Big(	\sum_{j\in\mathbb{Z}}R^{bj}e^{-cxR^{j}}|A_j|\Big)^q&=\Big(\sum_{j\in\mathbb{Z}}\Big[R^{(b-1)j}\cdot \left(R^{j}e^{-cx R^{j}} \right)^{1-\frac{q-1}{q^2}} |A_j|\Big]\cdot\left[R^{j}e^{-c xR^{j}} \right]^{\frac{q-1}{q^2}}\Big)^q\\[3pt]
&\lec \Big(\sum_{j\in\mathbb{Z}}\Big[R^{(b-1)j}\cdot \left(R^{j}e^{-cx R^{j}} \right)^{1-\frac{q-1}{q^2}} |A_j|\Big]^q\Big)\cdot\Big(\sum_{j\in\mathbb{Z}}\left[R^{j}e^{-cx R^{j}} \right]^{\frac{1}{q}}\Big)^{q-1},
\end{align*}
which together with \eqref{eq:lem-xd4} and \eqref{eq:lem-xd5} yields (using $aq^2+1>0$)
	\begin{align*}
		\int_0^\infty\Big(	\sum_{j\in\mathbb{Z}}x^{a}R^{bj}e^{-cxR^{j}}|A_j|\Big)^q\dx&\lesssim \sum_{j\in\mathbb{Z}}\int_{0}^{+\infty} R^{(bq-1+\frac{1}{q})j} x^{aq-1+\frac{1}{q}}e^{-cxR^j(q-1+\frac{1}{q})} \dx\cdot |A_j|^q \\[3pt]
		&\lesssim \sum_{j\in\mathbb{Z}}\ R^{(b-a)jq-j} |A_j|^q.
	\end{align*}
Hence \eqref{eq:int-xd1}.
\end{proof}

\section{The one-dimensional case}\label{sec:1D}

In this section, we prove Theorem \ref{thm:1D}, the one-dimensional case. We first establish the maximal regularity estimate \eqref{eq:max-reg-1D} in Lemma \ref{lem:max-reg-1D}, and then prove the trace estimate \eqref{eq:trace-1D} in Lemmas \ref{lem:trace-1d-1}-\ref{lem:trace-1d-2}.

\subsection{Maximal regularity estimate}

\begin{lem}\label{lem:max-reg-1D}
Under the assumptions of Theorem \ref{thm:1D}, estimate \eqref{eq:max-reg-1D} holds.
\end{lem}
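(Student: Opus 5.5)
The plan is to work frequency by frequency in time. Taking the Fourier transform in $t$ of \eqref{eq:def-v-1D}, the caloric extension satisfies $\cF_t v(x,\tau)=e^{-x\sqrt{i\tau}}\,\cF_t g(\tau)$, where $\sqrt{i\tau}$ is the principal root, so $\operatorname{Re}\sqrt{i\tau}=\sqrt{|\tau|/2}$. Hence
\[
\cF_t\big(\pd_x^\alpha\pd_t^\beta v\big)(x,\tau)=(-\sqrt{i\tau})^{\alpha}(i\tau)^{\beta}e^{-x\sqrt{i\tau}}\,\cF_t g(\tau).
\]
Writing $g=\sum_k\Delta_k^{\cT}g$ and using $\tilde\theta=1$ on $\supp\theta$ from \eqref{def-tilde}, I get
\[
\pd_x^\alpha\pd_t^\beta v=\sum_k 2^{\frac{kn}{2}}\,K_k(x,\cdot)*\Delta_k^{\cT}g .
\]
Here $K_k(x,\cdot)$ has symbol $m_k(x,\tau)=(2^{-k}i\tau)^{\frac n2}(\text{unimodular factor})\,e^{-x\sqrt{i\tau}}\,\tilde\theta(2^{-k}\tau)$. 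Since $g\in C_c^\infty$, each $\Delta_k^{\cT}g$ is Schwartz. The series converges for each $x>0$, because the factor $e^{-x\sqrt{i\tau}}$ kills high frequencies and $\cF_tg$ is bounded near $\tau=0$.

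\emph{Step 1: kernel bound.} I will show, uniformly in $k\in\Z$ and $x>0$, that
\[
|K_k(x,s)|\lec 2^{k}e^{-cx2^{k/2}}(1+2^k|s|)^{-N}
\]
for any prescribed $N$. Rescale $\tau=2^k\sigma$. Then $\sigma$ ranges over a fixed annulus, and the symbol becomes $(i\sigma)^{n/2}e^{-y\sqrt{i\sigma}}\tilde\theta(\sigma)$ with $y=x2^{k/2}$. Integrating by parts $N$ times in $\sigma$ gives derivatives of $e^{-y\sqrt{i\sigma}}$, which produce factors $y^m$ multiplied by $e^{-y\sqrt{|\sigma|/2}}$. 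These are absorbed through $y^me^{-y\sqrt{3/8}}\lec e^{-cy}$ with a smaller $c$. I expect this to be the main technical point, and it is routine once the rescaling is in place.

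\emph{Step 2: pointwise Peetre control.} Fix $0<r<\min(p,q)$, so that $r<1$ is allowed and the endpoints $p=1$ or $q=1$ are covered. Take $N>1+\frac1r$. Set
\[
P_k(t)\eqdefa\sup_{s}\frac{|\Delta_k^{\cT}g(t-s)|}{1+|2^{k+2}s|^{1/r}} .
\]
Then
\[
|K_k(x,\cdot)*\Delta_k^{\cT}g(t)|\le \int |K_k(x,s)|\,(1+|2^{k+2}s|^{1/r})\,P_k(t)\ds\lec e^{-cx2^{k/2}}P_k(t).
\]
Therefore $|\pd_x^\alpha\pd_t^\beta v(x,t)|\lec\sum_k 2^{\frac{kn}{2}}e^{-cx2^{k/2}}P_k(t)$.

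\emph{Step 3: $L^q$ in $x$, then $L^p$ in $t$.} For fixed $t$, apply \eqref{eq:int-xd1} with $R=2^{1/2}$, $a=0$ (so $a+\frac1{q^2}>0$), $b=n$ and $A_k=P_k(t)$. This yields
\[
\norm{\pd_x^\alpha\pd_t^\beta v(\cdot,t)}_{L^q(\R_+)}\lec\Big(\sum_k\big(2^{(\frac n2-\frac1{2q})k}P_k(t)\big)^q\Big)^{1/q}.
\]
Taking the $L^p$ norm in $t$, I apply Lemma \ref{lem:tri83-2} with $X=\R$ (scalar case), $f_j=2^{(\frac n2-\frac1{2q})j}\Delta^{\cT}_jg$ and $R_j=2^{j+2}$ (note $\supp\cF_t\Delta_j^{\cT}g\subset B_{2^{j+2}}$). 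The finiteness hypothesis required for $r<1$ holds because each $\Delta_j^{\cT}g$ is a bounded Schwartz function. This bounds the right-hand side by $\norm{2^{(\frac n2-\frac1{2q})k}\Delta_k^{\cT}g}_{L^p(\R;\ell^q(k))}=\norm{g}_{\dotc F^{\frac n2-\frac1{2q}}_{p,q}(\R)}$, which is \eqref{eq:max-reg-1D}.
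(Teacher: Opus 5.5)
Your proof is correct and follows the paper's argument essentially step by step: the multiplier formula $\cF_t v=e^{-x\sqrt{i\tau}}\cF_t g$, a rescaled integration-by-parts kernel bound with $e^{-cx2^{k/2}}$ decay, domination by Peetre's maximal function, the calculus lemma \eqref{eq:int-xd1} in $x$, and Lemma \ref{lem:tri83-2} with some $r<1$ to reach the endpoints $p=1$ or $q=1$. The differences are only cosmetic: the paper fixes $r=\frac12$ with kernel decay $(1+|2^ks|^2)^{-2}$ rather than a general $r$ with $N>1+\frac1r$. Separately, on $\supp\tilde\theta$ the lower bound for $\operatorname{Re}\sqrt{i\sigma}$ is $\sqrt{3/16}$ rather than $\sqrt{3/8}$, which does not affect the argument.
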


\begin{proof}
For $v$ defined in \eqref{eq:def-v-1D}, we have (see \cite[Corollary 2.5]{Chen-Liang-Tsai-2025})
\begin{equation}\label{eq:1D-v}
\cF_t v=  e^{-x\sqrt{i\tau}} \cdot \cF_tg.
\end{equation}
Here we take the principal branch of the square root, so that the real part $\hbox{Re} \sqrt{i \tau} = \sqrt{|\tau|/2}$ for $\tau \in \R$. Hence
\begin{align}\label{eq:1D-0}
\pd_x^{\alpha}\pd_t^{\beta}v=\sum_{k\in \Z}\Delta_{k}^{\mathcal{T}}\pd_x^{\alpha}\pd_t^{\beta}v&=\sum_{k\in \Z} \mathcal{F}_{t}^{-1}\left(P(x,\tau) \cdot\theta(2^{-k}\tau)\cdot \cF_t g\right)\notag\\[3pt]
&=\sum_{k\in \Z} \mathcal{F}_{t}^{-1}\left(P(x,\tau) \cdot\tilde{\theta}(2^{-k}\tau)\cdot\theta(2^{-k}\tau)\cdot\cF_t g\right),
\end{align}
where $\Delta_{k}^{\mathcal{T}}$ and $\tilde{\theta}$ are defined in \eqref{def-delta} and \eqref{def-tilde} respectively, and
\begin{equation*}
P(x,\tau)\eqdefa(-\sqrt{i\tau})^{\alpha}\cdot(i \tau)^\beta e^{-x\sqrt{i\tau}}.
\end{equation*}

Now we claim that there exists a constant $c>0$ such that
\begin{equation}\label{eq:1d-1}
|\mathcal{F}_{t}^{-1}\left(P(x,\tau) \cdot \tilde{\theta}(2^{-k}\tau)\right)(t)|\lec \frac{2^k\cdot 2^{\frac{nk}{2}}}{\left(1+|2^k t|^2\right)^2}e^{-cx 2^{\frac{k}{2}}}.
\end{equation}
The reader may refer to \cite[Lemma 2.4]{Bahouri-Chemin-Danchin-2011} for the underlying idea of this estimate. We include a detailed derivation here, and will use similar estimates repeatedly later without further comment.
Notice that
\begin{align*}
\mathcal{F}_{t}^{-1}\left(P(x,\tau) \cdot \tilde{\theta}(2^{-k}\tau)\right)(t)&=\frac{1}{(2\pi)^{1/2}}\int_{\R} e^{i t\cdot\tau}\cdot P(x,\tau)\cdot \tilde{\theta}(2^{-k}\tau) \dtau\\[3pt]
&=\frac{2^k}{(2\pi)^{1/2}}\int_{\R} e^{i2^k t\cdot\tau}\cdot P(x,2^k\tau)\cdot \tilde{\theta}(\tau)  \dtau.
\end{align*}
By repeated integration by parts using $e^{i2^k t\cdot\tau} = (1+|2^k t|^2)^{-1}(1-\pd_{\tau}^2)e^{i2^k t\cdot\tau}$, we obtain
\begin{equation}\label{claim1}
|\mathcal{F}_{t}^{-1}\left(P(x,\tau) \cdot \tilde{\theta}(2^{-k}\tau)\right)(t)|\lec \frac{2^k}{\left(1+|2^k t|^2\right)^2}\cdot  \norm{\left(1-\pd_{\tau}^2\right)^2\left(P(x,2^k\tau)\cdot \tilde{\theta}(\tau)\right)}_{L^{1}(\R)}.
\end{equation}
Since the real part $\hbox{Re}(\sqrt{i2^k\tau})=\frac{2^{\frac{k}{2}}}{\sqrt 2}\sqrt {|\tau|}$ no matter $\tau$ is positive or negative, we deduce
\begin{align*}
|\left(1-\pd_{\tau}^2\right)^2\left(P(x,2^k\tau)\cdot \tilde{\theta}(\tau)\right)|\lec 2^{\frac{nk}{2}}e^{-cx 2^{\frac{k}{2}}}.
\end{align*}
Given the compact support of $\tilde{\theta}$, the corresponding $L^1$ norm satisfies the same bound, which together with \eqref{claim1} leads to \eqref{eq:1d-1}.
	
Therefore, in view of \eqref{eq:1D-0} and \eqref{eq:1d-1}, we have
\begin{equation*}
|\Delta_{k}^{\mathcal{T}}\pd_x^{\alpha}\pd_t^{\beta}v| \lec \int_{\R}\frac{2^k\cdot 2^{\frac{nk}{2}}}{\left(1+|2^k u|^2\right)^2}e^{-cx 2^{\frac{k}{2}}}\cdot|\Delta_k^{\mathcal{T}}g(t-u)|\du\lec2^{\frac{nk}{2}}e^{-cx 2^{\frac{k}{2}}}\cdot \sup_{u\in \R} \frac{|\Delta_k^{\mathcal{T}}g(t-u)|}{1+|2^ku|^2},	
\end{equation*}
which together with \eqref{eq:int-xd1} implies that
\begin{align*}
		\int_0^\infty |\pd_x^{\alpha}\pd_t^{\beta}v|^q \dx &\lec  \int_0^\infty \Big(\sum_{k\in \Z}2^{\frac{nk}{2}}e^{-cx 2^{\frac{k}{2}}}\cdot \sup_{u\in \R} \frac{|\Delta_k^{\mathcal{T}}g(t-u)|}{1+|2^ku|^2}\Big)^q  \dx\\[3pt]
		&\lec \sum_{k\in \Z} 2^{\frac{nkq-k}{2}}\cdot \Big(\sup_{u\in \R} \frac{|\Delta_k^{\mathcal{T}}g(t-u)|}{1+|2^ku|^2}\Big)^q.
	\end{align*}
Since $g\in \dotc {F}^{\frac{n}{2}-\frac{1}{2q}}_{p,q}(\R)$, $\Delta_k^{\mathcal{T}}g$ is bounded for each $k$ (by the Bernstein inequality $\norm{\Delta_k^{\mathcal{T}}g}_{L^\infty}\lec_k \norm{\Delta_k^{\mathcal{T}}g}_{L^p}$), thus the quantity $\sup_{u\in \R} \frac{|\Delta_k^{\mathcal{T}}g(t-u)|}{1+|2^ku|^2}$ is finite. By applying \eqref{eq:tri83-2} with $r=\frac12$, we arrive at \eqref{eq:max-reg-1D}.\end{proof}

\subsection{Caloric trace estimate}
In this subsection, we present the proof of the caloric trace estimate \eqref{eq:trace-1D} in the following three lemmas (Lemma \ref{lem:trace-1d-1}-\ref{lem:trace-1d-2}), each corresponding to a different range of the indices $p,q$.

\begin{lem}\label{lem:trace-1d-1}
Under the assumptions of Theorem \ref{thm:1D} with $1<p,q<\infty$, estimate \eqref{eq:trace-1D} holds.
\end{lem}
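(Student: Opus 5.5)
The plan is to recover each dyadic piece $\Delta_k^{\cT}g$ pointwise from $u\eqdefa\pd_x^n v$. I will integrate $u$ in $x$ against a bump adapted to the parabolic scale $x\sim 2^{-k/2}$, then convolve in $t$ with a kernel localized at frequency $2^k$. Hölder in $x$ and the discrete Fefferman--Stein inequality of Lemma \ref{lem:FS} then finish the estimate.

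\emph{Step 1 (reproducing formula).} By \eqref{eq:1D-v}, $\cF_t u(x,\tau)=(-\sqrt{i\tau})^{n}e^{-x\sqrt{i\tau}}\cF_tg(\tau)$. Fix a nonnegative $w\in C_c^\infty((1,2))$ with $\int w=1$, and set $w_k(x)=2^{k/2}w(2^{k/2}x)$, which is supported in $I_k\eqdefa(2^{-k/2},2^{1-k/2})$. Then
\[
\int_0^\infty w_k(x)\,\cF_tu(x,\tau)\dx = 2^{\frac{nk}{2}}\,m(2^{-k}\tau)\,\cF_tg(\tau),\qquad m(\sigma)\eqdefa(-\sqrt{i\sigma})^n\int_0^\infty w(y)e^{-y\sqrt{i\sigma}}\,{\rm d}y .
\]
The key point is that $m$ does not vanish on $\supp\tilde\theta\subset\{\frac{3}{8}\le|\sigma|\le\frac{16}{3}\}$. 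If needed, I would take $w$ concentrated near $y=1$, so that $\int w(y)e^{-y\sqrt{i\sigma}}{\rm d}y$ is uniformly close to $e^{-\sqrt{i\sigma}}\neq0$ on this compact set. Then $\theta/m$ is smooth and compactly supported. As in the derivation of \eqref{eq:1d-1}, the kernel $K_k\eqdefa\cF_t^{-1}\big(\theta(2^{-k}\tau)/m(2^{-k}\tau)\big)$ satisfies $|K_k(t)|\lec 2^k(1+|2^kt|)^{-2}$. This gives the identity
\[
\Delta_k^{\cT}g(t)=2^{-\frac{nk}{2}}\int_0^\infty w_k(x)\,(K_k*_t u(x,\cdot))(t)\dx .
\]

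\emph{Step 2 (pointwise bound).} Set $F_k(t)\eqdefa\big(\int_{I_k}|u(x,t)|^q\dx\big)^{1/q}$. Moving the absolute value inside and applying Hölder in $x$ (using $\|w_k\|_{L^{q'}}\lec 2^{\frac{k}{2q}}$), I get
\[
|\Delta_k^{\cT}g(t)|\le 2^{-\frac{nk}{2}}\big(|K_k|*\textstyle\int_0^\infty w_k|u|\dx\big)(t)\lec 2^{-\frac{nk}{2}}2^{\frac{k}{2q}}\,(|K_k|*F_k)(t)\lec 2^{(-\frac n2+\frac1{2q})k}MF_k(t).
\]
The last inequality is Lemma \ref{lem:HLmax-uni}. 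Hence $2^{(\frac n2-\frac1{2q})k}|\Delta_k^{\cT}g(t)|\lec MF_k(t)$.

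\emph{Step 3 (summation).} Lemma \ref{lem:FS} applies since $1<p,q<\infty$, and gives
\[
\norm{g}_{\dotc F^{\frac n2-\frac1{2q}}_{p,q}}\lec\norm{MF_k}_{L^p(\R;\ell^q(k))}\lec\norm{F_k}_{L^p(\R;\ell^q(k))}.
\]
The intervals $I_k$ have bounded overlap, since each $x>0$ lies in at most two of them. Therefore $\sum_k F_k(t)^q\le 2\int_0^\infty|u(x,t)|^q\dx$, and so $\norm{F_k}_{L^p(\ell^q)}\lec\norm{\pd_x^nv}_{L^p(\R;L^q(\R_+))}$, which is \eqref{eq:trace-1D}.

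The main obstacle I expect is Step 1: choosing $w$ so that the multiplier $m$ is uniformly nonzero on $\supp\tilde\theta$, and checking the kernel bound for $K_k$. The restriction $1<p,q<\infty$ enters only through the Fefferman--Stein inequality in Step 3. Hölder in $x$ is applied \emph{before} taking the maximal function, which is what allows the scalar-sequence version of Lemma \ref{lem:FS} to be used rather than an $L^q_x$-valued one.
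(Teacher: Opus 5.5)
Your proof is correct, but it follows a different route from the paper. The paper inverts the Poisson multiplier exactly by integrating over the whole half-line, $\cF_tg=4\int_0^\infty x(-\sqrt{i\tau})^{2-n}e^{-x\sqrt{i\tau}}\pd_x^n\cF_tv\,dx$. This gives the kernel bound \eqref{eq:lem-trace-2} with the non-localized weight $x2^{\frac{4-n}{2}k}e^{-cx2^{k/2}}$. Because these weights overlap across all $k$, the paper then argues by duality against $\psi_k\in L^{p'}(\R;\ell^{q'}(k))$ and moves a Peetre maximal function onto $\tilde\Delta_k^{\cT}\psi_k$. It sums the exponential weights in $x$ using H\"older and the calculus inequality \eqref{eq:int-xd1}, and finishes with Lemma \ref{lem:FS} in $L^{p'}(\ell^{q'})$.

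You instead localize in $x$ to the parabolic shell $I_k$ with a compactly supported bump. The cost is a non-explicit multiplier $m$ that must be inverted on $\supp\theta$. Its nonvanishing is in fact automatic for any nonnegative $w$ supported in $(1,2)$: on $\supp\theta$ one has $|{\rm Im}\sqrt{i\sigma}|<\sqrt{4/3}$, so the values $e^{-y\sqrt{i\sigma}}$, $1<y<2$, lie in a sector of opening less than $\pi$ and cannot cancel. The payoff is a direct pointwise bound $2^{(\frac n2-\frac1{2q})k}|\Delta_k^{\cT}g|\lec MF_k$ followed by a trivial bounded-overlap summation. There is no duality, no Peetre lemma and no \eqref{eq:int-xd1}, so this is cleaner in the range $1<p,q<\infty$.

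The paper's set-up buys robustness. The same bound \eqref{M1} is reused for $q=1$ in Lemma \ref{lem32}, where the dual sequence lies in $L^{p'}(\ell^\infty)$ and Lemma \ref{lem:FS} still applies. Your last step fails there, since the Fefferman--Stein inequality is false in $\ell^1$, so you would have to dualize at that point anyway. In addition, the exponential weights $e^{-cx_d2^{j}}$ adapt directly to the higher-dimensional pieces $\Delta^h_{>k/2}$, where the relevant $x_d$-scale is $2^{-j}$ rather than $2^{-k/2}$.
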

\begin{proof}
In view of \eqref{eq:1D-v}, we have
\begin{equation}\label{eq:3-5}
\cF_tg(\tau)=4\int_{0}^\infty x(-\sqrt{i\tau})^{2-n}\cdot e^{-x\sqrt{i\tau}}\cdot \pd_x^n\cF_tv(x,\tau)\dx.
\end{equation}
By a similar argument of \eqref{eq:1d-1}, we get
\begin{align}\label{eq:lem-trace-2}
		\left|\Delta_{k}^{\mathcal{T}}g\right|
		&\lec\left|\int_{0}^\infty \cF_t^{-1}\left(x(-\sqrt{i\tau})^{2-n}\cdot e^{-x\sqrt{i\tau}}\cdot\theta(2^{-k}\tau)\cdot \pd_x^n\cF_tv(x,\tau)\right)\dx\right|\notag\\[3pt]
		&\lec\int_{\R}\int_{0}^\infty\frac{x  2^{\frac{4-n}{2}k}\cdot e^{-cx 2^{\frac{k}{2}}}}{\left(1+|2^k (t-s)|^2\right)^2}\cdot\left|\pd_x^nv(x,s)\right|\dx\ds.
\end{align}

We next use a duality argument. 
For any sequence $\{\psi_k(t)\}_{k\in\mathbb Z}$ satisfying
$$
\|\psi_k(t)\|_{L^{p'}(\mathbb R;\ell^{q'}(k))}<\infty,
\qquad
\frac1p+\frac1{p'}=\frac1q+\frac1{q'}=1,
$$
we have
\begin{equation}\label{Dual1}
	\int_{\R} \Delta_{k}^{\mathcal{T}}g\cdot\psi_{k}(t)\dt=\int_{\R} \Delta_{k}^{\mathcal{T}}g\cdot\tilde{\Delta}_{k}^{\mathcal{T}}\psi_{k}(t)\dt,
\end{equation}
where
\begin{equation*}
	\tilde{\Delta}_{k}^{\mathcal{T}}\psi_{k}\eqdefa\cF_t^{-1}\left(\tilde{\theta}(2^{-k}\tau)\cdot\cF_t\psi_{k}\right).
\end{equation*}
By virtue of  \eqref{eq:lem-trace-2} and \eqref{Dual1}, we infer
\begin{align}
K_1&\eqdefa \sum_{k\in\mathbb{Z}}2^{(\frac{n}{2}-\frac{1}{2q})k}\int_{\R} \Delta_{k}^{\mathcal{T}}g\cdot\psi_{k}(t)\dt\notag\\[3pt]
&\lec\sum_{k\in\mathbb{Z}}\int_{\R}\int_{\R}\int_{0}^{\infty}
\frac{x2^{\frac{4-\frac{1}{q}}{2}k}\cdot e^{-cx 2^{\frac{k}{2}}}}{\left(1+|2^k (t-s)|^2\right)^2}\cdot\left|\pd_x^nv(x,s)\right|\cdot\left|\tilde{\Delta}_{k}^{\mathcal{T}}\psi_{k}(t)\right|\dx\ds\dt\notag\\[3pt]
&\lec\int_{\R}\int_{0}^{\infty}\sum_{k\in\mathbb{Z}}\Big( x2^{\frac{2-\frac{1}{q}}{2}k}\cdot e^{-cx 2^{\frac{k}{2}}}\cdot\sup_{s\in \R} \frac{|\tilde{\Delta}_{k}^{\mathcal{T}}\psi_{k}(t-s)|}{1+|2^ks|^2}\Big)\cdot\left|\pd_x^nv(x,t)\right|\dx\dt.\label{M1}
\end{align} 
By H\"older's inequality and \eqref{eq:int-xd1} with $1<q<\infty$, we deduce
\begin{align*}
	K_1&\lec\int_{\R}\Big(\sum_{k\in\mathbb{Z}}\Big(\sup_{s\in \R} \frac{|\tilde{\Delta}_{k}^{\mathcal{T}}\psi_{k}(t-s)|}{1+|2^ks|^2}\Big)^{q'}\Big)^{\frac{1}{q'}}\cdot\norm{\pd_x^nv(\cdot,t)}_{L^q\left(\R_+\right)}\dt\\[3pt]
	&\lec\norm{\sup_{s\in \R} \frac{|\tilde{\Delta}_{k}^{\mathcal{T}}\psi_{k}(t-s)|}{1+|2^ks|^2}}_{L^{p'}(\R;\ell^{q'}(k))}\cdot\norm{\pd_x^nv}_{L^{p}(\R;L^q\left(\R_+\right))}.
\end{align*}
Thus, applying \eqref{eq:tri83-2} with $r=1$, \eqref{eq:maximal-uniform} and \eqref{eq:FS} in succession,
we obtain
\begin{equation*}
K_1
\lec \norm{\psi_{k}}_{L^{p'}(\R;\ell^{q'}(k))}\cdot\norm{\pd_x^nv}_{L^{p}(\R;L^q\left(\R_+\right))}.
\end{equation*}
By a duality argument, it follows that for $1<p<\infty$ and $1< q<\infty$,
\begin{equation*}
\norm{2^{(\frac{n}{2}-\frac{1}{2q})k} \Delta_{k}^{\mathcal{T}}g}_{L^{p}(\R;\ell^{q}(k))}\lec\norm{\pd_x^nv}_{L^{p}(\R;L^q\left(\R_+\right))},
\end{equation*}
which leads to \eqref{eq:trace-1D}.
\end{proof}

We next deal with two endpoint cases.

\begin{lem}\label{lem32}
Under the assumptions of Theorem \ref{thm:1D} with $1\leq p<\infty$ and $q=1$, estimate \eqref{eq:trace-1D} holds.
\end{lem}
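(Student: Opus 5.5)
For $q=1$ no duality in the $\ell^q$ variable is needed, because the $\ell^1(k)$ sum can be taken directly inside the kernel estimate \eqref{eq:lem-trace-2}. Duality is then used only in the time variable, and the Fefferman--Stein inequality reduces to the scalar estimate \eqref{Maximal1}, which holds for $p'\in(1,\infty]$.

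\emph{Step 1 (sum the kernel).} The representation \eqref{eq:3-5} and the pointwise bound \eqref{eq:lem-trace-2} hold for every $q$, since they do not involve $q$. Multiplying \eqref{eq:lem-trace-2} by $2^{(\frac n2-\frac12)k}$ and summing over $k$, I obtain
\[
\sum_{k\in\Z}2^{(\frac{n}{2}-\frac12)k}|\Delta_k^{\cT}g(t)|\lec\int_{\R}\int_0^\infty K(x,t-s)\,|\pd_x^nv(x,s)|\dx\ds,
\]
where
\[
K(x,s)\eqdefa\sum_{k\in\Z}w_k(x)\,h_k(s),\qquad w_k(x)\eqdefa x2^{\frac k2}e^{-cx2^{\frac k2}},\qquad h_k(s)\eqdefa\frac{2^k}{(1+|2^ks|^2)^2}.
\]
Indeed, the powers combine as $2^{(\frac n2-\frac12)k}\cdot x2^{\frac{4-n}{2}k}=w_k(x)\,2^k$.

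The key structural fact is that $h_k(s)=2^kh(2^ks)$ with $h(s)=(1+s^2)^{-2}$, which satisfies the hypotheses of Lemma \ref{lem:HLmax-uni}. Moreover, by \eqref{eq:lem-xd4} with $R=2^{1/2}$ and $r=1$, we have $\sup_{x>0}\sum_kw_k(x)\lec1$.

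\emph{Step 2 (duality in $t$).} Fix $\psi\in L^{p'}(\R)$ with $\psi\ge0$ and pair the left-hand side of Step 1 with $\psi$. After Fubini, the resulting quantity is bounded by
\[
\int_{\R}\int_0^\infty|\pd_x^nv(x,s)|\,\big(K(x,\cdot)*\psi\big)(s)\dx\ds .
\]
By Lemma \ref{lem:HLmax-uni}, for every $x$,
\[
\big(K(x,\cdot)*\psi\big)(s)\le\sum_kw_k(x)\,(h_k*\psi)(s)\lec\Big(\sup_{x>0}\sum_kw_k(x)\Big)M\psi(s)\lec M\psi(s),
\]
uniformly in $x$. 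Hence the pairing is bounded by
\[
\int_{\R}\norm{\pd_x^nv(\cdot,s)}_{L^1(\R_+)}\,M\psi(s)\ds\le\norm{\pd_x^nv}_{L^p(\R;L^1(\R_+))}\,\norm{M\psi}_{L^{p'}(\R)}.
\]
For $1<p<\infty$ we have $1<p'<\infty$, so \eqref{Maximal1} gives $\norm{M\psi}_{L^{p'}}\lec\norm{\psi}_{L^{p'}}$. For $p=1$ we have $p'=\infty$ and $\norm{M\psi}_{L^\infty}\le\norm{\psi}_{L^\infty}$. Taking the supremum over $\psi$ yields \eqref{eq:trace-1D} with $q=1$. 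For $p=1$ one may instead integrate in $t$ directly, using $\norm{h_k}_{L^1}\lec1$.

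\emph{Main obstacle.} The point needing care is the order of the norms. A naive Minkowski argument produces $L^1_xL^p_t$ rather than the required $L^p_tL^1_x$, which is the wrong direction when $p>1$. This is why I would dualize in $t$ and use the bound $\sup_x|K(x,\cdot)*\psi|\lec M\psi$, which is uniform in $x$. That uniformity rests on the fact that $\sum_kw_k(x)$ is bounded independently of $x$, and this is exactly where the exponent $\frac n2-\frac12$ (the case $q=1$) enters. The only remaining routine check is that \eqref{eq:lem-trace-2} is valid as stated when $n\ge3$; this is the same kernel computation as \eqref{eq:1d-1}.
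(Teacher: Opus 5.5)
Your argument is correct and follows essentially the same route as the paper: duality in $t$ built on the pointwise bound \eqref{eq:lem-trace-2}, the uniform estimate $\sup_{x>0}\sum_k x2^{k/2}e^{-cx2^{k/2}}\lec 1$ from \eqref{eq:lem-xd4}, and a maximal-function bound in time. The only difference is a streamlining. Because you sum $|\Delta_k^{\cT}g|$ over $k$ before pairing, you can test against a single scalar $\psi\in L^{p'}(\R)$ and close with Lemma \ref{lem:HLmax-uni} and the scalar maximal theorem; the paper instead pairs with a sequence $\psi_k\in L^{p'}(\R;\ell^\infty(k))$, inserts $\tilde{\Delta}_k^{\cT}$, and uses Peetre's maximal function and \eqref{eq:FS}, which for $q'=\infty$ reduces to the scalar case anyway.
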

\begin{proof}
The estimate \eqref{M1} is still valid for $q=1$. Hence
\begin{align*}
	K_1&\leq \int_{\R}\int_{0}^{\infty}\sum_{k\in\mathbb{Z}}\Big( x2^{\frac{k}{2}}\cdot e^{-cx 2^{\frac{k}{2}}}\Big)\cdot \sup_{k}\Big(\sup_{s\in \R} \frac{|\tilde{\Delta}_{k}^{\mathcal{T}}\psi_{k}(t-s)|}{1+|2^ks|^2}\Big)\cdot\left|\pd_x^nv(x,t)\right|\dx\dt \\[3pt]
	&\lec \int_{\R} \sup_{k}\Big(\sup_{s\in \R} \frac{|\tilde{\Delta}_{k}^{\mathcal{T}}\psi_{k}(t-s)|}{1+|2^ks|^2}\Big)\cdot\norm{\pd_x^nv(\cdot,t)}_{L^1(\R_+)}\dt,
\end{align*}
which together with \eqref{eq:tri83-2} with $r=1$, \eqref{eq:maximal-uniform} and \eqref{eq:FS} yields that the inequality
\begin{equation*}
	K_1
	\lec \norm{\psi_{k}}_{L^{p'}(\R;\ell^{\infty}(k))}\cdot\norm{\pd_x^nv}_{L^{p}(\R;L^1\left(\R_+\right))}
\end{equation*}
holds for any $\psi_{k}(t)\in L^{p'}(\R;\ell^{\infty}(k))$. Thus for $1\leq p<\infty$ and $q=1$, we achieve
\begin{equation*}
	\norm{2^{(\frac{n}{2}-\frac{1}{2q})k} \Delta_{k}^{\mathcal{T}}g}_{L^{p}(\R;\ell^{q}(k))}\lec\norm{\pd_x^nv}_{L^{p}(\R;L^q\left(\R_+\right))},
\end{equation*}
which leads to \eqref{eq:trace-1D}.
\end{proof}
\begin{lem}\label{lem:trace-1d-2}
Under the assumptions of Theorem \ref{thm:1D} with $1<p\leq\infty$ and $q= \infty$, estimate \eqref{eq:trace-1D} holds.
\end{lem}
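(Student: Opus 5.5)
We argue as in Lemma \ref{lem32}, but with the roles of the $L^q_x$ and $\ell^q$ norms adapted to $q=\infty$. The starting point is the pointwise bound \eqref{eq:lem-trace-2}, which holds independently of $p,q$. Since $q=\infty$, the target exponent is $\frac n2$, so we need
\[
\sup_{k}2^{\frac{nk}{2}}|\Delta_k^{\cT}g(t)|\lec \ldots \quad\text{in } L^p(\R).
\]
For this we do not need duality in $k$ at all; we estimate the supremum directly, using duality only in the trivial form $L^\infty_x$ against $L^1_x$. Multiply \eqref{eq:lem-trace-2} by $2^{\frac{nk}{2}}$ and bound $|\pd_x^n v(x,s)|\le \norm{\pd_x^nv(\cdot,s)}_{L^\infty(\R_+)}=:V(s)$. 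The $x$-integral is then
\[
\int_0^\infty x\,2^{2k}e^{-cx2^{k/2}}\dx\lec 2^{k},
\]
which gives
\[
2^{\frac{nk}{2}}|\Delta_k^{\cT}g(t)|\lec \int_\R \frac{2^k}{(1+|2^k(t-s)|^2)^2}V(s)\ds\lec MV(t)
\]
by Lemma \ref{lem:HLmax-uni}. This bound is uniform in $k$.

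Taking the supremum over $k$ and then the $L^p$ norm in $t$, the maximal inequality \eqref{Maximal1} (valid for $1<p\le\infty$) yields
\[
\norm{2^{\frac{nk}{2}}\Delta_k^{\cT}g}_{L^p(\R;\ell^\infty(k))}\lec \norm{V}_{L^p(\R)}=\norm{\pd_x^nv}_{L^p(\R;L^\infty(\R_+))},
\]
which is \eqref{eq:trace-1D} with $q=\infty$. For $p=\infty$ the left side is the $\dotc B^{n/2}_{\infty,\infty}=\dotc F^{n/2}_{\infty,\infty}$ norm, consistent with the convention fixed after Definition \ref{def:spaces}. The case $p=\infty$ only uses the trivial bound $\norm{MV}_{L^\infty}\le\norm{V}_{L^\infty}$.

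The one point needing care is the derivation of the kernel bound in \eqref{eq:lem-trace-2} from \eqref{eq:3-5} for a general $n$ (including $n>2$, where $(-\sqrt{i\tau})^{2-n}$ is singular at $\tau=0$). This singularity is harmless because $\theta(2^{-k}\tau)$ localizes $|\tau|\sim2^k$. The integration-by-parts argument of \eqref{eq:1d-1} then applies, giving the factor $2^{\frac{(2-n)k}{2}}\cdot 2^k$. One must also justify exchanging the $x$-integral with $\cF_t^{-1}$, which is fine for $g\in C_c^\infty$ since $\pd_x^nv$ decays. The restriction $p>1$ enters only through \eqref{Maximal1}; this explains why $(p,q)=(1,\infty)$ is excluded.
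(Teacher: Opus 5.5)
Your argument is correct, and it takes a more direct route than the paper's proof. The paper reinserts $\tilde\theta(2^{-k}\tau)\theta(2^{-k}\tau)$ so that the kernel acts on $\Delta_k^{\cT}\pd_x^nv$ instead of $\pd_x^nv$. It then bounds the $s$-integral by the Peetre maximal function $\sup_s |\Delta_k^{\cT}\pd_x^nv(x,t-s)|/(1+|2^ks|^2)$ and integrates out $x$ after taking the $L^\infty(\R_+)$ norm. Lemma \ref{lem:peetre} with $r=1$ and Lemma \ref{lem:HLmax-uni} then give $M^{\cT}M^{\cT}\norm{\pd_x^nv(\cdot,t)}_{L^\infty(\R_+)}$, and \eqref{Maximal1} finishes.

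You instead take the $L^\infty_x$ norm of $\pd_x^nv$ directly in \eqref{eq:lem-trace-2}. Integrating the explicit weight in $x$ leaves the $L^1$-normalized kernel $2^k(1+|2^k\cdot|^2)^{-2}$, and you apply Lemma \ref{lem:HLmax-uni} once. The bound $MV(t)$ is uniform in $k$, so the $\ell^\infty(k)$ norm costs nothing. This needs no Peetre maximal function, no second maximal operator, and no extra frequency localization. For the $\ell^\infty(k)$ statement at hand, your version is shorter and uses strictly fewer tools.

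What the paper's detour buys is a frequency-diagonal intermediate bound,
$2^{\frac{nk}{2}}|\Delta_k^{\cT}g(t)|\lec \sup_s \norm{\Delta_k^{\cT}\pd_x^nv(\cdot,t-s)}_{L^\infty(\R_+)}/(1+|2^ks|^2)$,
which ties the $k$-th block of $g$ to the $k$-th block of $\pd_x^nv$. That form would adapt if the $\ell^\infty(k)$ norm were replaced by a summation norm, and it is the template the paper reuses in Lemmas \ref{lem:trace-d3} and \ref{lem:trace-d3b}.

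One minor point: your opening sentence says you argue as in Lemma \ref{lem32}, but your proof in fact uses no duality, which is the right choice for $q=\infty$.
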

\begin{proof}
By a similar argument of \eqref{eq:lem-trace-2}, we have that for $q=\infty$,
\begin{align}\label{eq:lem-trace-3}
		\left|\Delta_{k}^{\mathcal{T}}g\right|
		&\lec\left|\int_{0}^\infty \cF_t^{-1}\left(x(-\sqrt{i\tau})^{2-n}\cdot e^{-x\sqrt{i\tau}}\cdot\tilde{\theta}(2^{-k}\tau)\cdot\theta(2^{-k}\tau)\cdot \pd_x^n\cF_tv(x,\tau)\right)\dx\right|\notag\\[3pt]
		&\lec\int_{\R}\int_{0}^\infty\frac{  x2^{\frac{4-n}{2}k}\cdot e^{-cx 2^{\frac{k}{2}}}}{\left(1+|2^k s|^2\right)^2}\cdot\left|\Delta_k^{\cT}\pd_x^nv(x,t-s)\right|\dx\ds\notag\\[3pt]
		&\lec\int_{0}^\infty  x2^{\frac{2-n}{2}k}\cdot e^{-cx 2^{\frac{k}{2}}} \left(\sup_{s\in\R}\frac{ \left|\Delta_k^{\cT}\pd_x^nv(x,t-s)\right|}{1+|2^k s|^2}\right)\dx\notag\\[3pt]
		&\lec 2^{-\frac{n}{2}k}\norm{\sup_{s\in\R}\frac{ \left|\Delta_k^{\cT}\pd_x^nv(\cdot,t-s)\right|}{1+|2^k s|^2}}_{L^\infty(\R_+)}.
\end{align}
Applying \eqref{eq:peetre} with $r=1$ and \eqref{eq:maximal-uniform}, we obtain
\begin{equation}\label{Dual2}
\sup_{s\in \R}\frac{|\Delta_k^{\cT}\pd_{x}^nv(x,t-s)|}{1+|2^{k}s|^2}\lesssim M^{\cT}M^{\cT} \pd_x^n v(x,t),
\end{equation}
where $M^{\cT}$ is the Hardy--Littlewood maximal operator acting on the temporal variable $t$. By substituting \eqref{Dual2} into \eqref{eq:lem-trace-3}, we achieve
\begin{equation*}
	\norm{2^{\frac{n}{2}k} \Delta_{k}^{\mathcal{T}}g}_{\ell^{\infty}(k)}
	\lec \norm{M^{\cT}M^{\cT} \pd_x^n v(\cdot,t)}_{L^\infty(\R_+)}\lec M^{\cT}M^{\cT} \norm{\pd_x^n v(\cdot,t)}_{L^\infty(\R_+)},
\end{equation*}
which together with \eqref{Maximal1} implies that for $1<p\leq\infty$,
\begin{align*}
	\norm{2^{\frac{n}{2}k} \Delta_{k}^{\mathcal{T}}g}_{L^p(\R;\ell^{\infty}(k))}\lec \norm{\pd_x^nv}_{L^{p}(\R;L^\infty\left(\R_+\right))}.
\end{align*}
Hence \eqref{eq:trace-1D} holds for $1<p\leq\infty$ and $q=\infty$.
\end{proof}

\section{The higher-dimensional case}\label{sec:DD}

In this section we prove Theorem \ref{thm:dd-max-reg} and Theorem \ref{thm:dd-trace}.

\subsection{Maximal regularity estimates}
In this subsection we prove Theorem \ref{thm:dd-max-reg} only in the case $\beta=0$ and $n=|\alpha|$, while for $\beta>0$, the same conclusion follows by the fact that $\pd_t v-\Delta v=0$.
For the caloric extension $v$ defined in \eqref{eq:def-v}, as in \eqref{eq:1D-v}, we have by \cite[Corollary 2.5]{Chen-Liang-Tsai-2025}
\begin{equation}\label{eq:v-g-hat}
    \hat{v}(\xi',x_d,\tau)=  e^{-x_d\sqrt{|\xi'|^2+i\tau}} \cdot \hat{g}(\xi',\tau), 
\end{equation}
and hence
\begin{equation*}
 \pd_x^{\alpha}\Delta_{m}^h\Delta_{k}^{\mathcal{T}}v=\mathcal{F}_{x',t}^{-1}\left(P(\xi',x_d,\tau) \cdot\varphi(2^{-m}\xi')\cdot\theta(2^{-k}\tau)\cdot \hat{g}\right),
\end{equation*}
where
\begin{equation*}
	P(\xi',x_d,\tau)\eqdefa\prod_{j=1}^{d-1}(i\xi_j)^{\alpha_j} \cdot(-\sqrt{|\xi'|^2+i\tau})^{\alpha_d}\cdot e^{-x_d\sqrt{|\xi'|^2+i\tau}}.
\end{equation*}
Now, we decompose 
\begin{equation}\label{eq:vmk1}
	\pd_x^{\alpha}v=\sum_{m,k\in\mathbb{Z}}\pd_x^{\alpha}\Delta_{m}^h\Delta_{k}^{\mathcal{T}}v=I_1+I_2,
\end{equation}
where
\begin{align}\label{eq:vmk2}
	I_1&\eqdefa \sum_{m,k\in\mathbb{Z},k<2m}\pd_x^{\alpha}\Delta_{m}^h\Delta_{k}^{\mathcal{T}}v
	=\sum_{m\in\mathbb{Z}}\pd_x^{\alpha}\Delta_{m}^h\Delta_{<2m}^{\mathcal{T}}v\notag\\[3pt]
	&=\sum_{m\in\mathbb{Z}}\,\frac{1}{(2\pi)^{\frac{d}{2}}}\int_{\R^{d-1}\times\R}\Phi_{m}^{h}(x'-y',x_d,t-s)\cdot \Delta_{m}^h\Delta_{< 2m}^{\cT} g(y',s)\dy'\ds,
\end{align}
with
\begin{equation*}
 \Phi_{m}^{h}(x,t)\eqdefa\mathcal{F}_{x',t}^{-1}\left(P(\xi',x_d,\tau)\cdot\tilde{\varphi}(2^{-m}\xi')\cdot\tilde{\theta}_0(2^{-2m}\tau)\right),
\end{equation*}
and
\begin{align}\label{eq:vmk3}
	I_2&\eqdefa\sum_{m,k\in\mathbb{Z},k\geq 2m}\pd_x^{\alpha}\Delta_{m}^h\Delta_{k}^{\mathcal{T}}v
	=\sum_{k\in\mathbb{Z}}\pd_x^{\alpha}\Delta_{\leq \lfloor k /2\rfloor}^h\Delta_{k}^{\mathcal{T}}v\notag\\[3pt]
	&=\sum_{k\in\mathbb{Z}}\,\frac{1}{(2\pi)^{\frac{d}{2}}}\int_{\R^{d-1}\times\R}\Phi_{k}^{\mathcal{T}}(x'-y',x_d,t-s)\cdot \Delta_{k}^{\mathcal{T}} g(y',s)\dy'\ds,
\end{align}
with
\begin{equation*}
	\Phi_{k}^{\mathcal{T}}(x,t)\eqdefa\mathcal{F}_{x',t}^{-1}\left(P(\xi',x_d,\tau)\cdot\varphi_0(2^{-\lfloor k /2\rfloor}\xi')\cdot\tilde{\theta}(2^{-k}\tau)\right).
\end{equation*}
\begin{lem}\label{lem:exp-decay}
	There exists a constant $c>0$ such that
\begin{equation}\label{eq:est-Phih}
	\norm{\Phi_{m}^{h}(\cdot,x_d,t)}_{L^{1}(\R^{d-1})}\lec\frac{2^{m(2+n)}} { (1+|2^{2m} t|^2)^2} \cdot e^{-cx_d2^{m}},
\end{equation}
and
\begin{equation}\label{eq:est-Phit}
	\norm{\Phi_{k}^{\mathcal{T}}(\cdot,x_d,t)}_{L^{1}(\R^{d-1})}\lec  \frac{2^{\frac{k(2+n)}{2}}} { (1+|2^{k} t|^2)^2} \cdot e^{-cx_d2^{\frac{k}{2}}}.
\end{equation}
\end{lem}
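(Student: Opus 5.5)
The plan is a rescaling argument followed by weighted $L^1$ control from integration by parts, as in the derivation of \eqref{eq:1d-1}, now applied jointly in $(\xi',\tau)$.

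\emph{Estimate \eqref{eq:est-Phih}.} Substitute $\xi'=2^m\eta$ and $\tau=2^{2m}\sigma$. The parabolic homogeneity $\sqrt{|2^m\eta|^2+i2^{2m}\sigma}=2^m\sqrt{|\eta|^2+i\sigma}$ gives
\[
\Phi_m^h(x',x_d,t)=2^{m(d-1)}2^{2m}\,2^{mn}\,\Psi(2^mx',2^mx_d,2^{2m}t),
\]
where
\[
\Psi(y',y_d,s)=\mathcal F^{-1}_{\eta,\sigma}\Big(\prod_{j<d}(i\eta_j)^{\alpha_j}\big(-\sqrt{|\eta|^2+i\sigma}\big)^{\alpha_d}e^{-y_d\sqrt{|\eta|^2+i\sigma}}\,\tilde\varphi(\eta)\tilde\theta_0(\sigma)\Big).
\]
The $L^1(\R^{d-1})$ norm in $x'$ absorbs the factor $2^{m(d-1)}$. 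It therefore suffices to show
\[
\|\Psi(\cdot,y_d,s)\|_{L^1(\R^{d-1})}\lec (1+|s|^2)^{-2}e^{-cy_d}.
\]
For this, bound $(1+|y'|^2)^{N}(1+|s|^2)^2|\Psi|$ with $N>\frac{d-1}{2}$. Integrating by parts with $(1-\Delta_\eta)^N(1-\pd_\sigma^2)^2$ reduces this to an $L^1_{\eta,\sigma}$ bound on derivatives of the symbol over the compact set $\supp\tilde\varphi\times\supp\tilde\theta_0$. On that set $|\eta|\ge c_0>0$. Hence $|\eta|^2+i\sigma$ stays in a compact subset of $\{\mathrm{Re}>0\}$, away from the branch point, and $\sqrt{\cdot}$ is smooth there with $\mathrm{Re}\sqrt{|\eta|^2+i\sigma}\ge c_1>0$. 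Each derivative of $e^{-y_d\sqrt{\cdot}}$ produces a polynomial factor in $y_d$, and $y_d^je^{-c_1y_d}\lec e^{-c y_d}$ for any $c<c_1$. This yields the exponential factor $e^{-cx_d2^m}$.

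\emph{Estimate \eqref{eq:est-Phit}.} Write $\ell=\lfloor k/2\rfloor$ and scale $\xi'=2^{k/2}\eta$, $\tau=2^k\sigma$. The cutoff $\varphi_0(2^{-\ell}\xi')$ becomes $\varphi_0(2^{k/2-\ell}\eta)$, where $2^{k/2-\ell}\in\{1,\sqrt2\}$; these are two fixed smooth compactly supported functions, so the bounds are uniform in $k$. The symbol is now supported in $|\eta|\lec1$ and $\sigma\in\supp\tilde\theta$, so $|\sigma|\ge c_0$. Then $|\eta|^2+i\sigma$ stays in a compact set avoiding $0$, and since $\mathrm{Re}\sqrt z\ge0$ with equality only on the negative axis, which is never reached, we have $\mathrm{Re}\sqrt{\cdot}\ge c_1>0$ by compactness. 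The smoothness argument and the integration by parts then go through exactly as before. The resulting factor is $2^{\frac k2(d-1)}2^k2^{\frac{kn}{2}}$; the first factor is absorbed by the $L^1(\R^{d-1})$ norm, leaving the stated bound.

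\emph{Main obstacle.} The only delicate point is keeping uniform lower bounds on $\mathrm{Re}\sqrt{|\eta|^2+i\sigma}$ and smoothness of the square root on the rescaled supports. The concern is the low-frequency parts, $\tilde\theta_0$ in $\sigma$ in the first case and $\varphi_0$ in $\eta$ in the second. Each is compensated by the other variable being bounded away from zero, and this is exactly why the decomposition \eqref{eq:vmk2}--\eqref{eq:vmk3} is split along $k<2m$ versus $k\ge2m$.
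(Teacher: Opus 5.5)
Your proposal is correct and takes essentially the same route as the paper. Both arguments rescale $(\xi',\tau)\mapsto(2^m\eta,2^{2m}\sigma)$ (resp.\ $(2^{k/2}\eta,2^k\sigma)$), integrate by parts with $(1-\Delta_\eta)^N(1-\pd_\sigma^2)^2$, use a uniform positive lower bound on $\mathrm{Re}\sqrt{|\eta|^2+i\sigma}$ over the compact rescaled support, and then integrate the $(1+|y'|^2)^{-N}$ weight in $x'$. Where the paper cites the explicit inequality $\mathrm{Re}\sqrt{|\xi'|^2+i\tau}\ge\frac{1}{\sqrt2}(|\xi'|^4+|\tau|^2)^{1/4}$, you argue by compactness, and you also spell out the $\lfloor k/2\rfloor$ cutoff and the polynomial-in-$y_d$ factors that the paper leaves implicit.
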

\begin{proof}
The proof is similar to \eqref{eq:1d-1}.
By scaling, we have
\begin{align*}
 \Phi_{m}^h(x,t)&=\frac{1}{(2\pi)^{\frac{d}{2}}}\int_{\R^{d-1}\times\R} e^{i(x'\cdot\xi'+t\cdot\tau)}P(\xi',x_d,\tau)\cdot\tilde{\varphi}(2^{-m}\xi')\cdot\tilde{\theta}_0(2^{-2m}\tau)\dxi'\dtau\\[3pt]
	&=\frac{2^{m(d+1)}}{(2\pi)^{\frac{d}{2}}}\int_{\R^{d-1}\times\R} e^{i(2^{m}x'\cdot\xi'+2^{2m}t\cdot\tau)}P(2^{m}\xi',2^{2m}\tau)\cdot\tilde{\varphi}(\xi')\cdot\tilde{\theta}_0(\tau)\dxi'\dtau.
\end{align*}
By $E\eqdefa e^{i(2^{m}x'\cdot\xi'+2^{2m}t\cdot\tau)} =\left(\frac{1-\Delta_{\xi'}}{1+|2^m x'|^{2}} \right)^d\left(\frac{1-\pd_{\tau}^2}{1+|2^{2m} t|^2}\right)^2E$ and repeated integration by parts,  we obtain
\begin{align*}
 |\Phi_{m}^h(x,t)|&\lec \frac{2^{m(d+1)}}{\left(1+|2^m x'|^{2}\right)^d\cdot\left(1+|2^{2m} t|^2\right)^2}\notag\\[3pt]
&\quad\times \norm{\left(1-\Delta_{\xi'}\right)^d\left(1-\pd_{\tau}^2\right)^2\left(P(2^{m}\xi',2^{2m}\tau)\cdot\tilde{\varphi}(\xi')\cdot\tilde{\theta}_0(\tau)\right)}_{L^{1}(\R^{d-1}\times\R)}.
\end{align*}
Notice that there exists a constant $c>0$ such that
\begin{align*}
\hbox{Re}(\sqrt{|\xi'|^2+i\tau})\geq \frac{1}{\sqrt 2}(|\xi'|^4+|\tau|^2)^{1/4}\geq 2c	
\end{align*}
on  $\supp\tilde{\varphi}\times\supp\tilde{\theta}_0$. Therefore, we infer that
\begin{equation*}
	|\Phi_{m}^h(x,t)| \lec \frac{2^{m(d+1+n)}} {\left(1+|2^m x'|^{2}\right)^d \cdot \left(1+|2^{2m} t|^2\right)^2} \cdot e^{-cx_d2^{m}},
\end{equation*}
which leads to \eqref{eq:est-Phih}. Analogously, we can derive \eqref{eq:est-Phit}.
\end{proof}

\begin{lem}[The estimate of $I_1$]\label{lem:I1}
Under the assumptions of Theorem \ref{thm:dd-max-reg}, for $I_1$ defined in \eqref{eq:vmk2}, we have that
for $n-\frac{1}{q}=0$ $($i.e.~$n=q=1)$,
\begin{equation}\label{I10}
	\norm{I_1}_{L^{p}\big(\R;\,L^{q}(\R_+^{d})\big)}\lec \norm{g}_{L^{p}\big(\R;\,\dotc{B}_{q,q}^{n-\frac{1}{q}}(\R^{d-1})\big)},
\end{equation}
and for $n-\frac{1}{q}<0$ $($i.e. $n=0)$,
\begin{equation}\label{eq:I11}
 \norm{I_1}_{L^{p}\big(\R;\,L^{q}(\R_+^{d})\big)}\lec \norm{g}_{\dotc {F}^{\frac{n}{2}-\frac{1}{2q}}_{p,q}\big(\R;\,L^{q}(\R^{d-1})\big)},
\end{equation}
and for $n-\frac{1}{q}>0$,
\begin{equation}\label{eq:I12}
	\norm{I_1}_{L^{p}\big(\R;\,L^{q}(\R_+^{d})\big)}\lec \norm{g}_{\dotc {F}^{\frac{n}{2}-\frac{1}{2q}}_{p,q}\big(\R;\,L^{q}(\R^{d-1})\big)}+\norm{g}_{L^{p}\big(\R;\,\dotc{B}_{q,q}^{n-\frac{1}{q}}(\R^{d-1})\big)}.
\end{equation}
\end{lem}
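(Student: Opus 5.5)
The plan is to estimate $I_1$ one horizontal frequency block at a time, using the kernel bound \eqref{eq:est-Phih}. The $x_d$-integration is handled by \eqref{eq:int-xd1}, and the time variable by the Peetre machinery of Lemma \ref{lem:tri83-2} or by a duality argument. The three regimes in the lemma differ only in how the time cut-off $\Delta^{\cT}_{<2m}$ is removed at the end.

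\emph{Step 1: a common estimate.} Set $G_m\eqdefa\Delta_m^h\Delta_{<2m}^{\cT}g$. By Young's inequality in $x'$ and \eqref{eq:est-Phih},
\[
\norm{\Phi_m^h*G_m(\cdot,x_d,t)}_{L^q(\R^{d-1})}\lec 2^{mn}e^{-cx_d2^m}A_m(t),
\qquad
A_m(t)\eqdefa\sup_{s\in\R}\frac{\norm{G_m(\cdot,t-s)}_{L^q(\R^{d-1})}}{1+|2^{2m}s|^2}.
\]
The factor $2^{2m}$ in \eqref{eq:est-Phih} is absorbed by the $s$-integration. Next I use the triangle inequality in $L^q_{x'}$ over $m$. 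Then I apply \eqref{eq:int-xd1} with $R=2$, $a=0$, $b=n$ (note $a+\frac1{q^2}>0$). This gives
\[
\norm{I_1(t)}_{L^q(\R^d_+)}\lec \Big(\sum_m 2^{(n-\frac1q)mq}A_m(t)^q\Big)^{1/q}.
\]
Now I take the $L^p$ norm in $t$ and apply \eqref{eq:tri83-2} with some $r<\min(p,q)$ (so $r<1$ is allowed). The time spectrum of $G_m$ lies in $|\tau|\lec 2^{2m}$, and the required finiteness follows from the Bernstein inequality as in Lemma \ref{lem:max-reg-1D}. The result is
\[
\norm{I_1}_{L^pL^q}\lec \norm{2^{(n-\frac1q)m}\norm{\Delta_m^h\Delta_{<2m}^{\cT}g}_{L^q}}_{L^p(\R;\ell^q(m))}.
\]
This holds for all $1\le p,q<\infty$.

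\emph{Step 2: $n-\frac1q>0$.} I write $\Delta_{<2m}^{\cT}=1-\sum_{k\ge 2m}\Delta_k^{\cT}$. The term with $1$ gives exactly $\norm{g}_{L^p\dotc B^{n-\frac1q}_{q,q}}$. For the remaining terms I use $\norm{\Delta^h_m\Delta_k^{\cT}g}_{L^q_{x'}}\lec\norm{\Delta_k^{\cT}g}_{L^q_{x'}}$, which holds because the kernel of $\Delta_m^h$ has uniformly bounded $L^1$ norm. Then
\[
2^{(n-\frac1q)m}\norm{\Delta^h_m\Delta_k^{\cT}g}_{L^q}\lec 2^{(n-\frac1q)(m-\frac k2)}\,2^{(\frac n2-\frac1{2q})k}\norm{\Delta_k^{\cT}g}_{L^q}.
\]
Since $m\le k/2$, the weight $2^{(n-\frac1q)(m-k/2)}$ is geometrically decaying. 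Young's inequality for sequences, applied pointwise in $t$, bounds the $\ell^q(m)$ norm by the $\ell^q(k)$ norm. This yields the $\dotc F$ term and proves \eqref{eq:I12}.

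\emph{Step 3: the case $n=0$.} Here the sign of the exponent is reversed, so I decompose instead $\Delta_{<2m}^{\cT}=\sum_{k<2m}\Delta_k^{\cT}$. The same estimate holds, with the weight $2^{-\frac1q(m-\frac k2)}$ now decaying because $m>k/2$. This gives \eqref{eq:I11}.

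\emph{Step 4: the case $n=q=1$.} This is the main obstacle, since no decay is available. I avoid the cut-off by absorbing $\Delta_{<2m}^{\cT}$ into the kernel. On the support of $\tilde\varphi$ we have $\Delta_m^h=\Delta_m^h\tilde\Delta_m^h$, and $\tilde\theta_0\theta_0=\theta_0$. Hence $I_1=\sum_m\Psi_m*\Delta_m^hg$, where $\Psi_m$ has symbol $P\,\varphi(2^{-m}\xi')\theta_0(2^{-2m}\tau)$ and satisfies the same bound as \eqref{eq:est-Phih}. Integrating in $x$ with $n=1$ gives
\[
\norm{I_1(t)}_{L^1(\R^d_+)}\lec\sum_m K_m*a_m(t),
\qquad a_m=\norm{\Delta_m^hg}_{L^1_{x'}},
\]
where $K_m(s)=2^{2m}(1+|2^{2m}s|^2)^{-2}$ has uniformly bounded $L^1$ norm.

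For $p=1$ the conclusion is immediate. For $p>1$ I argue by duality. For $\psi\in L^{p'}$,
\[
\int\sum_mK_m*a_m\,\psi=\sum_m\int a_m\,(\tilde K_m*\psi)\le\int\Big(\sum_ma_m\Big)M\psi,
\]
using \eqref{eq:maximal-uniform}. Then \eqref{Maximal1} gives \eqref{I10}.
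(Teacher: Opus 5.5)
Your proposal is correct and follows the paper's proof essentially step for step. Steps 1--3 reproduce the paper's Young/\eqref{eq:int-xd1}/Peetre reduction to $\|2^{(n-\frac1q)m}\|\Delta^h_m\Delta^{\cT}_{<2m}g\|_{L^q}\|_{L^p(\R;\ell^q(m))}$, followed by the same two geometric sums. Step 4 is the same duality-plus-maximal-function argument, only streamlined: you absorb $\Delta^{\cT}_{<2m}$ into the kernel and dualize in $t$ alone, rather than against $\psi\in L^{p'}(\R;L^\infty(\R^d_+))$ with $M^{\cT}M^{\cT}$.

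Two small fixes are needed. First, the kernel $\Psi_m$ convolved with $\Delta^h_mg$ should carry $\tilde\varphi(2^{-m}\xi')$, not $\varphi(2^{-m}\xi')$. Second, in Step 1 take $r\in[\tfrac12,\min(p,q))$, e.g.\ $r=\tfrac12$ as the paper does, so that the weight $1+|2^{2m}s|^2$ from \eqref{eq:est-Phih} dominates $1+|R_ms|^{1/r}$ in \eqref{eq:tri83-2}.
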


\begin{proof}
We first deal with the case $n-\frac{1}{q}=0$ (i.e.~$n=q=1$) by a duality argument. 
As in \eqref{Dual1}, we get that for any function $\psi(x,t)\in L^{p'}\big(\R;\,L^{\infty}(\R_+^{d})\big)$,
\begin{align*}
	K_2&\eqdefa \left|\int_{\R^d_+\times\R}I_1\cdot\psi\dx\dt\right|\notag\\[3pt]
	&\lec\left|\sum_{m\in\mathbb{Z}}\int_{\R^d_+\times\R}\left(\int_{\R^{d-1}\times\R}\Phi_{m}^{h}(x'-y',x_d,t-s)\cdot \Delta_{m}^h g(y',s)\dy'\ds\right) \cdot\Delta_{< 2m}^{\cT}\psi\dx\dt\right|,
\end{align*}	
which together with \eqref{eq:est-Phih} yields (denoting $s=t'$, $t-s=-s'$, and dropping prime)
\begin{align}
K_2&\lec\sum_{m\in\mathbb{Z}}\int_{\R_+\times\R}2^{m} e^{-cx_d2^{m}}\cdot\norm{\Delta_{m}^h g}_{L^{1}(\R^{d-1})}\cdot \sup_{s\in\R}\frac{\norm{\Delta_{< 2m}^{\cT}\psi(\cdot,x_d,t-s)}_{L^{\infty}(\R^{d-1})}} {1+|2^{2m} s|^2}\dx_d\dt\notag\\[3pt]
&\lec \sum_{m\in\mathbb{Z}}\int_{\R}\norm{\Delta_{m}^h g}_{L^{1}(\R^{d-1})}\cdot\sup_{s\in\R}\frac{\norm{\Delta_{< 2m}^{\cT}\psi(\cdot,t-s)}_{L^{\infty}(\R^{d}_+)}} {1+|2^{2m} s|^2}\dt.\label{Dual3}
\end{align}
By \eqref{eq:maximal-uniform} and \eqref{eq:peetre} with $r=1$, we have
\begin{equation}\label{Dual4}
\sup_{s\in\R}\frac{\norm{\Delta_{< 2m}^{\cT}\psi(\cdot,t-s)}_{L^{\infty}(\R^{d}_+)}} {1+|2^{2m} s|^2}\lec M^{\cT}M^{\cT}\norm{\psi(\cdot,t)}_{L^{\infty}(\R^{d}_+)}.
\end{equation}
By substituting \eqref{Dual4} into \eqref{Dual3} and then using \eqref{Maximal1}, we achieve that for $1\leq p<\infty$,
\begin{align*}
K_2&\lec \norm{g}_{L^{p}\big(\R;\,\dotc{B}_{1,1}^{0}(\R^{d-1})\big)}\cdot\norm{M^{\cT}M^{\cT} \norm{\psi(\cdot,t)}_{L^{\infty}(\R^{d}_+)}}_{L^{p'}(\R)}\\[3pt]
&\lec \norm{g}_{L^{p}\big(\R;\,\dotc{B}_{1,1}^{0}(\R^{d-1})\big)}\cdot\norm{\psi}_{L^{p'}\big(\R;\,L^{\infty}(\R_+^{d})\big)},
\end{align*}
which leads to \eqref{I10} by duality.

Now we assume $n-\frac{1}{q}\neq0$.	By \eqref{eq:vmk2}, \eqref{eq:est-Phih} and Young's inequality, we obtain
\begin{align}
\norm{I_1(\cdot,x_d,t)}_{L^q(\R^{d-1})}&\lec \sum_{m\in\mathbb{Z}}\int_{\R}\norm{\Phi_{m}^{h}(\cdot,x_d,s)}_{L^{1}(\R^{d-1})}\cdot\norm{\Delta_{m}^{h}\Delta_{< 2m}^{\cT}g(\cdot,t-s)}_{L^{q}(\R^{d-1})} \ds\notag\\[3pt]
&\lec \sum_{m\in\mathbb{Z}}2^{mn} e^{-cx_d2^{m}} \cdot \sup_{s\in\R}\frac{\norm{\Delta_{m}^{h}\Delta_{< 2m}^{\cT}g(\cdot,t-s)}_{L^{q}(\R^{d-1})}} {1+|2^{2m} s|^2}.\label{eq:I_1-est1}
\end{align}
The above sup is bounded for each $m$, since $g\in L^{p}\big(\R;\,\dotc{B}_{q,q}^{n-\frac{1}{q}}(\R^{d-1})\big)$ if $n-\frac{1}{q}>0$ and $g\in \dotc {F}^{\frac{n}{2}-\frac{1}{2q}}_{p,q}\big(\R;\,L^{q}(\R^{d-1})\big)$ if $n-\frac{1}{q}<0$. Indeed, if $n-\frac{1}{q}>0$, by Young's inequality,
	\begin{equation*}
		\norm{\norm{\Delta^{h}_{m}\Delta^{\cT}_{<2m} g}_{L^q(\R^{d-1})}}_{L^{\infty}(\R)}\lec_{m} \norm{\norm{\Delta^{h}_{m} g}_{L^q(\R^{d-1})}}_{L^{p}(\R)}\lec_{m}\norm{g}_{L^{p}\big(\R;\,\dotc{B}_{q,q}^{n-\frac{1}{q}}(\R^{d-1})\big)},
	\end{equation*}
and if $n-\frac{1}{q}<0$, applying Young's inequality and H\"older's inequality in $\ell^1$ gives
\begin{align*}
\norm{\norm{\Delta^{h}_{m}\Delta^{\cT}_{<2m} g}_{L^q(\R^{d-1})}}_{L^{\infty}(\R)}&\lec_{m} \norm{\sum_{j<2m} 2^{-(\frac{n}{2}-\frac{1}{2q})j}\cdot 2^{(\frac{n}{2}-\frac{1}{2q})j}\norm{\Delta^{\cT}_{j} g}_{L^q(\R^{d-1})}}_{L^{p}(\R)}\\[3pt]
&\lec_{m}\norm{g}_{\dotc {F}^{\frac{n}{2}-\frac{1}{2q}}_{p,q}\big(\R;\,L^{q}(\R^{d-1})\big)}.
\end{align*}
Estimate \eqref{eq:I_1-est1},
together with \eqref{eq:tri83-2} with $r=\frac12$ and \eqref{eq:int-xd1},  implies that
\begin{align}\label{eq:lem-I1}
\norm{I_1}_{L^{p}\big(\R;\,L^{q}(\R_+^{d})\big)}&= \norm{\left(\int_0^{\infty}\norm{I_1(\cdot,x_d,t)}_{L^{q}(\R^{d-1})}^{q}\dx_d\right)^{\frac{1}{q}}}_{L^p(\R)}\notag\\[3pt]
&\lec \norm{\left(\sum_{m\in\mathbb{Z}}2^{mnq-m} \Big(\sup_{s\in\R}\frac{\norm{\Delta_{m}^{h}\Delta_{< 2m}^{\cT}g(\cdot,t-s)}_{L^{q}(\R^{d-1})}} {1+|2^{2m} s|^2}\Big)^q\right)^{\frac{1}{q}}}_{L^p(\R)}\notag\\[3pt]
&\lec \norm{2^{m(n-\frac{1}{q})}\norm{\Delta^{h}_{m}\Delta^{\cT}_{<2m} g}_{L^q(\R^{d-1})}}_{L^{p}(\R;\ell^q(m))}.
\end{align}
In what follows, we shall bound the RHS of \eqref{eq:lem-I1} separately in the case $n-\frac{1}{q}<0$ and $n-\frac{1}{q}>0$.

\textit{Case 1.}
If $n-\frac{1}{q}<0$, we get
\begin{align*}
2^{m(n-\frac{1}{q})}\norm{\Delta^{h}_{m}\Delta^{\cT}_{<2m} g}_{L^q(\R^{d-1})}&\lec 2^{m(n-\frac{1}{q})}\norm{\Delta^{\cT}_{<2m}g}_{L^q(\R^{d-1})}\notag\\[3pt]
&\lec\sum_{j<2m}2^{(m-\frac{j}{2})(n-\frac{1}{q})}\cdot2^{\frac{j}{2}(n-\frac{1}{q})}\norm{\Delta^{\cT}_{j}g}_{L^q(\R^{d-1})},
\end{align*}
which together with \eqref{eq:lem-I1} and the Young's convolution inequality for $\ell^q$ implies that
\begin{equation*}
\norm{I_1}_{L^{p}\big(\R;\,L^{q}(\R_+^{d})\big)}
\lec\norm{2^{\frac{j}{2}(n-\frac{1}{q})}\norm{\Delta^{\cT}_j g}_{L^q(\R^{d-1})}}_{L^{p}(\R;\ell^q(j))}.
\end{equation*}
Hence \eqref{eq:I11}.

\textit{Case 2.}
If $n-\frac{1}{q}>0$, we get
\begin{align*}
&\quad\ 2^{m(n-\frac{1}{q})}\norm{\Delta^{h}_{m}\Delta^{\cT}_{<2m} g}_{L^q(\R^{d-1})}\notag\\[3pt]
&\lec 2^{m(n-\frac{1}{q})}\Big(\norm{\Delta^{h}_{m}g}_{L^q(\R^{d-1})}+\norm{\Delta^{h}_{m}\Delta^{\cT}_{\geq2m}g}_{L^q(\R^{d-1})}\Big)\notag\\[3pt]
&\lec 2^{m(n-\frac{1}{q})}\Big(\norm{\Delta^{h}_{m}g}_{L^q(\R^{d-1})}+\norm{\Delta^{\cT}_{\geq2m}g}_{L^q(\R^{d-1})}\Big)\notag\\[3pt]
&\lec2^{m(n-\frac{1}{q})}\norm{\Delta^{h}_{m}g}_{L^q(\R^{d-1})}+\sum_{j\geq2m}2^{(m-\frac{j}{2})(n-\frac{1}{q})}\cdot2^{\frac{j}{2}(n-\frac{1}{q})}\norm{\Delta^{\cT}_{j}g}_{L^q(\R^{d-1})},
\end{align*}
which together with \eqref{eq:lem-I1} and Young's convolution inequality for $\ell^q$ implies that
\begin{align*}
\norm{I_1}_{L^{p}\big(\R;\,L^{q}(\R_+^{d})\big)}
&\lec\norm{2^{m(n-\frac{1}{q})}\norm{\Delta^{h}_m g}_{L^q(\R^{d-1})}}_{L^{p}(\R;\ell^q(m))}\notag\\[3pt]
&\quad\ +\norm{2^{\frac{j}{2}(n-\frac{1}{q})}\norm{\Delta^{\cT}_j g}_{L^q(\R^{d-1})}}_{L^{p}(\R;\ell^q(j))}.
\end{align*}
Hence \eqref{eq:I12}.
\end{proof}

\begin{lem}[The estimate of $I_2$] \label{lem:I2}
Under the assumptions of Theorem \ref{thm:dd-max-reg}, for $I_2$ defined in \eqref{eq:vmk3}, we have 
\begin{equation}\label{eq:I2}
\norm{I_2}_{L^{p}\big(\R;\,L^{q}(\R_+^{d})\big)}\lec \norm{g}_{\dotc {F}^{\frac{n}{2}-\frac{1}{2q}}_{p,q}\big(\R;\,L^{q}(\R^{d-1})\big)}.
\end{equation}
\end{lem}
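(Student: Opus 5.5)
I will follow the route used for the $q\neq$ endpoint part of $I_1$, i.e.\ the argument leading to \eqref{eq:lem-I1}, but with the kernel bound \eqref{eq:est-Phit} in place of \eqref{eq:est-Phih}. By \eqref{eq:vmk3} and Young's inequality in $x'$, for fixed $(x_d,t)$,
\begin{align*}
\norm{I_2(\cdot,x_d,t)}_{L^q(\R^{d-1})}
&\lec \sum_{k\in\Z}\int_\R \norm{\Phi_k^{\cT}(\cdot,x_d,s)}_{L^1(\R^{d-1})}\,\norm{\Delta_k^{\cT}g(\cdot,t-s)}_{L^q(\R^{d-1})}\ds\\
&\lec \sum_{k\in\Z} 2^{\frac{nk}{2}}e^{-cx_d2^{\frac k2}}\,\sup_{s\in\R}\frac{\norm{\Delta_k^{\cT}g(\cdot,t-s)}_{L^q(\R^{d-1})}}{1+|2^ks|^2}.
\end{align*}
The factor $2^k$ from \eqref{eq:est-Phit} is absorbed by $\int_\R 2^k(1+|2^ks|^2)^{-1}\ds\lec 1$, after pulling out the Peetre-type supremum.

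Next I take the $L^q(\R_+)$ norm in $x_d$ and apply the calculus lemma \eqref{eq:int-xd1}. I use $R=2^{1/2}$, $a=0$ (so $a+\frac1{q^2}>0$), $b=n$ and $A_k$ equal to the supremum above. This gives
\[
\norm{I_2(\cdot,t)}_{L^q(\R^d_+)}\lec \Big(\sum_{k}2^{(\frac n2-\frac1{2q})kq}\Big(\sup_{s}\frac{\norm{\Delta_k^{\cT}g(\cdot,t-s)}_{L^q}}{1+|2^ks|^2}\Big)^q\Big)^{1/q}.
\]
Taking the $L^p$ norm in $t$, the right side is the $L^p(\R;\ell^q(k))$ norm of weighted Peetre maximal functions of $F_k(t)=2^{(\frac n2-\frac1{2q})k}\norm{\Delta_k^{\cT}g(\cdot,t)}_{L^q(\R^{d-1})}$. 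Each $F_k$ has temporal frequency support in $|\tau|\lec 2^k$, and the Banach norm here is $X=L^q(\R^{d-1})$.

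The main point is the endpoint case $p=1$ or $q=1$, where I need Lemma \ref{lem:tri83-2} with some $r<\min(p,q)$. I will take $r=\frac12$, using $(1+|2^ks|^2)^{-1}\le(1+|2^ks|^{2})^{-1}$ with exponent $\frac1r=2$, exactly as in Lemma \ref{lem:max-reg-1D}. This requires checking that $\sup_s F_k(t-s)/(1+|2^ks|^2)$ is finite for each $k$. That follows because $\norm{\Delta_k^{\cT}g}_{L^q}$ is bounded in $t$. This boundedness holds by Bernstein/Young in $t$, giving $\lec_k$ of its $L^p_t$ norm, which is finite for $g\in C_c^\infty$, or directly from membership in the $\dotc F$ space.

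Lemma \ref{lem:tri83-2} then bounds the expression by $\norm{F_k}_{L^p(\R;\ell^q(k))}=\norm{g}_{\dotc F^{\frac n2-\frac1{2q}}_{p,q}(\R;L^q(\R^{d-1}))}$, which is \eqref{eq:I2}. No case split in $n-\frac1q$ is needed, since in $I_2$ the horizontal frequencies are dominated by the temporal ones ($|\xi'|^2\lesssim|\tau|$), so only the temporal scale $2^{k/2}$ enters.
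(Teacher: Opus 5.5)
Your proposal is correct and follows essentially the same route as the paper's proof. You apply Young's inequality with the kernel bound \eqref{eq:est-Phit} to reach the Peetre-type supremum, integrate in $x_d$ via \eqref{eq:int-xd1} (with $R=2^{1/2}$, $a=0$, $b=n$), and then apply \eqref{eq:tri83-2} with $r=\frac12$ after checking that the supremum is finite for each $k$.
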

\begin{proof}
By \eqref{eq:vmk3}, \eqref{eq:est-Phit} and Young's inequality, we obtain
\begin{align*}
\norm{I_2(\cdot,x_d,t)}_{L^q(\R^{d-1})}&\lec \sum_{k\in\mathbb{Z}}\int_{\R}\norm{\Phi_{k}^{\cT}(\cdot,x_d,s)}_{L^{1}(\R^{d-1})}\cdot\norm{\Delta_{k}^{\cT}g(\cdot,t-s)}_{L^{q}(\R^{d-1})} \ds\\[3pt]
&\lec \sum_{k\in\mathbb{Z}}2^{\frac{kn}{2}} e^{-cx_d2^{\frac{k}{2}}} \cdot \sup_{s\in\R}\frac{\norm{\Delta_{k}^{\cT}g(\cdot,t-s)}_{L^{q}(\R^{d-1})}} {1+|2^{k} s|^2}.
\end{align*}
Since $g\in\dotc {F}^{\frac{n}{2}-\frac{1}{2q}}_{p,q}\big(\R;\,L^{q}(\R^{d-1})\big)$, the above sup is 
bounded for each $k$. The above estimate
together with \eqref{eq:tri83-2} with $r=\frac12$ and \eqref{eq:int-xd1}  implies that 
\begin{align*}
&\,\quad \norm{\left(\int_0^{\infty}\norm{I_2(\cdot,x_d,t)}_{L^{q}(\R^{d-1})}^{q}\dx_d\right)^{\frac{1}{q}}}_{L^p(\R)}\\[3pt]
&\lec \norm{\left(\sum_{k\in\mathbb{Z}}2^{\frac{knq-k}{2}} \Big(\sup_{s\in\R}\frac{\norm{\Delta_{k}^{\cT}g(\cdot,t-s)}_{L^{q}(\R^{d-1})}} {1+|2^{k} s|^2}\Big)^q\right)^{\frac{1}{q}}}_{L^p(\R)}\\[3pt]
&\lec \norm{2^{k(\frac{n}{2}-\frac{1}{2q})}\norm{\Delta^{\cT}_{k} g}_{L^q(\R^{d-1})}}_{L^{p}(\R;\ell^q(k))}.
\end{align*}
Hence \eqref{eq:I2}.
\end{proof}

We are now in a position to prove Theorem \ref{thm:dd-max-reg}.

\begin{proof}[Proof of Theorem \ref{thm:dd-max-reg}]
By \eqref{eq:vmk1}, Lemma \ref{lem:I1}, and Lemma \ref{lem:I2}, we obtain \eqref{eq:regu-est1} and \eqref{eq:regu-est2}. This completes the proof.
\end{proof}

\subsection{Caloric trace estimates}

In this subsection we present the proof of Theorem \ref{thm:dd-trace}, which follows from Lemma \ref{lem:trace-d0} and Lemma \ref{lem:trace-d0b}, bounding the two terms in the left-hand side of \eqref{thm:dd-trace-eq1} separately.

\begin{lem}\label{lem:trace-d0}
Let $d\geq 2$. Suppose that $g(x',t)\in C_c^\infty(\R^{d-1}\times \R)$, and $v(x,t)$ is given by \eqref{eq:def-v}. Then, for $1<p<\infty$ and $1\leq q\leq \infty$, as well as for $p=q=1$ and $p=q=\infty$, and for every integer $n\geq 0$, we have
\begin{equation}\label{eq:trace1-1}
	\norm{2^{(\frac{n}{2}-\frac{1}{2q})k}\|\Delta_{\leq \frac{k}{2}}^{h}\Delta_{k}^{\cT} g\|_{L^{q}(\R^{d-1})}}_{L^p(\R;\ell^q(k))}
		\lec \norm{\pd_{x_d}^nv}_{L^{p}\big(\R;\,L^{q}(\R_+^{d})\big)}.
\end{equation}
Moreover, if $n>\frac{1}{q}$, then
	\begin{equation}\label{eq:trace1-2}
		\norm{2^{(\frac{n}{2}-\frac{1}{2q})k}\|\Delta_{>\frac{k}{2}}^{h}\Delta_{k}^{\cT} g\|_{L^{q}(\R^{d-1})}}_{L^p(\R;\ell^q(k))}
		\lec \norm{\pd_{x_d}^nv}_{L^{p}\big(\R;\,L^{q}(\R_+^{d})\big)}.
	\end{equation}
\end{lem}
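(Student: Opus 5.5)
We mimic the one-dimensional argument of Lemmas \ref{lem:trace-1d-1}--\ref{lem:trace-1d-2}, now with the symbol $\lambda\eqdefa\sqrt{|\xi'|^2+i\tau}$. By \eqref{eq:v-g-hat}, $\pd_{x_d}^n\hat v=(-\lambda)^n e^{-x_d\lambda}\hat g$. Since $\int_0^\infty x_d\,e^{-2x_d\lambda}\dx_d=\frac{1}{4\lambda^2}$, we obtain the reproducing identity
\[
\hat g(\xi',\tau)=4\int_0^\infty x_d(-\lambda)^{2-n}e^{-x_d\lambda}\,\pd_{x_d}^n\hat v(\xi',x_d,\tau)\dx_d .
\]
This is the analogue of \eqref{eq:3-5}.

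\textbf{Low horizontal frequencies, \eqref{eq:trace1-1}.} We apply $\Delta^h_{\leq k/2}\Delta^{\cT}_k$ to the identity. On the support $|\xi'|\lec 2^{k/2}$, $|\tau|\sim 2^k$ we have $|\lambda|\sim 2^{k/2}$ and $\mathrm{Re}\,\lambda\gtrsim 2^{k/2}$. As in Lemma \ref{lem:exp-decay}, the kernel of $x_d(-\lambda)^{2-n}e^{-x_d\lambda}\varphi_0(2^{-\lfloor k/2\rfloor}\xi')\tilde\theta(2^{-k}\tau)$ then has $L^1(\R^{d-1})$ norm bounded by
\[
\frac{x_d\,2^{\frac{(4-n)k}{2}}e^{-cx_d2^{k/2}}}{(1+|2^kt|^2)^2}.
\]
This bound holds for every $n\geq0$, because $|\lambda|$ is bounded away from $0$ after rescaling. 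Hence
\[
\|\Delta^h_{\leq k/2}\Delta^{\cT}_k g(\cdot,t)\|_{L^q(\R^{d-1})}\lec\int_\R\int_0^\infty\frac{x_d2^{\frac{(4-n)k}{2}}e^{-cx_d2^{k/2}}}{(1+|2^k(t-s)|^2)^2}\|\pd_{x_d}^nv(\cdot,x_d,s)\|_{L^q(\R^{d-1})}\dx_d\ds .
\]
This is exactly \eqref{eq:lem-trace-2} with $|\pd_x^n v(x,s)|$ replaced by the scalar function $V(x_d,s)=\|\pd_{x_d}^nv(\cdot,x_d,s)\|_{L^q(\R^{d-1})}$, whose $L^p_tL^q_{x_d}$ norm equals the right-hand side of \eqref{eq:trace1-1}. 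The cases then go through verbatim:
\begin{itemize}
\item For $1<p<\infty$, $1\le q<\infty$ we use the duality of Lemmas \ref{lem:trace-1d-1} and \ref{lem32}. For $q=1$ we use the $\ell^\infty$ dual together with $\sum_k x2^{k/2}e^{-cx2^{k/2}}\lec1$.
\item For $q=\infty$ we follow Lemma \ref{lem:trace-1d-2}. Here we insert $\tilde\theta$ to write the integrand with $\Delta^{\cT}_k$ applied to $\pd^n_{x_d}v$, and use Peetre's inequality \eqref{eq:peetre} for the $X=L^\infty(\R^{d-1})$-valued function.
\item For $p=q=1$ the duality pairing is with $L^\infty(\R;\ell^\infty)$, so no maximal function is needed: one only needs $\sup_k$ of $\int 2^k(1+|2^k s|^2)^{-2}|\psi_k|\,ds\lec\|\psi\|_\infty$.
\item The case $p=q=\infty$ follows from the $q=\infty$ computation with the $L^\infty_t$ norm, avoiding $M$.
\end{itemize}

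\textbf{High horizontal frequencies, \eqref{eq:trace1-2}.} Decompose $\Delta^h_{>k/2}\Delta^{\cT}_k g=\sum_{m>k/2}\Delta^h_m\Delta^{\cT}_kg$. For $m\ge\lfloor k/2\rfloor+1$ we have $|\lambda|\sim 2^m$ and $\mathrm{Re}\,\lambda\gtrsim2^m$. The same identity gives, with the kernel now localized at scale $2^{-m}$ in $x'$ and $2^{-k}$ in $t$,
\[
\|\Delta^h_m\Delta^{\cT}_kg(\cdot,t)\|_{L^q}\lec\int_\R\int_0^\infty\frac{x_d2^{(2-n)m}2^{k}e^{-cx_d2^m}}{(1+|2^k(t-s)|^2)^2}\,V_m(x_d,s)\dx_d\ds,
\]
where $V_m$ is $V$ with $\Delta^h_m$ inserted (bounded by $V$ via Young's inequality). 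Multiplying by $2^{(\frac n2-\frac1{2q})k}$ and setting $m=k/2+j$ with $j\ge0$, the effective weight relative to the scale-$2^m$ trace bound is $2^{(\frac n2-\frac1{2q})k}\cdot2^{-(n-\frac1q)m}=2^{-(n-\frac1q)j}$. This produces a geometric factor that is summable precisely when $n>\frac1q$. Thus for each fixed $j$ we run the argument of the previous paragraph (duality plus the Fefferman--Stein inequality, Lemma \ref{lem:tri83-2}, \eqref{eq:int-xd1} with exponential scale $2^m$ in $x_d$) to get a bound $\lec2^{-(n-\frac1q)j}\|\pd^n_{x_d}v\|_{L^pL^q}$, and then sum over $j$.

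\textbf{Main obstacle.} The main obstacle is this last step. The $x_d$-integration must be performed at horizontal scale $2^m$ while the time maximal function is at scale $2^k$. One must also check that the $\ell^q(k)$ structure survives when $m$ and $k$ are coupled only through $j$. For $q=1$ or $q=\infty$, additional care is needed to use the sup over $k$ rather than the Fefferman--Stein inequality.
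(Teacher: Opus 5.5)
Your proposal is correct. For \eqref{eq:trace1-1} it is essentially the paper's argument. The only difference is the weight in the reproducing identity: the paper uses $x_d^{\gamma}$ with $\gamma=\lfloor 1/q\rfloor$, while you take $\gamma=1$ for all $q$. This is harmless there, for three reasons:
\begin{itemize}
\item \eqref{eq:int-xd1} only needs $a=1>-1/q'^2$;
\item for $q=1$ you have $\sum_k x_d2^{k/2}e^{-cx_d2^{k/2}}\lec 1$;
\item for $q=\infty$ one still has $\int_0^\infty x_d2^{(2-n)\frac k2}e^{-cx_d2^{k/2}}\dx_d\sim 2^{-\frac{nk}{2}}$.
\end{itemize}

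\textbf{The difference is in \eqref{eq:trace1-2}.} The paper first sums the kernel bounds \eqref{eq:hg2} over all horizontal indices $j>\frac k2$ via \eqref{eq:gamma-q-n}. This needs $1+\gamma-n\le 0$ and costs a factor $x_d^{-\ep}$. It then runs a single duality argument with localization $e^{-\frac c2 x_d2^{k/2}}$, exactly as for $K_{31}$, applying \eqref{eq:int-xd1} with $a=\gamma-\ep$. This is also why the paper switches to $\gamma=0$ for $q>1$: with $\gamma=1$, \eqref{eq:gamma-q-n} would force $n\ge 2$ and lose $n=1$.

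You instead proceed offset by offset:
\begin{itemize}
\item fix $j=m-\lfloor k/2\rfloor\ge 1$;
\item integrate in $x_d$ at scale $2^{-m}$, which produces $2^{-(n-\frac1q)m}$;
\item gain $2^{-(n-\frac1q)j}$ from the mismatch with the weight $2^{(\frac n2-\frac1{2q})k}$.
\end{itemize}
This works with $\gamma=1$ for every $q$, including $n=1$ with $q>1$. It has no $\ep$-loss, and it exhibits $n>\frac1q$ as exactly the summability of a geometric series. The price is one duality estimate per offset plus a final sum in $j$.

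\textbf{Your flagged obstacle is only bookkeeping.} For fixed $j$, the map $k\mapsto \lfloor k/2\rfloor+j$ is two-to-one. Let $\mathcal S_k(t)$ be the time-maximal quantity attached to $\psi_k$. Apply \eqref{eq:int-xd1} with $R=2$, $a=1$, $b=2-\frac1q$ and exponent $q'$, so that $(b-a)q'-1=0$, to
\[
A_m=\mathcal S_{2(m-j)}+\mathcal S_{2(m-j)+1}.
\]
This gives $\sum_k \mathcal S_k^{q'}$ with a constant independent of $j$. The Fefferman--Stein step acts on the time index $k$ alone, so it is unaffected by the reindexing.

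The endpoint cases need nothing extra either:
\begin{itemize}
\item For $q=1$ the same computation gives $2^{-(n-1)j}\sup_k\mathcal S_k\sum_m x_d2^me^{-cx_d2^m}\lec 2^{-(n-1)j}\sup_k\mathcal S_k$.
\item For $q=\infty$, integrating in $x_d$ directly bounds $2^{\frac{nk}{2}}\norm{\Delta^h_{\lfloor k/2\rfloor+j}\Delta^{\cT}_kg(\cdot,t)}_{L^\infty(\R^{d-1})}$ by $2^{-nj}M^{\cT}\norm{\pd_{x_d}^nv(\cdot,t)}_{L^\infty(\R^d_+)}$, uniformly in $k$.
\end{itemize}
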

As in the one-dimensional case, we divide the proof into the following three lemmas (Lemma \ref{lem:trace-d1}-\ref{lem:trace-d3}) corresponding to different ranges of the indices $p,q$.

\begin{lem}\label{lem:trace-d1}
Under the assumptions of Lemma \ref{lem:trace-d0} with $1<p,q<\infty$, estimates \eqref{eq:trace1-1} and \eqref{eq:trace1-2} hold.
\end{lem}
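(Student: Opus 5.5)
The plan is to follow the proof of Lemma \ref{lem:trace-1d-1} and work on the Fourier side in $(x',t)$. Write $\rho=\sqrt{|\xi'|^2+i\tau}$. By \eqref{eq:v-g-hat} we have $\pd_{x_d}^n\hat v=(-\rho)^n e^{-x_d\rho}\hat g$. Since $\int_0^\infty x_d e^{-2x_d\rho}\,{\rm d}x_d=(4\rho^2)^{-1}$, this gives the inversion formula
\[
\hat g(\xi',\tau)=4\int_0^\infty x_d(-\rho)^{2-n}e^{-x_d\rho}\,\pd_{x_d}^n\hat v(\xi',x_d,\tau)\,{\rm d}x_d .
\]
We then localize this identity in two ways.

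\emph{Low horizontal frequencies, \eqref{eq:trace1-1}.} Multiply the identity by $\varphi_0(2^{-\lfloor k/2\rfloor}\xi')\theta(2^{-k}\tau)$, inserting $\tilde\theta$ as in \eqref{Dual1}. On this support $|\rho|\sim 2^{k/2}$ and $\mathrm{Re}\,\rho\gtrsim 2^{k/2}$. The kernel of the multiplier $x_d\rho^{2-n}e^{-x_d\rho}\varphi_0(2^{-\lfloor k/2\rfloor}\xi')\tilde\theta(2^{-k}\tau)$ is estimated exactly as in Lemma \ref{lem:exp-decay}. Its $L^1(\R^{d-1})$ norm is bounded by $x_d2^{\frac{(4-n)k}{2}}e^{-cx_d2^{k/2}}(1+|2^kt|^2)^{-2}$.

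Next we dualize against $\psi_k(x',t)$ with $\norm{\|\psi_k(\cdot,t)\|_{L^{q'}(\R^{d-1})}}_{L^{p'}(\R;\ell^{q'}(k))}\le1$. We move the kernel onto $\tilde\Delta^{\cT}_k\psi_k$ and use Young's inequality in $x'$ together with H\"older in $x'$. This bounds the pairing by
\[
\int_\R\int_0^\infty \norm{\pd_{x_d}^n v(\cdot,x_d,t)}_{L^q(\R^{d-1})}\sum_k x_d 2^{(1-\frac1{2q})k}e^{-cx_d2^{k/2}}\,\Psi_k(t)\,{\rm d}x_d\dt,
\qquad
\Psi_k(t)=\sup_s\frac{\|\tilde\Delta^{\cT}_k\psi_k(\cdot,t-s)\|_{L^{q'}}}{1+|2^ks|^2}.
\]
Apply H\"older in $x_d$ with exponents $(q,q')$ and then \eqref{eq:int-xd1} with exponent $q'$, $R=2^{1/2}$, $a=1$, $b=2-\frac1q$. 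The power of $R$ becomes $(b-a)q'-1=0$, so the $L^{q'}_{x_d}$ norm of the sum is $\lesssim\|\Psi_k(t)\|_{\ell^{q'}(k)}$. H\"older in $t$ then leaves $\|\Psi_k\|_{L^{p'}(\ell^{q'})}$. This is controlled by \eqref{eq:tri83-2} with $r=1$ (valid since $p',q'>1$), then by the pointwise bound $\|\tilde\Delta^{\cT}_k\psi_k(\cdot,t)\|_{L^{q'}}\lesssim M^{\cT}\|\psi_k(\cdot,t)\|_{L^{q'}}$ (Minkowski and \eqref{eq:maximal-uniform}), and finally by \eqref{eq:FS}. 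Duality then gives \eqref{eq:trace1-1}.

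\emph{High horizontal frequencies, \eqref{eq:trace1-2}.} Split $\Delta^h_{>k/2}\Delta^{\cT}_k g=\sum_{m>\lfloor k/2\rfloor}\Delta^h_m\Delta^{\cT}_k g$. For fixed $(m,k)$ with $2m\gtrsim k$ we have $|\rho|\sim 2^m$, and the multiplier $x_d\rho^{2-n}e^{-x_d\rho}\tilde\varphi(2^{-m}\xi')\tilde\theta(2^{-k}\tau)$ has kernel with $L^1(\R^{d-1})$ norm at most $x_d2^{m(2-n)}2^{k}e^{-cx_d2^m}(1+|2^kt|^2)^{-2}$. The point to check is that each $\tau$-derivative costs at most $2^{-2m}\le 2^{-k}$ (up to harmless powers of $x_d2^m$ absorbed by the exponential), so the time decay is at scale $2^{-k}$.

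Write $m=\lfloor k/2\rfloor+l$ with $l\ge1$ and run the same duality for each fixed $l$. Applying \eqref{eq:int-xd1} in $x_d$ with $R=2$, indexed by $m$, produces the weight $2^{(\frac n2-\frac1{2q})k}2^{-(n-\frac1q)m}\sim 2^{-(n-\frac1q)l}$. We expect this to be the main obstacle: the double sum in $(m,k)$ must be organized so that \eqref{eq:int-xd1} applies along a single index. Fixing the offset $l$ does this, because for fixed $l$ the map $k\mapsto m$ is at most two-to-one. The Fefferman--Stein and Peetre steps then go through verbatim for each $l$. Summing over $l\ge1$ converges exactly when $n>\frac1q$, which gives \eqref{eq:trace1-2}.
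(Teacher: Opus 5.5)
Your proposal is correct. It follows the paper's overall scheme: inversion formula, duality against $\tilde\Delta^{\cT}_k\psi_k$, H\"older in $x_d$ combined with \eqref{eq:int-xd1}, Peetre with $r=1$, then Fefferman--Stein. There are two genuine differences.

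First, for $1<q<\infty$ the paper uses the unweighted representation $\hat g=-2\int_0^\infty(-\rho)^{1-n}e^{-x_d\rho}\pd_{x_d}^n\hat v\,{\rm d}x_d$ (its $\gamma=0$). It reserves your weighted formula ($\gamma=1$) for $q=1$. For \eqref{eq:trace1-1} this choice is immaterial, since the exponent of $R$ in \eqref{eq:int-xd1} vanishes either way.

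Second, and more substantively, the two proofs of \eqref{eq:trace1-2} sum over horizontal frequencies differently.
\begin{itemize}
\item The paper sums over $j>k/2$ pointwise in $x_d$ via \eqref{eq:gamma-q-n}. It throws away the decay in $j$, using only $2^{(1+\gamma-n)j}\le 2^{(1+\gamma-n)k/2}$, which requires $1+\gamma-n\le0$. Summability is recovered from a factor $x_d^{-\ep}$, absorbed in \eqref{eq:int-xd1} with $\ep=\frac{1}{2q'^2}$. Everything then collapses to a single duality estimate of the same form as for the low-frequency part.
\item With your $\gamma=1$, that trick would require $n\ge2$ and would fail for $n=1$. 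Your offset decomposition $m=\lfloor k/2\rfloor+l$, with Minkowski in $l$ and \eqref{eq:int-xd1} indexed by $m$ ($R=2$), avoids this.
\end{itemize}

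What each route buys:
\begin{itemize}
\item The paper's route needs only one duality computation.
\item Your route costs a countable family of duality estimates. In exchange, it keeps the true off-diagonal decay $2^{-(n-\frac1q)l}$, so $n>\frac1q$ appears exactly as the summability condition for $\sum_{l\ge1}2^{-(n-\frac1q)l}$.
\item The same offset argument also handles $q=1$. There you put $\sup_k$ on the dual side and use $\sum_m x_d2^me^{-cx_d2^m}\lesssim1$ from \eqref{eq:lem-xd4}. This gives one uniform proof where the paper switches $\gamma$ between Lemmas \ref{lem:trace-d1} and \ref{lem:trace-d2}.
\end{itemize}

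A minor presentational point on the low-frequency part: keep $\theta(2^{-k}\tau)$ in the kernel so that it convolves directly against $\pd^n_{x_d}v$, and place $\tilde\theta$ only on the $\psi_k$ side. The kernel bound is the same for either cutoff.
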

\begin{proof} Let $\gamma=\lfloor \frac 1q\rfloor\in\{0,1\}$ for $1\leq q \leq \infty$ ($\gamma=0$ here and in Lemma \ref{lem:trace-d3}, $\gamma=1$ in Lemma \ref{lem:trace-d2}).   In view of \eqref{eq:v-g-hat}, we have
\begin{equation}\label{eq:lem:trace-d}
		\hat{g}(\xi',\tau)=(-2)^{1+\gamma}\int_{0}^{+\infty}x_d^{\gamma} (-\sqrt{|\xi'|^2+i\tau})^{1+\gamma-n}\cdot e^{-x_d\sqrt{|\xi'|^2+i\tau}}\cdot \pd_{x_d}^n\hat{v}(\xi',x_d,\tau) \dx_d.
\end{equation}

We now use a duality argument. 
For any sequence $\{\psi_k(x',t)\}_{k\in\mathbb Z}$ satisfying
\begin{equation*}
	\big\|\|\psi_k(\cdot,t)\|_{L^{q'}(\mathbb R^{d-1})}\big\|_{L^{p'}(\mathbb R;\ell^{q'}(k))}<\infty,
\end{equation*}
we define
\begin{align}
K_{31}&\eqdefa\sum_{k\in\mathbb{Z}}2^{(\frac{n}{2}-\frac{1}{2q})k}\int_{\R^{d-1}\times\R} \Delta_{\leq \frac{k}{2}}^{h}\Delta_{k}^{\mathcal{T}}g\cdot\psi_{k}\dx'\dt\notag\\[3pt]
&=\sum_{k\in\mathbb{Z}}2^{(\frac{n}{2}-\frac{1}{2q})k}\int_{\R^{d-1}\times\R} \Delta_{\leq \frac{k}{2}}^{h}\Delta_{k}^{\mathcal{T}}g\cdot\tilde{\Delta}_{k}^{\mathcal{T}}\psi_{k}\dx'\dt,\label{K31}
\end{align}
and
\begin{align}
K_{32}&\eqdefa\sum_{k\in\mathbb{Z}}2^{(\frac{n}{2}-\frac{1}{2q})k}\int_{\R^{d-1}\times\R} \Delta_{>\frac{k}{2}}^{h}\Delta_{k}^{\mathcal{T}}g\cdot\psi_{k}\dx'\dt\notag\\[3pt]
&=\sum_{k\in\mathbb{Z}}2^{(\frac{n}{2}-\frac{1}{2q})k}\int_{\R^{d-1}\times\R} \sum_{j>\frac{k}{2}}\Delta_{j}^{h}\Delta_{k}^{\mathcal{T}}g\cdot\tilde{\Delta}_{k}^{\mathcal{T}}\psi_{k}\dx'\dt.\label{K32}
\end{align}

We estimate $K_{31}$ first. By a similar argument as in Lemma \ref{lem:exp-decay}, 
we have
\begin{align*}
 &\quad\ \norm{\cF_{x',t}^{-1}\Big(x_d^{\gamma}(-\sqrt{|\xi'|^2+i\tau})^{1+\gamma-n}\cdot e^{-x_d\sqrt{|\xi'|^2+i\tau}}\cdot\theta(2^{-k}\tau)\cdot \varphi_0(2^{-\lfloor \frac{k}{2}\rfloor }\xi')\Big)}_{L^{1}(\R^{d-1})}\\[3pt]
 &\lec\frac{x_d^{\gamma}2^{\frac{3+\gamma-n}{2}k}}{\left(1+|2^{k} t|^2\right)^2}\cdot e^{-cx_d 2^{\frac{k}{2}}},
\end{align*}
which together with \eqref{eq:lem:trace-d}, Minkowski and Young's inequality implies that
\begin{equation}\label{eq:hg1}
	\|\Delta_{\leq \frac{k}{2}}^{h}\Delta_{k}^{\cT} g\|_{L^{q}(\R^{d-1})}\lec\int_{0}^{\infty}\int_{\R} \frac{ x_d^{\gamma}2^{\frac{3+\gamma-n}{2}k} e^{-cx_d 2^{\frac{k}{2}}}}{\left(1+|2^k (t-s)|^2\right)^2}\norm{\pd_{x_d}^nv(\cdot,x_d,s)}_{L^{q}(\R^{d-1})}\ds\dx_d.
\end{equation}
As in the proof of Lemma \ref{lem:trace-1d-1}, we get that, by \eqref{eq:hg1},
\begin{align}
	K_{31}&\lec\sum_{k\in\mathbb{Z}}\int_{\R}\Big(\int_{0}^{\infty}\int_{\R}\frac{ x_d^{\gamma}2^{\frac{3+\gamma-\frac{1}{q}}{2}k} e^{-cx_d 2^{\frac{k}{2}}}}{\left(1+|2^k (t-s)|^2\right)^2}\cdot\norm{\pd_{x_d}^nv(\cdot,x_d,s)}_{L^{q}(\R^{d-1})}\ds\dx_d\Big)\notag\\[3pt]
	&\quad\quad \times\norm{\tilde{\Delta}_{k}^{\mathcal{T}}\psi_{k}(\cdot,t)}_{L^{q'}(\R^{d-1})}\dt\notag \\[3pt]
	&\lec\int_{\R}\int_{0}^{\infty}\sum_{k\in\mathbb{Z}}\Big( x_d^{\gamma}2^{\frac{1+\gamma-\frac{1}{q}}{2}k} e^{-cx_d 2^{\frac{k}{2}}}\cdot\Big(\sup_{s\in \R} \frac{\norm{\tilde{\Delta}_{k}^{\mathcal{T}}\psi_{k}(\cdot,t-s)}_{L^{q'}(\R^{d-1})}}{1+|2^ks|^2}\Big)\Big)\notag\\[3pt]
	&\quad\quad
	\times\norm{\pd_{x_d}^nv(\cdot,x_d,t)}_{L^{q}(\R^{d-1})}\dx_d\dt.\label{eq:K31}
\end{align}
Applying H\"older's inequality and then \eqref{eq:int-xd1}, we obtain
\begin{equation}\label{eq:K31-bound}
	K_{31}\lec\int_{\R}\Big(\sum_{k\in\mathbb{Z}}\Big(\sup_{s\in \R} \frac{\norm{\tilde{\Delta}_{k}^{\mathcal{T}}\psi_{k}(\cdot,t-s)}_{L^{q'}(\R^{d-1})}}{1+|2^ks|^2}\Big)^{q'}\Big)^{\frac{1}{q'}}\cdot\norm{\pd_{x_d}^nv(\cdot,t)}_{L^{q}(\R_+^{d})}\dt.
\end{equation}
By \eqref{eq:peetre} with $r=1$, Minkowski inequality and \eqref{eq:maximal-uniform}, we have
\begin{equation}\label{4.22}
\sup_{s\in \R} \frac{\norm{\tilde{\Delta}_{k}^{\mathcal{T}}\psi_{k}(\cdot,t-s)}_{L^{q'}(\R^{d-1})}}{1+|2^ks|^2}
\lec M^{\cT}M^{\cT}\norm{\psi_k(\cdot,t)}_{L^{q'}(\R^{d-1})}.
\end{equation}
Hence by \eqref{eq:FS},
\begin{equation*}
	K_{31}\lec\norm{\norm{\psi_{k}(\cdot,t)}_{L^{q'}(\R^{d-1})}}_{L^{p'}(\R;\ell^{q'}(k))}\cdot\norm{\pd_{x_d}^nv}_{L^{p}(\R;L^q\left(\R_+^d\right))}.
\end{equation*}
By a duality argument, it follows that
for $1<p<\infty$ and $1< q<\infty$, estimate \eqref{eq:trace1-1} holds.

We now estimate $K_{32}$. For $j>\frac{k}{2}$, we have 
\begin{align*}
 &\quad\ \norm{\cF_{x',t}^{-1}\Big(x_d^{\gamma}(-\sqrt{|\xi'|^2+i\tau})^{1+\gamma-n}\cdot e^{-x_d\sqrt{|\xi'|^2+i\tau}}\cdot\theta(2^{-k}\tau)\cdot \varphi(2^{-j}\xi')\Big)}_{L^{1}(\R^{d-1})}\\[3pt]
 &\lec \bigg\|\frac{2^{(d-1)j+k}\cdot  x_d^{\gamma}2^{j(1+\gamma-n)}}{(1+|2^jx'|^2)^d(1+|2^kt|^2)^2}\norm{\left(1-\Delta_{\xi'}\right)^d\left(1-\pd_{\tau}^2\right)^2 P_1(\xi',x_d,\tau)}_{L^{1}(\R^{d-1}\times\R)}\bigg\|_{L^1(\R^{d-1})} \\[3pt]
 &\lec\frac{2^kx_d^{\gamma}2^{j(1+\gamma-n)}}{\left(1+|2^{k} t|^2\right)^2}\cdot e^{-cx_d 2^j},
\end{align*}
where 
\[P_1(\xi',x_d,\tau)=\Big(\sqrt{|\xi'|^2+2^{k-2j}i\tau }\Big)^{1+\gamma-n}\cdot e^{-x_d\cdot 2^j\sqrt{|\xi'|^2+2^{k-2j}i\tau}}\cdot\theta(\tau)\cdot \varphi(\xi').\]
The estimate above, together with \eqref{eq:lem:trace-d}, Minkowski's and Young's inequalities, implies that for $j>\frac{k}{2}$,
\begin{equation}\label{eq:hg2}
	\|\Delta_{j}^{h}\Delta_{k}^{\cT} g\|_{L^{q}(\R^{d-1})}\lec\int_{0}^{\infty} \!\!\int_{\R}\frac{ 2^kx_d^{\gamma}2^{j(1+\gamma-n)}e^{-cx_d 2^j}}{\left(1+|2^k (t-s)|^2\right)^2}\norm{\pd_{x_d}^nv(\cdot,x_d,s)}_{L^{q}(\R^{d-1})}\ds\dx_d.
\end{equation}
Given  $1+\gamma-n\leq0$ (as $n>1/q$),  we have that for $\ep>0$,
\begin{equation}\label{eq:gamma-q-n}
	\sum_{j>\frac{k}{2}} 2^{(1+\gamma-n)j}e^{-cx_d 2^j}\lec 2^{(1+\gamma-n)\frac{k}{2}}e^{-\frac{c}{2} x_d2^{\frac{k}{2}}}\sum_{j>\frac{k}{2}}(x_d2^j)^{-\ep}\lec 2^{(1+\gamma-n-\ep)\frac{k}{2}}x_d^{-\ep}e^{-\frac{c}{2}x_d2^{\frac{k}{2}}}.
\end{equation}
Hence we obtain 
\begin{align}
	K_{32}&\lec\sum_{k\in\mathbb{Z}}\int_{\R}\Big(\int_{0}^{\infty}\int_{\R}
	\frac{2^{\frac{3+\gamma-\frac{1}{q}-\ep}{2}k} x_d^{\gamma-\ep}e^{-\frac{c}{2}x_d 2^{\frac{k}{2}}}}{\left(1+|2^k (t-s)|^2\right)^2}\cdot\norm{\pd_{x_d}^nv(\cdot,x_d,s)}_{L^{q}(\R^{d-1})}\ds\dx_d\Big) \notag\\[3pt]
	&\quad\quad \times\norm{\tilde{\Delta}_{k}^{\mathcal{T}}\psi_{k}(\cdot,t)}_{L^{q'}(\R^{d-1})}\dt \notag\\[3pt]
	&\lec\int_{\R}\int_{0}^{\infty}\sum_{k\in\mathbb{Z}}\Big( 2^{\frac{1+\gamma-\frac{1}{q}-\ep}{2}k}x_d^{\gamma-\ep} e^{-\frac{c}{2}x_d 2^{\frac{k}{2}}}\cdot\Big(\sup_{s\in \R} \frac{\norm{\tilde{\Delta}_{k}^{\mathcal{T}}\psi_{k}(\cdot,t-s)}_{L^{q'}(\R^{d-1})}}{1+|2^ks|^2}\Big)\Big)\notag\\[3pt]
	&\quad\quad
	\times\norm{\pd_{x_d}^nv(\cdot,x_d,t)}_{L^{q}(\R^{d-1})}\dx_d\dt.
	\label{eq:K32}
\end{align}
By applying H\"older's inequality and then \eqref{eq:int-xd1} with $\ep=\frac{1}{2q'^2}$, we obtain the same bound as in \eqref{eq:K31-bound} for $K_{32}$. The estimate \eqref{eq:trace1-2} follows in the same way.
\end{proof}

Note that in the previous lemma, we have $\gamma=0$, and \eqref{eq:gamma-q-n} requires $n\geq 1$. In the next lemma for the endpoint case $q=1$, we will choose $\gamma=1$, which produces the bounded term
\[\sum_{k\in \mathbb{Z}}\left((x_d2^{\frac{k}{2}})^\frac{1}{2}e^{-\frac{c}{2}x_d 2^{\frac{k}{2}}}\right).\]
If we instead choose $\gamma=0$, then the corresponding term carries a non-positive power of $x_d$, and the summation diverges. This also forces the condition $n\geq 2$, since we need \eqref{eq:gamma-q-n}. This explains the condition $n>\frac{1}{q}$ in Lemma \ref{lem:trace-d0}.
 
\begin{lem}\label{lem:trace-d2}
Under the assumptions of Lemma \ref{lem:trace-d0} with $1\leq p<\infty$ and $q=1$, estimates \eqref{eq:trace1-1} and \eqref{eq:trace1-2} hold.
\end{lem}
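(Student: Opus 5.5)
We follow the proof of Lemma \ref{lem:trace-d1}, now with $\gamma=\lfloor \frac1q\rfloor=1$ in the representation \eqref{eq:lem:trace-d}, combined with the $\ell^\infty$-duality used in Lemma \ref{lem32}. With $q=1$, $q'=\infty$, we test against sequences $\{\psi_k(x',t)\}$ with $\big\|\|\psi_k(\cdot,t)\|_{L^\infty(\R^{d-1})}\big\|_{L^{p'}(\R;\ell^\infty(k))}<\infty$ and write $K_{31},K_{32}$ exactly as in \eqref{K31}--\eqref{K32}. The kernel bounds leading to \eqref{eq:hg1} and \eqref{eq:hg2} were derived for general $\gamma$, so they hold with $\gamma=1$.

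For $K_{31}$, inserting \eqref{eq:hg1} and moving the time convolution onto $\tilde\Delta_k^{\cT}\psi_k$ gives, as in \eqref{eq:K31},
\[
K_{31}\lec\int_{\R}\int_0^\infty\sum_{k\in\Z}\Big(x_d2^{\frac{k}{2}}e^{-cx_d2^{\frac k2}}\Big)\sup_{s\in\R}\frac{\|\tilde\Delta_k^{\cT}\psi_k(\cdot,t-s)\|_{L^\infty(\R^{d-1})}}{1+|2^ks|^2}\,\|\pd_{x_d}^nv(\cdot,x_d,t)\|_{L^1(\R^{d-1})}\dx_d\dt,
\]
since the exponent $\frac{1+\gamma-1/q}{2}$ equals $\frac12$. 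We then pull out the supremum over $k$ and use \eqref{eq:lem-xd4} with $r=1$, which gives $\sup_{x_d}\sum_k x_d2^{k/2}e^{-cx_d2^{k/2}}\lec1$. Next we apply \eqref{4.22} together with the pointwise bound $\sup_k M^{\cT}M^{\cT}\|\psi_k\|\le M^{\cT}M^{\cT}\sup_k\|\psi_k\|$ and \eqref{Maximal1} in $L^{p'}$. For $p=1$ we have $p'=\infty$, where \eqref{Maximal1} still holds. This yields $K_{31}\lec\|\psi\|_{L^{p'}(\ell^\infty;L^\infty)}\|\pd_{x_d}^nv\|_{L^p(L^1)}$, and \eqref{eq:trace1-1} follows by duality. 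Note that no constraint on $n$ is needed here.

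For $K_{32}$, with $n>1=\frac1q$, i.e. $n\ge2$, we have $1+\gamma-n\le0$, so \eqref{eq:gamma-q-n} applies. The resulting weight in $k$ is $2^{\frac{k}{2}(1-\ep)}x_d^{1-\ep}e^{-\frac c2x_d2^{k/2}}=(x_d2^{k/2})^{1-\ep}e^{-\frac c2 x_d2^{k/2}}$. For fixed $\ep\in(0,1)$, say $\ep=\frac12$, this is summable in $k$ uniformly in $x_d$ by \eqref{eq:lem-xd4}. The same $\ell^\infty$/maximal-function argument then gives \eqref{eq:trace1-2}.

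The main point to check is the summability in $k$. With $\gamma=0$ the weight would be $2^{\frac k2(-\ep)}x_d^{-\ep}$-type, and its sum over $k$ diverges. Choosing $\gamma=1$ and $\ep<1$ keeps a positive power of $x_d2^{k/2}$, which makes the sum bounded. We also need to confirm that the kernel estimate with the factor $x_d$ and the $(-\sqrt{\cdot})^{2-n}$ multiplier is uniform in both regimes $j\le k/2$ and $j>k/2$. This goes exactly as in Lemma \ref{lem:exp-decay}: the factor $x_d$ only multiplies the bound, and negative powers of the symbol are harmless on the annular supports.
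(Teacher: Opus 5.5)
Your proposal is correct and follows the paper's proof essentially step by step. You take $\gamma=1$ in \eqref{eq:lem:trace-d}, test against $\psi_k$ with $\|\psi_k(\cdot,t)\|_{L^\infty}$ in $L^{p'}(\R;\ell^\infty(k))$, pull out $\sup_k$ of the Peetre-type maximal function, and sum the weights $x_d2^{k/2}e^{-cx_d2^{k/2}}$ (for $K_{31}$) and $(x_d2^{k/2})^{1/2}e^{-\frac c2x_d2^{k/2}}$ (for $K_{32}$, with $\ep=\frac12$ and $n\ge2$) via \eqref{eq:lem-xd4}. The only cosmetic difference is at the last step: you use $\sup_k M^{\cT}M^{\cT}\|\psi_k\|\le M^{\cT}M^{\cT}\sup_k\|\psi_k\|$ together with \eqref{Maximal1}, whereas the paper cites \eqref{eq:FS} with $q=\infty$, which by Remark~\ref{rem2.3} is the same argument.
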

\begin{proof}
The proof is similar to Lemma \ref{lem32}. We continue to use  \eqref{K31} and \eqref{K32} from Lemma \ref{lem:trace-d1}.  By \eqref{eq:K31} with $q=1$,
\begin{equation}
	K_{31}\lec\int_{\R}\sup_{k}\sup_{s\in \R} \frac{\norm{\tilde{\Delta}_{k}^{\mathcal{T}}\psi_{k}(\cdot,t-s)}_{L^{\infty}(\R^{d-1})}}{1+|2^ks|^2}\cdot\norm{\pd_{x_d}^nv(\cdot,t)}_{L^{1}(\R_+^{d})}\dt.  \label{eq:K31-q1}
\end{equation}
Hence by \eqref{4.22} and \eqref{eq:FS}, we derive
\begin{equation*}
	K_{31}\lec\norm{\norm{\psi_{k}(\cdot,t)}_{L^{\infty}(\R^{d-1})}}_{L^{p'}(\R;\ell^{\infty}(k))}\cdot\norm{\pd_{x_d}^nv}_{L^{p}(\R;L^1\left(\R_+^d\right))}.
\end{equation*}
Thus by a duality argument, for $1\leq p<\infty$ and $q=1$, we obtain \eqref{eq:trace1-1}.

For $K_{32}$ with $n>1/q=1$,
by \eqref{eq:K32} with  $\ep=\frac{1}{2}$, we get 
\begin{align*}
	K_{32}
	&\lec \int_{\R}\int_0^\infty \sum_{k\in \mathbb{Z}}\left((x_d2^{\frac{k}{2}})^\frac{1}{2}e^{-\frac{c}{2}x_d 2^{\frac{k}{2}}}\right)
	\sup_{k}\sup_{s\in \R} \frac{\norm{\tilde{\Delta}_{k}^{\mathcal{T}}\psi_{k}(\cdot,t-s)}_{L^{\infty}(\R^{d-1})}}{1+|2^ks|^2}\\[3pt]
	&\quad \quad \times \norm{\pd_{x_d}^nv(\cdot,x_d,t)}_{L^{1}(\R^{d-1})}\dx_d\dt.
\end{align*}
Applying \eqref{eq:lem-xd4}, we obtain the same bound \eqref{eq:K31-q1} for $K_{32}$. The estimate \eqref{eq:trace1-2} follows in the same way.
\end{proof}
\begin{lem}\label{lem:trace-d3}
	Under the assumptions of Lemma \ref{lem:trace-d0} with $1< p\leq\infty$ and $q=\infty$, estimates \eqref{eq:trace1-1} and \eqref{eq:trace1-2} hold.
\end{lem}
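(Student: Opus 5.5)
The plan is to mirror the proof of Lemma \ref{lem:trace-1d-2}: work pointwise in $k$ and $t$ without duality, and finish with the boundedness of $M^{\cT}$ on $L^p(\R)$ for $1<p\le\infty$, i.e.\ \eqref{Maximal1}. Take $\gamma=0$ in \eqref{eq:lem:trace-d}. Since $\tilde{\theta}=1$ on $\supp\theta$, we may insert $\Delta_k^{\cT}$ in front of $\pd_{x_d}^n v$, so that
\[
\Delta_{\leq\frac k2}^h\Delta_k^{\cT}g=-2\int_0^\infty \Psi_k(\cdot,x_d,\cdot)\ast_{x',t}\Delta_k^{\cT}\pd_{x_d}^n v(\cdot,x_d,\cdot)\,\dx_d ,
\]
with the kernel
\[
\Psi_k=\cF^{-1}_{x',t}\Big((-\sqrt{|\xi'|^2+i\tau})^{1-n}e^{-x_d\sqrt{|\xi'|^2+i\tau}}\,\tilde\theta(2^{-k}\tau)\,\varphi_0(2^{-\lfloor k/2\rfloor}\xi')\Big).
\]
The argument of Lemma \ref{lem:exp-decay} gives
\[
\|\Psi_k(\cdot,x_d,t)\|_{L^1(\R^{d-1})}\lec \frac{2^{\frac{3-n}{2}k}}{(1+|2^kt|^2)^2}\,e^{-cx_d2^{k/2}}.
\]
By Young's inequality in $x'$ (with $L^\infty$ norms) and the usual absorption of the time convolution, we then get
\[
\|\Delta_{\leq\frac k2}^h\Delta_k^{\cT}g(\cdot,t)\|_{L^\infty(\R^{d-1})}\lec 2^{\frac{1-n}{2}k}\int_0^\infty e^{-cx_d2^{k/2}}\,\sup_{s}\frac{\|\Delta_k^{\cT}\pd_{x_d}^nv(\cdot,x_d,t-s)\|_{L^\infty(\R^{d-1})}}{1+|2^ks|^2}\,\dx_d .
\]
The $x_d$-integral of $e^{-cx_d2^{k/2}}$ is $\lec 2^{-k/2}$, so the right-hand side is bounded by $2^{-\frac n2 k}$ times the sup over $x_d$ of the Peetre-type quantity.

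Next, \eqref{eq:peetre} with $r=1$, applied with $X=L^\infty(\R^{d-1})$ for each fixed $x_d$, together with \eqref{eq:maximal-uniform}, yields
\[
\sup_s\frac{\|\Delta_k^{\cT}\pd_{x_d}^nv(\cdot,x_d,t-s)\|_{L^\infty}}{1+|2^ks|^2}\lec M^{\cT}M^{\cT}\|\pd_{x_d}^nv(\cdot,x_d,t)\|_{L^\infty(\R^{d-1})}.
\]
This step needs $\|\Delta_k^{\cT} f\|_X\lec M^{\cT}\|f\|_X$, which follows from Minkowski's inequality because the kernel of $\Delta_k^{\cT}$ acts only in $t$. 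Taking the sup over $x_d$ and moving it inside the maximal functions gives $\lec M^{\cT}M^{\cT}\|\pd_{x_d}^nv(\cdot,t)\|_{L^\infty(\R^d_+)}$. Hence $\sup_k 2^{\frac n2 k}\|\Delta^h_{\le k/2}\Delta^{\cT}_kg(\cdot,t)\|_{L^\infty}$ is controlled by this double maximal function uniformly in $k$. Taking $L^p_t$ norms and applying \eqref{Maximal1} twice proves \eqref{eq:trace1-1} for $1<p\le\infty$. For $n=0$ the power $2^{\frac{1-n}{2}k}$ is still harmless, so this works for every $n\ge0$.

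For \eqref{eq:trace1-2}, where $n>0$ so $n\ge1$, do the same for each $j>\frac k2$ using the kernel bound from the proof of Lemma \ref{lem:trace-d1}:
\[
\frac{2^k2^{j(1-n)}}{(1+|2^kt|^2)^2}e^{-cx_d2^j}.
\]
This gives
\[
\|\Delta^h_j\Delta^{\cT}_kg(\cdot,t)\|_{L^\infty}\lec 2^{j(1-n)}\int_0^\infty e^{-cx_d2^j}\,\dx_d\cdot M^{\cT}M^{\cT}\|\pd_{x_d}^nv(\cdot,t)\|_{L^\infty(\R^d_+)}\lec 2^{-jn}M^{\cT}M^{\cT}\|\pd_{x_d}^nv(\cdot,t)\|_{L^\infty(\R^d_+)}.
\]
Summing over $j>\frac k2$ gives $2^{-\frac n2k}$ because $n\ge1$; this is the only place $n>\frac1q=0$ is used. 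The conclusion then follows exactly as before.

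The main obstacle is justifying that $\Delta_k^{\cT}$ may be placed on $v$ and that the vector-valued Peetre bound may be applied with the $x_d$-supremum pulled outside. I would handle this by applying Lemma \ref{lem:peetre} at fixed $x_d$ with $r=1$, which avoids any finiteness hypothesis, and only afterwards using $\sup_{x_d}M^{\cT}\le M^{\cT}\sup_{x_d}$.
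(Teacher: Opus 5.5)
Your proposal is correct and follows the paper's proof essentially step for step. You insert $\tilde\theta(2^{-k}\tau)$ to put $\Delta_k^{\cT}$ on $\pd_{x_d}^n v$, use the kernel bounds, integrate out $x_d$, bound the Peetre quantity by $M^{\cT}M^{\cT}\|\pd_{x_d}^n v(\cdot,t)\|_{L^\infty(\R^d_+)}$, and finish with \eqref{Maximal1}. The only cosmetic difference is in \eqref{eq:trace1-2}: you integrate in $x_d$ for each $j$ and then sum the geometric series $\sum_{j>k/2}2^{-jn}$, while the paper first sums in $j$ via \eqref{eq:gamma-q-n} with $\ep=\frac12$ and then integrates the weight $x_d^{-1/2}$ via \eqref{eq:lem-xd5}; by Tonelli these are equivalent.
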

\begin{proof}
The proof is similar to that of Lemma \ref{lem:trace-1d-2}: we insert the factor $\tilde \theta(2^{-k}\tau)\theta(2^{-k}\tau)$ before $\pd_{x_d}^n\hat{v}$ when we apply $\Delta_k^\cT$ to \eqref{eq:lem:trace-d}. By \eqref{eq:hg1}, \eqref{eq:lem-xd5},  \eqref{eq:peetre} with $r=1$ and \eqref{eq:maximal-uniform}, we have that for $q=\infty$,
\begin{align}
\|\Delta_{\leq \frac{k}{2}}^h\Delta_{k}^{\cT} g\|_{L^{\infty}(\R^{d-1})}&\lec\int_{0}^{\infty}\int_{\R} \frac{ 2^{(\frac{3}{2}-\frac{n}{2})k} e^{-cx_d 2^{\frac{k}{2}}}}{\left(1+|2^k (t-s)|^2\right)^2}\norm{\Delta_{k}^{\mathcal{T}}\pd_{x_d}^nv(\cdot,x_d,s)}_{L^{\infty}(\R^{d-1})}\ds\dx_d\notag \\[3pt]
		&\lec\int_{0}^{\infty} 2^{(\frac{1}{2}-\frac{n}{2})k} e^{-cx_d 2^{\frac{k}{2}}}\cdot\sup_{s\in \R} \frac{\norm{\Delta_{k}^{\mathcal{T}}\pd_{x_d}^nv(\cdot,x_d,t-s)}_{L^{\infty}(\R^{d-1})}}{1+|2^ks|^2}\dx_d \notag\\[3pt]
		&\lec2^{-\frac{n}{2}k}\sup_{s\in \R} \frac{\norm{\Delta_{k}^{\mathcal{T}}\pd_{x_d}^nv(\cdot,t-s)}_{L^{\infty}(\R_+^{d})}}{1+|2^ks|^2} \notag
\\[3pt]
&\lec2^{-\frac{n}{2}k} \cdot M^{\cT}M^{\cT} \norm{\pd_{x_d}^nv(\cdot,t)}_{L^{\infty}(\R_+^{d})}. \label{4.24}
\end{align}
Then, applying \eqref{eq:FS}, we obtain \eqref{eq:trace1-1}.

Analogously, by \eqref{eq:hg2} and \eqref{eq:gamma-q-n} with $n>1/q=0$ and $\ep=\frac{1}{2}$, we have
\begin{align*}
	&\quad\ \sum_{j>\frac{k}{2}}\|\Delta_{j}^h\Delta_{k}^{\cT} g\|_{L^{\infty}(\R^{d-1})}\\[3pt]
	&\lec\int_{0}^{\infty}\int_{\R} \frac{  x_d^{-\frac12}2^{(\frac{5}{2}-n)\frac{k}{2}}e^{-\frac{c}{2}x_d 2^{\frac{k}{2}}}}{\left(1+|2^k (t-s)|^2\right)^2}\norm{\Delta_{k}^{\mathcal{T}}\pd_{x_d}^nv(\cdot,x_d,s)}_{L^{\infty}(\R^{d-1})}\ds\dx_d \notag\\[3pt]
	&\lec\int_{0}^{\infty}x_d^{-\frac12} 2^{(\frac{1}{2}-n)\frac{k}{2}} e^{-\frac{c}{2}x_d 2^{\frac{k}{2}}}\cdot\sup_{s\in \R} \frac{\norm{\Delta_{k}^{\mathcal{T}}\pd_{x_d}^nv(\cdot,x_d,t-s)}_{L^{\infty}(\R^{d-1})}}{1+|2^ks|^2}\dx_d.
\end{align*}
Therefore, by \eqref{eq:lem-xd5},  \eqref{eq:peetre} with $r=1$ and \eqref{eq:maximal-uniform}, we achieve
\begin{align*}
\|\Delta_{>\frac{k}{2}}^{h}\Delta_{k}^{\cT} g\|_{L^{\infty}(\R^{d-1})} 
&\lec2^{-\frac{n}{2}k}\sup_{s\in \R} \frac{\norm{\Delta_{k}^{\mathcal{T}}\pd_{x_d}^nv(\cdot,t-s)}_{L^{\infty}(\R_+^{d})}}{1+|2^ks|^2}
\\
&\lec2^{-\frac{n}{2}k} \cdot M^{\cT}M^{\cT} \norm{\pd_{x_d}^nv(\cdot,t)}_{L^{\infty}(\R_+^{d})},
\end{align*}
which together with \eqref{eq:FS} yields \eqref{eq:trace1-2}.
\end{proof}

\begin{lem}\label{lem:trace-d0b}
Let $d\geq 2$. Suppose that $g(x',t)\in C_c^\infty(\R^{d-1}\times \R)$, and $v(x,t)$ is given by \eqref{eq:def-v}. Then, for $1<p<\infty$ and $1\leq q\leq \infty$, as well as for $p=q=1$ and $p=q=\infty$, and for every integer $n\geq 0$, we have
\begin{align}\label{eq:trace2-1}
	\norm{2^{(n-\frac{1}{q})m}\|\Delta_{m}^h\Delta_{<2m}^{\cT} g\|_{L^{q}(\R^{d-1})}}_{L^p(\R;\ell^q(m))}
		\lec \norm{\pd_{x_d}^nv}_{L^{p}\big(\R;\,L^{q}(\R_+^{d})\big)}.
\end{align}
Moreover, if $n>\frac{1}{q}$, then
	\begin{equation}\label{eq:trace2-2}
		\norm{2^{(n-\frac{1}{q})m}\|\Delta_{m}^h\Delta_{\geq 2m}^{\cT} g\|_{L^{q}(\R^{d-1})}}_{L^p(\R;\ell^q(m))}
		\lec \norm{\pd_{x_d}^nv}_{L^{p}\big(\R;\,L^{q}(\R_+^{d})\big)}.
	\end{equation}
\end{lem}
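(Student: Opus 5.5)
The plan is to mirror Lemma \ref{lem:trace-d0}, with the roles of horizontal and temporal frequencies exchanged. We start from the representation \eqref{eq:lem:trace-d} with $\gamma=\lfloor\frac1q\rfloor$. By duality, for a test sequence $\{\psi_m(x',t)\}$ with $\big\|\|\psi_m(\cdot,t)\|_{L^{q'}}\big\|_{L^{p'}(\R;\ell^{q'}(m))}\le1$, we consider
\[
K_{41}\eqdefa\sum_{m}2^{(n-\frac1q)m}\int \Delta^h_m\Delta^{\cT}_{<2m}g\cdot\psi_m\dx'\dt
\]
and the analogous quantity $K_{42}$ with $\Delta^{\cT}_{\ge2m}=\sum_{k\ge2m}\Delta^{\cT}_k$. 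We may replace $\psi_m$ by $\tilde\Delta^{\cT}_{<2m}\psi_m\eqdefa\cF_t^{-1}(\tilde\theta_0(2^{-2m}\tau)\cF_t\psi_m)$ in $K_{41}$, and by $\tilde\Delta^{\cT}_k\psi_m$ termwise in $K_{42}$.

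\textbf{Low temporal part \eqref{eq:trace2-1}.} Arguing as in Lemma \ref{lem:exp-decay} (the multiplier is now localized by $\varphi(2^{-m}\xi')\theta_0(2^{-2m}\tau)$, where $\mathrm{Re}\sqrt{|\xi'|^2+i\tau}\gtrsim 2^m$), the $L^1(\R^{d-1})$ norm of the kernel is bounded by
\[
\frac{x_d^{\gamma}\,2^{m(3+\gamma-n)}}{(1+|2^{2m}t|^2)^2}\,e^{-cx_d2^m}.
\]
This gives
\[
\|\Delta^h_m\Delta^{\cT}_{<2m}g\|_{L^q}\lec\int_0^\infty\!\!\int_\R\frac{x_d^\gamma2^{m(3+\gamma-n)}e^{-cx_d2^m}}{(1+|2^{2m}(t-s)|^2)^2}\|\pd_{x_d}^nv(\cdot,x_d,s)\|_{L^q}\ds\dx_d .
\]
Multiplying by $2^{(n-\frac1q)m}$ and moving the time convolution onto $\psi_m$ as in \eqref{eq:K31}, we get a bound by $\int\!\int\sum_m x_d^\gamma2^{m(1+\gamma-\frac1q)}e^{-cx_d2^m}\,\Psi_m(t)\,\|\pd_{x_d}^nv(\cdot,x_d,t)\|_{L^q}$, where
\[
\Psi_m(t)=\sup_s\frac{\|\tilde\Delta^{\cT}_{<2m}\psi_m(\cdot,t-s)\|_{L^{q'}}}{1+|2^{2m}s|^2}\lec M^{\cT}M^{\cT}\|\psi_m(\cdot,t)\|_{L^{q'}},
\]
by \eqref{eq:peetre} with $r=1$ and \eqref{eq:maximal-uniform}. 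Then the cases proceed as follows:
\begin{itemize}
\item For $1<p,q<\infty$, apply H\"older and \eqref{eq:int-xd1} with $R=2$ (note $a=\gamma=0$), followed by \eqref{eq:FS}.
\item For $q=1$, take $\gamma=1$, bound $\sum_m x_d2^m e^{-cx_d2^m}\lec1$ by \eqref{eq:lem-xd4}, and take $\sup_m\Psi_m$; \eqref{Maximal1} then handles $1\le p<\infty$, as in Lemma \ref{lem:trace-d2}.
\item For $q=\infty$ (and $p=q=\infty$), insert $\tilde\theta_0\theta_0$ and argue pointwise as in Lemma \ref{lem:trace-d3}. This gives $2^{-nm}M^{\cT}M^{\cT}\|\pd_{x_d}^nv(\cdot,t)\|_{L^\infty}$, and we conclude with \eqref{Maximal1}.
\end{itemize}
No condition on $n$ is needed in this part, since the only $x_d$-summation is over a single scale.

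\textbf{High temporal part \eqref{eq:trace2-2}.} For $k\ge2m$ the kernel with $\varphi(2^{-m}\xi')\theta(2^{-k}\tau)$ has $L^1_{x'}$ norm bounded by
\[
\frac{2^k x_d^\gamma2^{\frac k2(1+\gamma-n)}e^{-cx_d2^{k/2}}}{(1+|2^kt|^2)^2},
\]
after rescaling $\tau$ by $2^k$ and $\xi'$ by $2^m$, exactly as in \eqref{eq:hg2} but with the roles of $j$ and $k$ swapped. Then
\[
2^{(n-\frac1q)m}\|\Delta^h_m\Delta^{\cT}_{\ge2m}g\|\le\sum_{k\ge2m}2^{(n-\frac1q)(m-\frac k2)}\,\Big[2^{(n-\frac1q)\frac k2}\times(\text{kernel bound at scale }k)\Big],
\]
and $2^{(n-\frac1q)(m-k/2)}$ is a summable geometric factor precisely because $n>\frac1q$.

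We would handle this as in \eqref{eq:gamma-q-n}, but in the dual form. We bound
\[
\sum_{m\le k/2}2^{(n-\frac1q)(m-\frac k2)}\,\|\tilde\Delta^{\cT}_k\psi_m(\cdot,t-s)\|_{L^{q'}}\lec M^{\cT}M^{\cT}\Big(\sum_{m\le k/2}2^{(n-\frac1q)(m-\frac k2)}\|\psi_m\|_{L^{q'}}\Big),
\]
define $\Theta_k\eqdefa\sum_{m\le k/2}2^{(n-\frac1q)(m-\frac k2)}\|\psi_m\|_{L^{q'}}$, and observe that $\|\Theta_k\|_{L^{p'}(\ell^{q'}(k))}\lec\|\psi_m\|_{L^{p'}(\ell^{q'}(m))}$ by Young's inequality on $\ell^{q'}$; each $m$ appears for about two values of $k$ per level, which is harmless. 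This reduces $K_{42}$ to the expression already handled in \eqref{eq:K31}, with $\psi_k$ replaced by $\Theta_k$, and the three cases are then finished as in Lemmas \ref{lem:trace-d1}--\ref{lem:trace-d3}.

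\textbf{Main obstacle.} The main difficulty is the endpoint $q=1$ in the high part. There $\gamma=1$, so the factor $x_d2^{k/2}e^{-cx_d2^{k/2}}$ must be summed over $k$ using \eqref{eq:lem-xd4}, while the geometric sum over $m$ requires $n>1$. I expect the choice of placing the $m$-summation on the dual side (via $\Theta_k$) rather than on the kernel to be what makes the $q=1$ and $q=\infty$ cases go through.
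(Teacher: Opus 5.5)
Your proposal is correct, and for the low temporal part \eqref{eq:trace2-1} it is essentially the paper's argument. For the high part \eqref{eq:trace2-2} you take a different route.

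\textbf{The paper's route.} It keeps the test-function index $m$ outermost. Each Peetre supremum of $\tilde\Delta^{\cT}_j\psi_m$ is bounded by $M^{\cT}M^{\cT}\|\psi_m\|_{L^{q'}}$ uniformly in $j$. The $x_d$-kernels are then summed over $j\ge 2m$ via \eqref{eq:gamma-q-n}, which is where $1+\gamma-n\le0$ is used. This costs a factor $x_d^{-\ep}$, absorbed by taking $\ep=\frac{1}{2q'^2}$ in \eqref{eq:int-xd1}, or $\ep=\frac12$ with \eqref{eq:lem-xd4} when $q=1$. The result \eqref{eq:K42} has the same form as \eqref{eq:K41}.

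\textbf{Your route.} You use that the kernel bound for $\Delta^h_m\Delta^{\cT}_kg$ is independent of $m\le k/2$. This lets you move the $m$-sum onto the test functions and absorb $2^{(n-\frac1q)(m-\frac k2)}$ by Young's inequality in $\ell^{q'}$. You land exactly on the $K_{31}$ structure with no $\ep$-loss. It also isolates the role of $n>\frac1q$ as the summability of a geometric factor.

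\textbf{Two details need care.}

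First, your displayed maximal bound must carry the Peetre weight $(1+|2^ks|^2)^{-1}$ and $\sup_s$; as written, at fixed $s$, it is false. With the weight it holds, because $\|\tilde\Delta^{\cT}_k\psi_m(\cdot,t)\|_{L^{q'}}\lec(|h_k|*\|\psi_m\|_{L^{q'}})(t)$, where $h_k$ is the kernel of $\tilde\Delta^{\cT}_k$. This is a positive linear majorant, so you may sum in $m$ before applying the maximal operator. Sublinearity of $M^{\cT}$ alone would give the inequality in the wrong direction.

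Second, at $q=\infty$ you cannot finish the dual form as in \eqref{eq:K31}. That would require \eqref{eq:FS} with $\ell^{1}$ in place of $\ell^{q'}$, which is false. There you should run your decomposition pointwise, summing $\sum_{k\ge 2m}2^{n(m-\frac k2)}\lec 1$ directly as in Lemma \ref{lem:trace-d3b}.

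\textbf{On your closing guess.} The endpoints do not depend on where the $m$-sum is placed. The paper's kernel-side summation already handles $q=1$. At $q=\infty$ it is the pointwise form, not the dual one, that goes through directly.
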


The proof follows the same scheme as that of Lemma \ref{lem:trace-d0}. As before, we split the argument into three lemmas (Lemma \ref{lem:trace-d1b}-\ref{lem:trace-d3b}), corresponding respectively to the three ranges of the indices $p$ and $q$.

\begin{lem}\label{lem:trace-d1b}
Under the assumptions of Lemma \ref{lem:trace-d0b} with $1<p,q<\infty$, estimates \eqref{eq:trace2-1} and \eqref{eq:trace2-2} hold.
\end{lem}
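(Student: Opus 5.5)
We follow the scheme of Lemma \ref{lem:trace-d1}, interchanging the roles of the horizontal and temporal frequencies. We start from the representation \eqref{eq:lem:trace-d} with $\gamma=0$ (since $1<q<\infty$):
\[
\hat g=-2\int_0^\infty(-\sqrt{|\xi'|^2+i\tau})^{1-n}e^{-x_d\sqrt{|\xi'|^2+i\tau}}\,\pd_{x_d}^n\hat v\dx_d .
\]
The dualized quantities are
\[
K_{41}\eqdefa\sum_{m}2^{(n-\frac1q)m}\int_{\R^{d-1}\times\R}\Delta_m^h\Delta_{<2m}^{\cT}g\cdot\psi_m\dx'\dt ,
\]
and $K_{42}$, defined in the same way with $\Delta^{\cT}_{<2m}$ replaced by $\sum_{k\geq 2m}\Delta_k^{\cT}$. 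Here $\{\psi_m\}$ ranges over sequences with $\|\|\psi_m\|_{L^{q'}(\R^{d-1})}\|_{L^{p'}(\R;\ell^{q'}(m))}\le1$. Since the temporal frequency is now not localized at scale $2^{2m}$ by a single block, we use the smooth projection $\tilde\theta_0(2^{-2m}\tau)$ to move $\Delta^{\cT}_{<2m}$ onto $\psi_m$, as in \eqref{Dual1}.

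\textbf{Estimate of $K_{41}$.} Exactly as in Lemma \ref{lem:exp-decay}, the kernel with symbol $(-\sqrt{|\xi'|^2+i\tau})^{1-n}e^{-x_d\sqrt{\cdot}}\varphi(2^{-m}\xi')\theta_0(2^{-2m}\tau)$ has $L^1(\R^{d-1})$ norm bounded by
\[
\frac{2^{2m}\,2^{m(1-n)}}{(1+|2^{2m}t|^2)^2}\,e^{-cx_d2^m}.
\]
This uses $\mathrm{Re}\sqrt{|\xi'|^2+i\tau}\gtrsim 2^m$ on the support and the boundedness of derivatives of the symbol after scaling. By Minkowski and Young we obtain the analogue of \eqref{eq:hg1}:
\[
\|\Delta_m^h\Delta^{\cT}_{<2m}g\|_{L^q}\lec\int_0^\infty\!\!\int_\R\frac{2^{(3-n)m}e^{-cx_d2^m}}{(1+|2^{2m}(t-s)|^2)^2}\,\|\pd_{x_d}^nv(\cdot,x_d,s)\|_{L^q}\ds\dx_d .
\]
We move the $s$-convolution onto $\tilde\Delta^{\cT}_{<2m}\psi_m$, which produces $\sup_s\|\tilde\Delta^{\cT}_{<2m}\psi_m(\cdot,t-s)\|_{L^{q'}}/(1+|2^{2m}s|^2)$ weighted by $2^{(1-\frac1q)m}e^{-cx_d2^m}$. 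Applying H\"older in $m$ and then \eqref{eq:int-xd1} with $R=2$, $a=0$, $b=1-\frac1q$ reduces $K_{41}$ to a bound of the form \eqref{eq:K31-bound}. The Peetre bound \eqref{eq:peetre} with $r=1$, together with \eqref{eq:maximal-uniform}, gives control by $M^{\cT}M^{\cT}\|\psi_m(\cdot,t)\|_{L^{q'}}$. Lemma \ref{lem:FS} then closes the estimate, and \eqref{eq:trace2-1} follows by duality. This part should hold for every $n\ge0$.

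\textbf{Estimate of $K_{42}$.} For each $k\ge2m$ the symbol $\varphi(2^{-m}\xi')\theta(2^{-k}\tau)$ has kernel $L^1$ norm
\[
\lesssim\frac{2^k\,2^{\frac{k}{2}(1-n)}}{(1+|2^kt|^2)^2}\,e^{-cx_d2^{k/2}},
\]
since the $\tau$-scale now dominates; this is the analogue of \eqref{eq:hg2}. We sum over $k\ge2m$. The Peetre suprema now depend on $k$, so we keep them inside the sum and write each term as
\[
2^{(n-\frac1q)m}2^{\frac k2(1-n)}\,e^{-cx_d2^{k/2}}\;\times\;\text{(Peetre sup at scale }2^k\text{ of }\tilde\Delta_k^{\cT}\psi_m).
\]
Because $n>\frac1q$ and $k\ge 2m$, we have $2^{(n-\frac1q)m}\le2^{(n-\frac1q)k/2}$ times a factor $2^{-(n-\frac1q)(k/2-m)}$, which is geometrically decaying in $k-2m$. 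Each term is therefore a $K_{32}$-type expression at scale $k$, tested against $\tilde\Delta^{\cT}_k\psi_m$, with an $\ell^1$-summable weight in $k-2m$.

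The main obstacle is handling the double index $(m,k)$ in the dual norm. The plan is to set $\Psi_k\eqdefa\sum_{m\le k/2}2^{-(n-\frac1q)(k/2-m)}M^{\cT}M^{\cT}\|\psi_m\|_{L^{q'}}$. By Young's inequality in $\ell^{q'}$, which uses the condition $n>\frac1q$, we get $\|\Psi_k\|_{L^{p'}\ell^{q'}(k)}\lec\|M^{\cT}M^{\cT}\|\psi_m\|\|_{L^{p'}\ell^{q'}(m)}$. After that, H\"older and \eqref{eq:int-xd1} in $x_d$, with weight $2^{\frac k2(1-\frac1q)}e^{-cx_d2^{k/2}}$ and $R=2^{1/2}$, followed by Lemma \ref{lem:FS}, give $K_{42}\lec\|\pd_{x_d}^nv\|_{L^p(\R;L^q(\R^d_+))}$, and hence \eqref{eq:trace2-2}. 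If the $x_d$-integration produces a bad power, we will fall back on the $\epsilon$-trick of \eqref{eq:gamma-q-n}.
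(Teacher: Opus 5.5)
Your proposal is correct. For $K_{41}$ it follows the paper's argument; the paper tests against $\Delta^{\cT}_{<2m+2}\psi_m$ rather than the multiplier $\tilde\theta_0(2^{-2m}\tau)$, which makes no difference.

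For $K_{42}$ you collapse the double index $(m,k)$, $k\ge 2m$, in a genuinely different way. The paper bounds each Peetre supremum at scale $2^j$ by $M^{\cT}M^{\cT}\|\psi_m(\cdot,t)\|_{L^{q'}(\R^{d-1})}$, uniformly in $j$. It then sums the kernel weights $2^{(n-\frac1q)m}\sum_{j\ge 2m}2^{(1-n)\frac j2}e^{-cx_d2^{j/2}}$ using \eqref{eq:gamma-q-n}. That step uses the hypothesis only in the form $1-n\le 0$. For $n=1$ the sum diverges logarithmically as $x_d2^m\to0$, so the paper pays a factor $x_d^{-\epsilon}$ and applies \eqref{eq:int-xd1} over $m$ with $a=-\epsilon$, $\epsilon=\frac{1}{2q'^2}$.

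You instead write $2^{(n-\frac1q)m}=2^{(n-\frac1q)\frac k2}\,2^{-(n-\frac1q)(\frac k2-m)}$ and push the geometric factor onto the test functions through a discrete Young/Schur bound for $\Psi_k$. This works because the row and column sums of $2^{-(n-\frac1q)(\frac k2-m)}\chi_{\{k\ge 2m\}}$ are bounded when $n>\frac1q$. What remains is a single-scale expression of $K_{31}$ type (not $K_{32}$ type), with weight $2^{\frac k2(1-\frac1q)}e^{-cx_d2^{k/2}}$. To this, \eqref{eq:int-xd1} applies with $a=0$ and $R=2^{1/2}$, so the $\epsilon$-trick you keep in reserve is never needed.

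Each route has its advantage. Yours uses the gap $n-\frac1q>0$ quantitatively and avoids the logarithmic loss. The paper's keeps a single summation index and reuses the $K_{41}$ computation verbatim, and it reuses the same scheme with $\gamma=1$, $\epsilon=\frac12$ for $q=1$.

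One small correction: the H\"older step is in $x_d$, not in $m$. It pairs the $L^{q'}(\R_+)$ norm of the weighted sum with $\|\pd_{x_d}^nv(\cdot,t)\|_{L^q(\R^d_+)}$. Then \eqref{eq:int-xd1} must be applied with exponent $q'$ in place of $q$, which is exactly what makes the factor $R^{(b-a)jq'-j}$ equal to $1$.
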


\begin{proof} 
We use a duality argument. 
For any sequence $\{\psi_m(x',t)\}_{m\in\mathbb Z}$ satisfying
\begin{equation*}
	\|\|\psi_m(\cdot,t)\|_{L^{q'}(\mathbb R^{d-1})}\|_{L^{p'}(\mathbb R;\ell^{q'}(m))}<\infty,
\end{equation*}
we define
\begin{align*}
K_{41}&\eqdefa\sum_{m\in\mathbb{Z}}2^{(n-\frac{1}{q})m}\int_{\R^{d-1}\times\R} \Delta_{m}^{h}\Delta_{<2m}^{\mathcal{T}}g\cdot \psi_{m}\dx'\dt\\[3pt]
&=\sum_{m\in\mathbb{Z}}2^{(n-\frac{1}{q})m}\int_{\R^{d-1}\times\R} \Delta_{m}^{h}\Delta_{<2m}^{\mathcal{T}}g\cdot \Delta_{<2m+2}^{\mathcal{T}}\psi_{m}\dx'\dt,
\end{align*}
and
\begin{align*}
K_{42}&\eqdefa\sum_{m\in\mathbb{Z}}2^{(n-\frac{1}{q})m}\int_{\R^{d-1}\times\R} \Delta_{m}^{h}\Delta_{\geq2m}^{\mathcal{T}}g\cdot\psi_{m}\dx'\dt\\[3pt]
&=\sum_{m\in\mathbb{Z}}2^{(n-\frac{1}{q})m}\int_{\R^{d-1}\times\R} \sum_{j\geq 2m}\Delta_{m}^{h}\Delta_{j}^{\mathcal{T}}g\cdot\tilde{\Delta}_{j}^{\mathcal{T}}\psi_{m}\dx'\dt.
\end{align*}

We estimate $K_{41}$ first. By a similar argument of \eqref{eq:hg1}, we get 
\begin{align}\label{eq:hg1-1}
 \|\Delta_{m}^h\Delta_{<2m}^{\cT} g\|_{L^{q}(\R^{d-1})}&\lec\int_{0}^{\infty}\int_{\R} \frac{ x_d^\gamma 2^{(3+\gamma-n)m} e^{-cx_d 2^{m}}}{\left(1+|2^{2m} (t-s)|^2\right)^2}\norm{\pd_{x_d}^nv(\cdot,x_d,s)}_{L^{q}(\R^{d-1})}\ds\dx_d,
\end{align}
and hence
\begin{align}
	K_{41}
	&\lec\int_{\R}\int_{0}^{\infty}\sum_{m\in\mathbb{Z}}\Big( x_d^{\gamma}2^{(1+\gamma-\frac{1}{q})m} e^{-cx_d 2^{m}}\cdot\Big(\sup_{s\in \R} \frac{\norm{\Delta_{<2m+2}^{\mathcal{T}}\psi_{m}(\cdot,t-s)}_{L^{q'}(\R^{d-1})}}{1+|2^{2m}s|^2}\Big)\Big)\notag\\[3pt]
	&\quad\quad
	\times\norm{\pd_{x_d}^nv(\cdot,x_d,t)}_{L^{q}(\R^{d-1})}\dx_d\dt. \label{eq:K41}
\end{align}
Arguing similarly as that for \eqref{eq:K31}, and using the fact that
\begin{equation}\label{ML1}
\sup_{s\in \R} \frac{\norm{\Delta_{<2m+2}^{\mathcal{T}}\psi_{m}(\cdot,t-s)}_{L^{q'}(\R^{d-1})}}{1+|2^{2m}s|^2}
\lec M^{\cT}M^{\cT}\norm{\psi_m(\cdot,t)}_{L^{q'}(\R^{d-1})},
\end{equation}
we get by \eqref{eq:FS},
\begin{equation*}
	K_{41}\lec\norm{\norm{\psi_{m}(\cdot,t)}_{L^{q'}(\R^{d-1})}}_{L^{p'}(\R;\ell^{q'}(m))}\cdot\norm{\pd_{x_d}^nv}_{L^{p}(\R;L^q\left(\R_+^d\right))}.
\end{equation*}
By a duality argument, it follows that
for $1<p<\infty$ and $1< q<\infty$, estimate \eqref{eq:trace2-1} holds.

We now estimate $K_{42}$. For $j\geq 2m$, arguing similarly as that for \eqref{eq:hg2}, we get
\begin{equation}\label{eq:hg2-2}
	\|\Delta_{m}^{h}\Delta_{j}^{\cT} g\|_{L^{q}(\R^{d-1})}\lec\int_{0}^{\infty} \!\!\int_{\R}\frac{ x_d^{\gamma}2^{(3+\gamma-n)\frac{j}{2}}e^{-cx_d 2^{\frac{j}{2}}}}{\left(1+|2^j (t-s)|^2\right)^2}\norm{\pd_{x_d}^nv(\cdot,x_d,s)}_{L^{q}(\R^{d-1})}\ds\dx_d.
\end{equation}
Hence, we obtain 
\begin{align*}
	K_{42}&\lec\sum_{m\in\mathbb{Z}}\int_{\R}\Big(\int_{0}^{\infty}\int_{\R}\sum_{j\geq 2m}
	\frac{2^{(n-\frac{1}{q})m+(3+\gamma-n)\frac{j}{2}} x_d^{\gamma}e^{-cx_d 2^{\frac{j}{2}}}}{\left(1+|2^j (t-s)|^2\right)^2}\cdot\norm{\pd_{x_d}^nv(\cdot,x_d,s)}_{L^{q}(\R^{d-1})}\ds\dx_d\Big) \notag\\[3pt]
	&\quad\quad \times\norm{\tilde{\Delta}_{j}^{\mathcal{T}}\psi_{m}(\cdot,t)}_{L^{q'}(\R^{d-1})}\dt \notag\\[3pt]
	&\lec\int_{\R}\int_{0}^{\infty}\sum_{m \in\mathbb{Z}}\sum_{j\geq 2m}\Big( 2^{(n-\frac{1}{q})m+(1+\gamma-n)\frac{j}{2}}x_d^{\gamma} e^{-cx_d 2^{\frac{j}{2}}}\cdot\Big(\sup_{s\in \R} \frac{\norm{\tilde{\Delta}_{j}^{\mathcal{T}}\psi_{m}(\cdot,t-s)}_{L^{q'}(\R^{d-1})}}{1+|2^js|^2}\Big)\Big)\notag\\[3pt]
	&\quad\quad
	\times\norm{\pd_{x_d}^nv(\cdot,x_d,t)}_{L^{q}(\R^{d-1})}\dx_d\dt.
\end{align*}
Bounding Peetre's maximal function by Hardy-Littlewood maximal function, and then applying \eqref{eq:gamma-q-n} with $1+\gamma-n\leq0$ (as $n>1/q$), we get 
\begin{align}
	K_{42}&\lec\int_{\R}\int_{0}^{\infty}\sum_{m \in\mathbb{Z}}\Big( 2^{(1+\gamma-\frac{1}{q}-\ep)m}x_d^{\gamma-\ep} e^{-\frac{c}{2}x_d 2^{m}}\cdot M^{\cT}M^{\cT}\norm{\psi_m(\cdot,t)}_{L^{q'}(\R^{d-1})}\Big)\notag\\[3pt]
	&\quad\quad
	\times\norm{\pd_{x_d}^nv(\cdot,x_d,t)}_{L^{q}(\R^{d-1})}\dx_d\dt.
	\label{eq:K42}
\end{align}
Then applying H\"older's inequality and then \eqref{eq:int-xd1} with $\ep=\frac{1}{2q'^2}$, we obtain the same bound as $K_{41}$ for $K_{42}$. The estimate \eqref{eq:trace2-2} follows in the same way.
\end{proof}

\begin{lem}\label{lem:trace-d2b}
Under the assumptions of Lemma \ref{lem:trace-d0b} with $1\leq p<\infty$ and $q=1$, estimates \eqref{eq:trace2-1} and \eqref{eq:trace2-2} hold.
\end{lem}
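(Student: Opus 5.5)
The plan follows Lemma \ref{lem:trace-d2}, with the roles of $k$ and $m$ exchanged as in Lemma \ref{lem:trace-d1b}. We take $\gamma=\lfloor 1/q\rfloor=1$ in \eqref{eq:lem:trace-d}. We then test against sequences $\{\psi_m\}$ with $\norm{\norm{\psi_m(\cdot,t)}_{L^{\infty}(\R^{d-1})}}_{L^{p'}(\R;\ell^\infty(m))}<\infty$. Since the dual of $L^p(\ell^1)$ is $L^{p'}(\ell^\infty)$ for $1\le p<\infty$, it suffices to bound $K_{41}$ and $K_{42}$ by this norm times $\norm{\pd_{x_d}^n v}_{L^p(\R;L^1(\R^d_+))}$.

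\textbf{Estimate of $K_{41}$.} The kernel bound \eqref{eq:hg1-1} and the computation leading to \eqref{eq:K41} are valid for $q=1$, $\gamma=1$. With these values the weight becomes $x_d 2^{m} e^{-cx_d2^m}$. We pull out $\sup_m$ of the Peetre maximal factor. The remaining sum $\sum_m x_d2^m e^{-cx_d2^m}$ is bounded uniformly in $x_d$ by \eqref{eq:lem-xd4} with $r=1$. This gives
\[
K_{41}\lec\int_{\R}\sup_m\sup_{s\in\R}\frac{\norm{\Delta^{\cT}_{<2m+2}\psi_m(\cdot,t-s)}_{L^\infty(\R^{d-1})}}{1+|2^{2m}s|^2}\,\norm{\pd_{x_d}^nv(\cdot,t)}_{L^1(\R^d_+)}\dt.
\]
By \eqref{ML1} the integrand's first factor is at most $\sup_m M^{\cT}M^{\cT}\norm{\psi_m(\cdot,t)}_{L^\infty}\le M^{\cT}M^{\cT}\sup_m\norm{\psi_m(\cdot,t)}_{L^\infty}$. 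For $p>1$ we apply \eqref{Maximal1} in $L^{p'}$ (equivalently \eqref{eq:FS} with $q'=\infty$, Remark \ref{rem2.3}). For $p=1$ we have $p'=\infty$, and $M$ is bounded on $L^\infty$, so this case is also covered. Duality then yields \eqref{eq:trace2-1}.

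\textbf{Estimate of $K_{42}$ (assuming $n>1$).} We start from \eqref{eq:hg2-2} with $\gamma=1$. Bounding the Peetre maximal functions by $M^{\cT}M^{\cT}\norm{\psi_m}_{L^\infty}$ and summing over $j\ge2m$ uses \eqref{eq:gamma-q-n}, which applies because $2-n\le0$. Taking $\ep=\frac12$ this reproduces \eqref{eq:K42} with weight $2^{m/2}x_d^{1/2}e^{-\frac c2 x_d2^m}=(x_d2^m)^{1/2}e^{-\frac c2x_d2^m}$. Its sum over $m$ is again bounded by \eqref{eq:lem-xd4} with $r=\frac12$. We then conclude exactly as for $K_{41}$, which gives \eqref{eq:trace2-2}.

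\textbf{Expected obstacle.} The main point is the choice of $\gamma$. It must make the $x_d$-weight summable uniformly: with $\gamma=0$ the weight would be $x_d^{-\ep}$ times a sum that diverges. At the same time \eqref{eq:gamma-q-n} needs $1+\gamma-n\le 0$, and this is exactly where $n>1/q=1$ enters.
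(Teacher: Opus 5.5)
Your proposal is correct and takes the same route as the paper. With $\gamma=1$, you bound $K_{41}$ by pulling $\sup_m$ of the maximal factor out of the uniformly bounded sum $\sum_m x_d2^m e^{-cx_d2^m}$, then apply \eqref{ML1} and the maximal inequality in $L^{p'}$ (including $p'=\infty$), and you handle $K_{42}$ via \eqref{eq:K42} with $\ep=\frac12$, exactly as the paper does; you also spell out the uniform summability via \eqref{eq:lem-xd4} (with $r=1$ and $r=\frac12$), which the paper leaves implicit by reference to Lemma \ref{lem:trace-d2}.
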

\begin{proof}
The proof is similar to Lemma \ref{lem:trace-d2}. By \eqref{eq:K41} with $q=1$ and \eqref{ML1}, we get
\begin{equation*}
	K_{41}\lec\int_{\R}\sup_{m}M^{\cT}M^{\cT}\norm{\psi_m(\cdot,t)}_{L^{\infty}(\R^{d-1})}\cdot\norm{\pd_{x_d}^nv(\cdot,t)}_{L^{1}(\R_+^{d})}\dt.  \label{eq:K31-q=1}
\end{equation*}
Hence by \eqref{eq:FS} and a duality argument, we obtain \eqref{eq:trace2-1}.

By \eqref{eq:K42} with  $\ep=\frac{1}{2}$, we obtain the same bound as $K_{41}$ for $K_{42}$. The estimate \eqref{eq:trace2-2} follows in the same way.
\end{proof}
\begin{lem}\label{lem:trace-d3b}
	Under the assumptions of Lemma \ref{lem:trace-d0b} with $1< p\leq\infty$ and $q=\infty$, estimates \eqref{eq:trace2-1} and \eqref{eq:trace2-2} hold.
\end{lem}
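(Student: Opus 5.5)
The plan is to mirror Lemma \ref{lem:trace-d3}, now with the roles of the horizontal and temporal frequencies exchanged. Since $q=\infty$ we take $\gamma=0$ in \eqref{eq:lem:trace-d}, and no duality argument is needed: we bound $\|\Delta_m^h\Delta_{<2m}^{\cT}g\|_{L^\infty(\R^{d-1})}$ pointwise in $t$ by a maximal function of $\norm{\pd_{x_d}^nv(\cdot,t)}_{L^\infty(\R^d_+)}$.

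For \eqref{eq:trace2-1}, apply $\Delta_m^h\Delta_{<2m}^{\cT}$ to \eqref{eq:lem:trace-d}. We insert the extra factor $\tilde\theta_0(2^{-2m}\tau)$, which equals $1$ on $\supp\theta_0(2^{-2m}\cdot)$, in front of $\pd_{x_d}^n\hat v$, so that the kernel acts on $\Delta^{\cT}_{<2m+2}\pd_{x_d}^nv$, a function with temporal frequency support in a ball of radius $\sim 2^{2m}$. The kernel bound behind \eqref{eq:hg1-1} (as in Lemma \ref{lem:exp-decay}) then gives
\begin{align*}
\|\Delta_m^h\Delta_{<2m}^{\cT}g\|_{L^\infty(\R^{d-1})}
&\lec\int_0^\infty\!\!\int_\R\frac{2^{(3-n)m}e^{-cx_d2^m}}{(1+|2^{2m}(t-s)|^2)^2}\norm{\Delta^{\cT}_{<2m+2}\pd_{x_d}^nv(\cdot,x_d,s)}_{L^\infty(\R^{d-1})}\ds\dx_d\\
&\lec 2^{-nm}\sup_{s\in\R}\frac{\norm{\Delta^{\cT}_{<2m+2}\pd_{x_d}^nv(\cdot,t-s)}_{L^\infty(\R^d_+)}}{1+|2^{2m}s|^2}.
\end{align*}
The second line uses \eqref{eq:lem-xd5} with $\sigma=0$ for the $x_d$-integral and the integrability of the $s$-kernel. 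By \eqref{eq:peetre} with $r=1$ and \eqref{eq:maximal-uniform}, the last quantity is at most $2^{-nm}M^{\cT}M^{\cT}\norm{\pd_{x_d}^nv(\cdot,t)}_{L^\infty(\R^d_+)}$. Taking $\sup_m$ and then the $L^p$ norm for $1<p\le\infty$, we conclude by \eqref{Maximal1}, or equivalently by \eqref{eq:FS} with $q=\infty$.

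For \eqref{eq:trace2-2}, write $\Delta_m^h\Delta^{\cT}_{\ge2m}g=\sum_{j\ge2m}\Delta_m^h\Delta_j^{\cT}g$. As in \eqref{eq:hg2-2}, but with $\tilde\theta(2^{-j}\tau)$ inserted, each term is bounded by
\[
\int_0^\infty\!\!\int_\R \frac{2^{(3-n)\frac j2}e^{-cx_d2^{j/2}}}{(1+|2^j(t-s)|^2)^2}\norm{\Delta_j^{\cT}\pd_{x_d}^nv(\cdot,x_d,s)}_{L^\infty(\R^{d-1})}\ds\dx_d.
\]
Each $j$ contributes at most $2^{-nj/2}M^{\cT}M^{\cT}\norm{\pd_{x_d}^nv(\cdot,t)}_{L^\infty(\R^d_+)}$. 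Summing over $j\ge2m$ uses $n>\frac1q=0$, so $n\ge1$ and the geometric series converges to $\lec 2^{-nm}$. This gives the same pointwise bound as before, and we finish as above.

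The step I expect to need the most care is the summation over $j$. Because the Peetre supremum depends on $j$, I would bound it by the $j$-independent quantity $M^{\cT}M^{\cT}\norm{\pd_{x_d}^nv}_{L^\infty}$ before summing, which is what \eqref{eq:maximal-uniform} allows. Alternatively, one can first sum the kernels using \eqref{eq:gamma-q-n} with $\ep=\frac12$, exactly as in Lemma \ref{lem:trace-d3}. Either route avoids any vector-valued maximal inequality at $q=\infty$.
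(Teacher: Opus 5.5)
Your proposal is correct and is essentially the paper's proof: you use the kernel bounds \eqref{eq:hg1-1} and \eqref{eq:hg2-2} with $\gamma=0$, the $x_d$-integral via \eqref{eq:lem-xd5}, and Peetre's inequality with $r=1$ together with \eqref{eq:maximal-uniform} to reach the $j$-uniform bound $M^{\cT}M^{\cT}\norm{\pd_{x_d}^nv(\cdot,t)}_{L^\infty(\R^d_+)}$, then the geometric sum $\sum_{j\ge 2m}2^{-nj/2}\lec 2^{-nm}$ (using $n\ge1$), and finally \eqref{Maximal1}. The only slip is cosmetic: inserting $\tilde\theta_0(2^{-2m}\tau)$ gives $\Delta^{\cT}_{<2m+1}$ rather than $\Delta^{\cT}_{<2m+2}$, which does not matter since the argument only uses temporal frequency support of size $\lec 2^{2m}$.
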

\begin{proof}
The proof is similar to Lemma \ref{lem:trace-d3}. Analogous to \eqref{eq:hg1-1} and \eqref{4.24}, it is easy to verify that
\begin{align*}
	&\quad \ \|\Delta_{m}^h\Delta_{<2m}^{\cT} g\|_{L^{\infty}(\R^{d-1})}\\[3pt]
	&\lec\int_{0}^{\infty}\int_{\R} \frac{ 2^{(3-n)m} e^{-cx_d 2^{m}}}{\left(1+|2^{2m} (t-s)|^2\right)^2}\norm{\Delta_{<2m}^{\mathcal{T}}\pd_{x_d}^nv(\cdot,x_d,s)}_{L^{\infty}(\R^{d-1})}\ds\dx_d\notag \\[3pt]
	&\lec\int_{0}^{\infty} 2^{(1-n)m} e^{-cx_d 2^{m}}\cdot\sup_{s\in \R} \frac{\norm{\Delta_{<2m}^{\mathcal{T}}\pd_{x_d}^nv(\cdot,x_d,t-s)}_{L^{\infty}(\R^{d-1})}}{1+|2^{2m}s|^2}\dx_d\notag \\[3pt]
	&\lec2^{-nm}\sup_{s\in \R} \frac{\norm{\Delta_{<2m}^{\mathcal{T}}\pd_{x_d}^nv(\cdot,t-s)}_{L^{\infty}(\R_+^{d})}}{1+|2^{2m}s|^2}\notag\\[3pt]
	&\lec2^{-nm} \cdot M^{\cT}M^{\cT} \norm{\pd_{x_d}^nv(\cdot,t)}_{L^{\infty}(\R_+^{d})}, 
\end{align*}
which together with \eqref{Maximal1} yields \eqref{eq:trace2-1}.

By \eqref{eq:hg2-2}, \eqref{eq:lem-xd5}, \eqref{eq:peetre} with $r=1$ and \eqref{eq:maximal-uniform}, we achieve
\begin{align*}
	 &\quad\ \sum_{j\geq2m}\|\Delta_{m}^h\Delta_{j}^{\cT} g\|_{L^{\infty}(\R^{d-1})}\\[3pt]
	&\lec\sum_{j\geq2m}\int_{0}^{\infty}\int_{\R} \frac{  2^{(3-n)\frac{j}{2}}e^{-cx_d 2^{\frac{j}{2}}}}{\left(1+|2^j (t-s)|^2\right)^2}\norm{\Delta_{j}^{\mathcal{T}}\pd_{x_d}^nv(\cdot,x_d,s)}_{L^{\infty}(\R^{d-1})}\ds\dx_d \\[3pt]
	&\lec\sum_{j\geq2m}\int_{0}^{\infty} 2^{(1-n)\frac{j}{2}} e^{-cx_d 2^{\frac{j}{2}}}\cdot\sup_{s\in \R} \frac{\norm{\Delta_{j}^{\mathcal{T}}\pd_{x_d}^nv(\cdot,x_d,t-s)}_{L^{\infty}(\R^{d-1})}}{1+|2^js|^2}\dx_d \notag \\[3pt]
	&\lec\sum_{j\geq2m} 2^{-n\frac{j}{2}} \cdot\sup_{s\in \R} \frac{\norm{\Delta_{j}^{\mathcal{T}}\pd_{x_d}^nv(\cdot,t-s)}_{L^{\infty}(\R_+^{d})}}{1+|2^js|^2} \notag \\[3pt]
	&\lec2^{-nm} \cdot M^{\cT}M^{\cT} \norm{\pd_{x_d}^nv(\cdot,t)}_{L^{\infty}(\R_+^{d})}.
\end{align*}
which together with \eqref{Maximal1} yields \eqref{eq:trace2-2}.
\end{proof}

\begin{proof}[Proof of Theorem \ref{thm:dd-trace}]

The first term on the left-hand side of \eqref{thm:dd-trace-eq1} is bounded by its right-hand side by summing
the two estimates in Lemma \ref{lem:trace-d0}.
The second term on the left-hand side of \eqref{thm:dd-trace-eq1} is treated similarly using Lemma \ref{lem:trace-d0b}.
\end{proof}

\section*{Acknowledgments}
We warmly thank Professor Senjo Shimizu and Ting Zhang for fruitful discussions. TT also thanks colleagues at Kyoto University for their hospitality during his visit in June 2025.

Chen was supported in part by National Natural Science Foundation of China under grant [12101556]. 
The research of both Liang and Tsai was partially supported by Natural Sciences and Engineering Research Council of Canada (NSERC) under grant RGPIN-2023-04534.

\bibliography{MaxReg}
\bibliographystyle{abbrv}

\end{document}